\newcommand{\grigetto}{Periwinkle}
\newcommand{\R}{\mathbb{R}}
\newcommand{\eps}{\varepsilon}
\newcommand{\la}{\lambda}
\newcommand{\into}{\int_{\Omega}}
\newcommand{\Om}{\Omega}
\DeclareMathOperator{\cp}{cap}
\newcommand{\Ccal}{{\mathcal{C}}}
\newcommand{\Hcal}{{\mathcal{H}}}
\def\XXint#1#2#3{{\setbox0=\hbox{$#1{#2#3}{\int}$ }
\vcenter{\hbox{$#2#3$ }}\kern-.6\wd0}}
\newcommand{\ind}[1]{\mathds{1}_{#1}}
\DeclareMathOperator{\diverg}{div}
\DeclareMathOperator{\supp}{supp}
\DeclareMathOperator{\trace}{trace}
\newtheorem{proposition}{Proposition}[section]
\newtheorem{theorem}[proposition]{Theorem}
\newtheorem{corollary}[proposition]{Corollary}
\newtheorem{lemma}[proposition]{Lemma}
\theoremstyle{definition}
\newtheorem{definition}[proposition]{Definition}
\newtheorem{remark}[proposition]{Remark}
\newcommand{\beq}{\begin{equation}}
\newcommand{\eeq}{\end{equation}}
\newcommand{\ben}{\begin{enumerate}}
\newcommand{\een}{\end{enumerate}}
\newcommand{\bit}{\begin{itemize}}
\newcommand{\eit}{\end{itemize}}
\newcommand{\dys}{\displaystyle}
\DeclareMathOperator{\sd}{M}
\DeclareMathOperator{\od}{\Lambda}
\begin{document}
\begin{frontmatter}

\title{Asymptotic spherical shapes in some spectral optimization problems\tnoteref{t1}}
\tnotetext[t1]{We warmly thank Bozhidar Velichkov for the very useful discussions.
Work partially supported by the project ERC Advanced Grant 2013 n. 339958:
``Complex Patterns for Strongly Interacting Dynamical Systems - COMPAT'', by the PRIN-2015KB9WPT Grant:
``Variational methods, with applications to problems in mathematical physics and geometry'', and by
the INdAM-GNAMPA group. }

%\author{Dario Mazzoleni, Benedetta Pellacci and Gianmaria Verzini}

\author{Dario Mazzoleni\fnref{fn1}}
\fntext[fn1]{Dipartimento di Matematica e Fisica ``N. Tartaglia''
Universit\`a Cattolica del Sacro Cuore, via Trieste 17,
25121 Brescia, Italy}
\ead{dariocesare.mazzoleni@unicatt.it}
\author{Benedetta Pellacci\fnref{fn2}}
\fntext[fn2]{Dipartimento di Matematica e Fisica
Universit\`a della Campania  ``Luigi Vanvitelli'',  via A.Lincoln 5,
Caserta, Italy.}
\ead{benedetta.pellacci@unicampania.it}
\author{Gianmaria Verzini\fnref{fn3}\corref{cor}}
\fntext[fn3]{Dipartimento di Matematica, Politecnico di Milano, p.za Leonardo da Vinci 32,  20133 Milano, Italy}
\cortext[cor]{Corresponding author.}
\ead{gianmaria.verzini@polimi.it}
%
%
%\thanks{Research  partially
%supported by  MIUR-PRIN
%by the ERC Advanced Grant  2013 n. 339958:
%``Complex Patterns for Strongly Interacting Dynamical Systems - COMPAT'';
%``Gruppo Nazionale per l'Analisi Matematica, la Probabilit\`a e le loro
%Applicazioni'' (GNAMPA) of the Istituto Nazionale di Alta Matematica
%(INdAM).}
%
%
\begin{keyword}
Singular limits \sep survival threshold \sep mixed Neumann-Dirichlet boundary conditions \sep $\alpha$-symmetrization \sep isoperimetric profile\sep concentration phenomena.

\MSC[2010]{ 49R05, 49Q10; 92D25, 35P15, 47A75.}
\end{keyword}

%\subjclass[2010]{Primary: 49R05, 49Q10; Secondary: 92D25, 35P15, 47A75.}
%\keywords{Spectral problems, shape optimization, asymptotic analysis}
%
%

%%
%\noindent
%{\footnotesize \textbf{AMS-Subject Classification}}. 
%{\footnotesize 49R05, 49Q10; 92D25, 35P15, 47A75.}\\
%{\footnotesize \textbf{Keywords}}. 
%{\footnotesize Singular limits, survival threshold, mixed Neumann-Dirichlet boundary conditions, $\alpha$-symmetrization, isoperimetric profile, concentration phenomena.}

%\maketitle

\begin{abstract}
We study the optimization of the positive principal eigenvalue of an 
indefinite weighted problem,
associated with the Neumann Laplacian in a box $\Om\subset\R^N$, which arises in
the investigation of the survival threshold in population dynamics.
When trying to minimize such eigenvalue with respect to the weight, one is led to
consider a shape optimization problem, which is known to admit no spherical optimal
shapes (despite some previously stated conjectures).
We investigate whether spherical shapes can be recovered in some singular perturbation
limit. More precisely we show that, whenever the negative part of the weight diverges,  the above shape optimization problem approaches in the limit
the so called spectral drop problem, which involves the minimization of the first
eigenvalue of the mixed Dirichlet-Neumann Laplacian. Using $\alpha$-symmetrization 
techniques on cones, we prove that, for suitable choices
of the box $\Om$, the optimal shapes for this second problem are indeed spherical.
Moreover, for general $\Omega$, we show that small volume spectral drops are
asymptotically spherical, centered near points of $\partial\Omega$ having largest mean curvature.
%

%\vspace{5pt}
%

\noindent\textbf{Résumé}\\

\noindent On etudie l'optimisation de la valeur propre principale positive d'un
probl\`eme avec un poids ind\'efini,
associ\'e au Laplacien avec conditions au bord de Neumann dans une bo\^ite $ \Om \subset \R^N $, qui appara\^it dans
l'\'etude naturellement comme valeur limite de survie en dynamique des populations.
En essayant de minimiser une telle valeur propre par rapport au poids, on est amen\'e \`a
consid\'erer un probl\`eme d'optimisation de forme, qui n'admet pas de formes sph\'eriques comme solutions (chose qui contredit certaines conjectures \'enonc\'ees pr\'ec\'edemment).
Nous \'etudions si des formes sph\'eriques peuvent \^etre recouvr\'ees comme solutions de certaines perturbations singuli\`eres du probl\`eme. Notamment, on d\'emontre que, si la partie n\'egative du poids diverge, le probl\`eme d'optimisation de forme se rapproche \`a la limite \`a un probl\`eme de ``goutte spectrale'', cette \`a dire un probl\`eme de minimisation de la premi\`ere
valeur propre du Laplacien avec conditions mixtes Dirichlet-et-Neumann au bord. En utilisant techniques de $\alpha$-sym\'etrisation
 sur les c\^ones, on d\'emontre que, pour des choix opportunes
de la bo\^ite $ \Om $, les formes optimales pour ce second probl\`eme sont bien sph\'eriques.
De plus, pour $ \Omega $ g\'en\'erique, on d\'emontre que les gouttes spectrales  s'approchent asymptotiquement -lorsque leur volume devient infinit\'esimal- \`a des gouttes sph\'eriques, centr\'ees près des points de $ \partial \Omega $ ayant courbure moyenne maximale.
\end{abstract}

\end{frontmatter}

\section{Introduction}\label{sec:problem}
In this paper we are concerned with two spectral shape optimization problems, both settled in a box, that is, a bounded domain (open and connected set) of $\R^N$, 
$N\geq 2$, with Lipschitz boundary, denoted by $\Om$.

The first problem we consider is an optimal design problem related to the \emph{survival threshold} in population dynamics \cite{MR1014659,ly}. Here, the cost
is the positive principal eigenvalue of the weighted Neumann Laplacian. More precisely, for a sign-changing weight $m \in
L^\infty(\Omega)$ we consider the eigenvalue problem
\begin{equation}\label{eq:eigprobOD}
\begin{cases}
 -\Delta u = \lambda m u & \text{in }\Omega\\
 \partial_\nu u = 0 & \text{on }\partial\Omega.
\end{cases}
\end{equation}
A principal eigenvalue for \eqref{eq:eigprobOD} is a number $\lambda$
having a positive eigenfunction. It is well known that, in case $m^+$ and $m^-$ are
both nontrivial, \eqref{eq:eigprobOD} admits two principal eigenvalues, $0$ and
$\lambda(m)$. Moreover, $\lambda(m)>0$ if and only if $\int_\Omega m <0$, in which case
\begin{equation}\label{eq:def_lambda(m)}
\lambda(m):= \min \left\{\dfrac{\int_\Omega |\nabla u|^2\,dx}{\int_\Omega m u^2\,dx} :  u\in H^1(\Omega),\ \int_\Omega m u^2\,dx>0\right\}.
\end{equation}
Problem \eqref{eq:eigprobOD} is  the 
stationary linearized equation associated with classical 
reaction-diffusion models for the dynamic of a population, of density $u$, 
inhabiting a heterogenous environment (see \cite{fisher, kpp, MR2191264}). 
In this context, $m(x)$
describes the intrinsic growth rate of the population at $x$ (positive in favorable
sites, negative in hostile ones), and $\lambda(m)$ is related to the survival chances
of the population: a smaller value of $\lambda(m)$ provides better chances of species
survival. For this reason, the problem of minimizing $\lambda(m)$, with $m$ varying in
some suitable class, has been widely considered in the literature: we postpone a
detailed discussion of the state of the art for such problem to Section
\ref{sec:opt_thresh} ahead, while here we just describe some results which motivate our
study.

When the mean $\int_\Omega m$ is fixed, as well as lower and upper bounds $-\underline{m}
\le m \le \overline{m}$, it is known \cite{ly} that the infimum of $\lambda(m)$ is achieved by a
bang-bang (i.e. piecewise constant) optimal weight $m^* = \overline{m}\ind{D^*} - \underline{m}\ind{\Omega\setminus D^*}$,
where the measurable set $D^*$ can be chosen to be open. For this reason one can
equivalently consider the minimization over the class of bang-bang weights $\overline{m}\ind{D} -
\underline{m}\ind{\Omega\setminus D}$, under a volume constraint on $D$ in order to fix
the average of $m$. Finally, up to a scaling, we can choose $\overline{m} = 1$ and obtain the first shape optimization problem that we consider.
\begin{definition}
Let $\beta>0$ and $0<\delta<\dfrac{\beta|\Omega|}{\beta+1}$. For any $D\subset\Omega$ such that $|D| = \delta$
we define, with some abuse of notation, the eigenvalue of the corresponding bang-bang weight as
\begin{equation}\label{eq:def_lambda_beta_D}
\lambda(\beta,D):=\lambda(\ind{D} - \beta\ind{\Omega\setminus D})= \min \left\{
\dfrac{\int_\Omega |\nabla u|^2\,dx}{\int_D u^2\,dx - \beta \int_{\Omega\setminus D} u^2\,dx} :  u\in H^1(\Omega),\ \int_D u^2\,dx>\beta \int_{\Omega\setminus D} u^2\,dx\right\},
\end{equation}
and the optimal design problem for the survival threshold as
\begin{equation}\label{eq:def_od}
\od(\beta,\delta)=\min\Big\{\lambda(\beta,D):D\subset \Om,\mbox{ measurable, }|D|=\delta\Big\}.
\end{equation}
\end{definition}
As we mentioned, any minimizer $D^*$ achieving $\od(\beta,\delta)$ is open, up  a negligible set: actually, it is
a superlevel of a corresponding eigenfunction of \eqref{eq:eigprobOD}. Since $D^*$ represents the favorable patch
of the habitat which optimizes the survival chances, natural questions arise about its shape and its location inside
$\Omega$. In the case of Dirichlet boundary conditions, Cantrell and Cosner \cite{MR1014659} pointed out that if $\Omega$
is a ball, then $D^*$ is a ball too, concentric with $\Omega$. On the other hand, in the case of Neumann boundary conditions and spatial dimension $N=1$, it is known
\cite{MR1105497,ly,llnp} that any $D^*$ is a connected interval which touches the boundary of $\Omega$. Based on these results, as well as on numerical simulations,
a commonly stated conjecture was that the ball, or the intersection of a ball with $\Omega$, achieves $\od(\beta,\delta)$, at least
for some choice of the parameters or of the box \cite{MR2214420,MR2494032,haro}. In particular, in case $\Omega$ is a
rectangle and $\delta$ is not too large, it was conjectured that $D^*$ would be a quarter of a disk centered at a vertex of
$\Omega$. Notice that the case of rectangular boxes is not only
interesting as a prototypical example, but also because its study is equivalent to that
of a periodically fragmented environment. For easier terminology, in the following we
say that a shape $D^*$ is \emph{spherical} if $D^*=\Om\cap B_{r(\delta)}(x_0)$, for a
suitable $x_0$, and $r(\delta)$ is such that $|D^*|=\delta$.

Rather surprisingly, all these conjectures about optimal spherical shapes were
recently disproved by Lamboley et al.
in \cite{llnp}: if $D^*$ is a minimizer in any $N$-dimensional rectangle, for any
choice of $\beta$ and $\delta$, then $\partial D^*$ can not contain
any portion of sphere. One ingredient of their proof is a generalization of ideas by Henrot and Oudet \cite{ho}; indeed, arguing as in \cite{ho}, one realizes that the main obstruction to the
presence of spherical shapes for $\od(\beta,\delta)$ is provided by the part of $\partial\Omega$ which lays faraway from
$D^*$. The main aim of this paper is to show that, in some singular perturbation regimes, the influence of such part
of $\partial\Omega$ becomes negligible, and thus optimal spherical shapes can be obtained in the asymptotic limit.

In order to pursue this goal, there are two possible choices: one can either consider very small favorable regions,
letting $\delta\to0$, or very hostile unfavorable ones, in case $\beta\to+\infty$. To start with, we focus on this second
possibility, taking advantage of the following result (see also~\cite{derek}).
\begin{lemma}\label{lem:intro_beta2infty}
Let $\Om\subset\R^N$ be a bounded Lipschitz domain. For any open, Lipschitz $D\subset\Omega$, $0<|D|<|\Om|$, we have
\[
\lim_{\beta\to+\infty} \lambda(\beta,D) = \mu(D,\Omega),
\]
where $\mu(D,\Om)$ is the first eigenvalue of the mixed Dirichlet-Neumann problem
\begin{equation}\label{eq:limprob}
\begin{cases}
-\Delta u = \mu(D,\Om) u &\text{in } D \\
u=0 &\text{on } \partial D\cap \Omega\\
\partial_\nu u=0 &\text{on } \partial D\cap \partial \Omega.
\end{cases}
\end{equation}
\end{lemma}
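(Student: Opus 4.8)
The plan is to prove the two inequalities $\limsup_{\beta\to+\infty}\lambda(\beta,D)\le\mu(D,\Om)$ and $\liminf_{\beta\to+\infty}\lambda(\beta,D)\ge\mu(D,\Om)$ by comparing directly the Rayleigh quotient in \eqref{eq:def_lambda_beta_D} with the variational characterization $\mu(D,\Om)=\min\{\int_D|\nabla u|^2\,dx/\int_D u^2\,dx:u\in H^1(D)\setminus\{0\},\ u=0\text{ on }\partial D\cap\Om\}$. The two structural facts about Lipschitz domains on which the whole argument rests are: \textbf{(i)} if $u\in H^1(D)$ has zero trace on the relatively open set $\partial D\cap\Om$, then the extension $\tilde u$ of $u$ by $0$ to the whole of $\Om$ belongs to $H^1(\Om)$, with $\nabla\tilde u=\nabla u$ a.e.\ on $D$ and $\nabla\tilde u=0$ a.e.\ on $\Om\setminus D$; and \textbf{(ii)} conversely, if $w\in H^1(\Om)$ vanishes a.e.\ on $\Om\setminus D$, then $w|_D\in H^1(D)$ has zero trace on $\partial D\cap\Om$. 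Both are classical once one knows that, for $\Om$ and $D$ Lipschitz, $\partial D$ splits up to an $\mathcal{H}^{N-1}$-null set into the ``Dirichlet part'' $\partial D\cap\Om$ and the ``Neumann part'' $\partial D\cap\partial\Om$, with $D$ locally one-sided along the former.

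For the upper bound, fix any admissible competitor $u$ for $\mu(D,\Om)$ and let $\tilde u\in H^1(\Om)$ be its zero-extension given by (i). Then $\int_D\tilde u^2\,dx-\beta\int_{\Om\setminus D}\tilde u^2\,dx=\int_D u^2\,dx>0$ for every $\beta>0$, so $\tilde u$ is admissible in \eqref{eq:def_lambda_beta_D} and $\lambda(\beta,D)\le\int_\Om|\nabla\tilde u|^2\,dx/\int_D u^2\,dx=\int_D|\nabla u|^2\,dx/\int_D u^2\,dx$. Taking the infimum over $u$ yields $\lambda(\beta,D)\le\mu(D,\Om)$ \emph{for every} $\beta>0$, hence in particular $\limsup_{\beta\to+\infty}\lambda(\beta,D)\le\mu(D,\Om)$.

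For the lower bound, set $\ell:=\liminf_{\beta\to+\infty}\lambda(\beta,D)$, pick $\beta_n\to+\infty$ with $\lambda(\beta_n,D)\to\ell$, and let $u_n\in H^1(\Om)$ realize the minimum in \eqref{eq:def_lambda_beta_D} defining $\lambda(\beta_n,D)$, normalized so that $\int_D u_n^2\,dx=1$ (possible since the constraint forces $\int_D u_n^2\,dx>0$). The admissibility constraint then reads $\int_{\Om\setminus D}u_n^2\,dx<1/\beta_n$, while the denominator $1-\beta_n\int_{\Om\setminus D}u_n^2\,dx$ lies in $(0,1]$, so that $\int_\Om|\nabla u_n|^2\,dx\le\lambda(\beta_n,D)\le\mu(D,\Om)$; moreover $\|u_n\|_{L^2(\Om)}^2\le 1+1/\beta_n$. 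Thus $(u_n)$ is bounded in $H^1(\Om)$ and, up to a subsequence, $u_n\rightharpoonup u$ in $H^1(\Om)$ and $u_n\to u$ in $L^2(\Om)$ by the Rellich theorem (here the boundedness and Lipschitz regularity of $\Om$ enter). Passing to the limit: $\int_D u^2\,dx=1$, so $u\not\equiv0$; $\int_{\Om\setminus D}u^2\,dx=\lim_n\int_{\Om\setminus D}u_n^2\,dx=0$, so $u=0$ a.e.\ on $\Om\setminus D$ and therefore $\nabla u=0$ a.e.\ there; and by (ii), $u|_D\in H^1(D)$ has zero trace on $\partial D\cap\Om$. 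By weak lower semicontinuity of the Dirichlet integral, $\int_D|\nabla u|^2\,dx=\int_\Om|\nabla u|^2\,dx\le\liminf_n\int_\Om|\nabla u_n|^2\,dx\le\liminf_n\lambda(\beta_n,D)=\ell$. Since $u|_D$ is admissible for $\mu(D,\Om)$ with $\int_D u^2\,dx=1$, this gives $\mu(D,\Om)\le\int_D|\nabla u|^2\,dx\le\ell$. Combining with the upper bound forces $\limsup_{\beta\to+\infty}\lambda(\beta,D)=\liminf_{\beta\to+\infty}\lambda(\beta,D)=\mu(D,\Om)$, which is the claim.

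The only genuinely delicate points are facts (i) and (ii): they are exactly where the Lipschitz hypotheses on both $\Om$ and $D$ are used, and one has to check that the interface $\partial D\cap\Om$ is ``clean'' enough for the zero-extension and zero-trace statements to hold. Everything else is a routine compactness-and-semicontinuity argument; the one point worth emphasizing is the choice of normalization $\int_D u_n^2\,dx=1$ (rather than, say, $\|u_n\|_{L^2(\Om)}=1$), which is what delivers the uniform $H^1(\Om)$ bound in spite of the diverging penalization $\beta_n$.
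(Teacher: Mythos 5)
Your proof is correct and follows the same basic strategy as the paper: bound $\lambda(\beta,D)$ from above by testing with a mixed eigenfunction extended by zero, then extract a weak $H^1(\Om)$ limit of the normalized eigenfunctions and show it lands in the admissible class for $\mu(D,\Om)$. The genuine difference is in the machinery. The paper proves the more general Lemma~\ref{le:lim} for arbitrary quasi-open $D$, which requires distinguishing $H^1_0(D,\Om)$ (vanishing q.e.) from $\widetilde H^1_0(D,\Om)$ (vanishing a.e.) and introducing the set $\omega_D$, and then deduces Lemma~\ref{lem:intro_beta2infty} by noting that for open Lipschitz $D$ these two spaces coincide. You bypass the quasi-open framework entirely by working with trace and zero-extension on Lipschitz domains --- your facts (i) and (ii), which are legitimate precisely because $D$ is Lipschitz, and this is exactly the obstruction the paper flags when it stresses that for general quasi-open $D$ the eigenfunctions vanish only a.e., not q.e., outside $D$. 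A second, smaller difference: the paper passes to the limit in the eigenvalue equation and establishes strong $H^1$ convergence before comparing Rayleigh quotients, whereas you use weak lower semicontinuity of the Dirichlet integral directly, which is shorter and sufficient here. Both routes are sound; yours is more elementary and self-contained at the level of this lemma, while the paper's heavier apparatus is what it actually needs for Theorem~\ref{thm:equality} (where $D$ can be merely quasi-open) and so is proved once for all.
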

The above lemma suggests that minimizers of the optimal design problem
$\od(\beta,\delta)$ should be related, for $\beta$ large, to minimizers
of the mixed Dirichlet-Neumann eigenvalue problem, among subdomains of measure
$\delta$. This leads to the second shape optimization problem that we consider,
i.e. the \emph{spectral drop} problem,
which was introduced and studied by Buttazzo and Velichkov in \cite{buve}.
Lipschitz subdomains are not enough, in order to settle this problem, and one is led 
to consider \emph{quasi-open} subsets of $\Omega$: $D$ is quasi-open
if it is open, up to sets of arbitrarily small capacity (see Section 
\ref{sec:spectraldrop} for details about capacity and quasi-open sets,
and Section \ref{sec:beta2infty} for a generalization of Lemma \ref{lem:intro_beta2infty} to quasi-open $D$).
\begin{definition}
Let $0<\delta<|\Om|$. For any quasi-open $D\subset\Omega$ such that $|D| = \delta$ we define the mixed Dirichlet-Neumann eigenvalue as
\begin{equation}\label{eq:def_mu_D}
\mu(D,\Om):=\min{\left\{\frac{\int_{\Om}|\nabla u|^2\,dx}{\int_\Om u^2\,dx}:u\in H^1_0(D,\Om)\setminus\{0\}\right\}},
\end{equation}
where
\[
H^1_0(D,\Om):=\left\{u\in H^1(\Om):u=0\text{ q.e. on }\Om\setminus 
D\right\},
\]
and $q.e.$ stands for \emph{quasi-everywhere}, which means up to sets of zero 
capacity.  Then, the \emph{spectral drop problem} is
\begin{equation}\label{eq:def_sd}
\sd(\delta)=\min{\Big\{\mu(D,\Omega):D\subset \Om,\;\mbox{quasi-open, }|D|=\delta\Big\}}.
\end{equation}
\end{definition}
It is known by~\cite{buve} that $\sd(\delta)$ is achieved, in the class of quasi-open
sets. More informations on this mixed boundary conditions problem are detailed in
Section~\ref{sec:mixed}.
%where we also show that optimizers are indeed open (this
%is done in Theorem \ref{thm:ulip} ahead, taking advantage of techniques well
%established  for the case of Dirichlet boundary conditions on $\partial \Om$, which was treated in~\cite{bhp,bmpv}).

Our first main result concerns the connection between the two optimal partition problems.
\begin{theorem}\label{thm:equality}
Let $\Om\subset\R^N$ be a bounded Lipschitz domain, $0<\delta<|\Om|$, $\beta>\frac{\delta}{|\Om|-\delta}$ and $\eps\in \left(\frac{\delta}{\beta}, |\Omega|-\delta\right) $. 
Then, we have
\[
\sd(\delta+\eps)\left(1-\sqrt{\frac{\delta}{\eps\beta}}\right)^{2}\leq \od(\beta,\delta)\leq \sd(\delta)
\]
As a consequence, for every $0<\delta<|\Om|$, 
\[
\lim_{\beta\to+\infty} \od(\beta,\delta) = \sd(\delta),
\]
i.e.
\[
\lim_{\beta\to+\infty}\min_{|D|=\delta}\lambda(\beta,D) = \min_{|D|=\delta} \lim_{\beta\to+\infty}\lambda(\beta,D).
\]
\end{theorem}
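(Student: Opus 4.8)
\emph{Strategy.} I would first establish the two-sided estimate and then obtain the limit by optimizing over $\beta$ and $\eps$. The upper bound $\od(\beta,\delta)\le\sd(\delta)$ is the soft one: since every quasi-open set is Lebesgue measurable, an optimal set $D^*$ for $\sd(\delta)$ (it exists by~\cite{buve}, and the assumption $\beta>\delta/(|\Om|-\delta)$ makes $\delta$ admissible in~\eqref{eq:def_od}) is also admissible for $\od(\beta,\delta)$. Plugging a minimizer $u\in H^1_0(D^*,\Om)\setminus\{0\}$ of~\eqref{eq:def_mu_D} into the Rayleigh quotient of~\eqref{eq:def_lambda_beta_D}, one has $u=0$ q.e.\ and hence a.e.\ on $\Om\setminus D^*$, so the weighted denominator reduces to $\int_{D^*}u^2=\int_\Om u^2$ and the quotient equals $\mu(D^*,\Om)=\sd(\delta)$; thus $\od(\beta,\delta)\le\lambda(\beta,D^*)\le\sd(\delta)$.

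\emph{The lower bound: set-up.} Fix a measurable $D\subset\Om$ with $|D|=\delta$ and a competitor $u\in H^1(\Om)$, $u\ge 0$ (replacing $u$ by $|u|$ is harmless); write $a:=\int_D u^2$ and $b:=\int_{\Om\setminus D}u^2\ge 0$, with $a>\beta b$. The idea is that $u$ carries some ``bad'' mass $b$ on $\Om\setminus D$, and I want to strip it off by truncation while enlarging the favourable region by at most $\eps$. So set
\[
t:=\sqrt{b/\eps},\qquad u_t:=(u-t)^+,\qquad A_t:=\{u>t\}
\]
(using the quasi-continuous representative of $u$). By Chebyshev, $t^2|A_t\setminus D|\le\int_{\Om\setminus D}u^2=b$, so $|A_t\setminus D|\le\eps$ and $|A_t|\le\delta+\eps<|\Om|$ (here $\eps<|\Om|-\delta$ is used); moreover $A_t$ is quasi-open, $u_t\in H^1_0(A_t,\Om)$, and $u_t\not\equiv 0$ — otherwise $u\le t$ a.e., forcing $a\le t^2\delta=b\delta/\eps<\beta b$ by $\eps>\delta/\beta$, a contradiction. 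Using $\int_\Om|\nabla u_t|^2\le\int_\Om|\nabla u|^2$, the minimality in~\eqref{eq:def_mu_D}, and the (elementary) monotonicity of $\sd$ — enlarging an admissible set cannot raise $\mu(\cdot,\Om)$, hence $\delta\mapsto\sd(\delta)$ is non-increasing — one gets
\[
\sd(\delta+\eps)\le\sd(|A_t|)\le\mu(A_t,\Om)\le\frac{\int_\Om|\nabla u_t|^2}{\int_\Om u_t^2}\le\frac{\int_\Om|\nabla u|^2}{\int_\Om u_t^2}.
\]
So everything reduces to the lower bound $\int_\Om u_t^2\ge\bigl(1-\sqrt{\delta/(\eps\beta)}\bigr)^2(a-\beta b)$.

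\emph{The $L^2$-estimate.} Since $u_t=u-\min(u,t)$ pointwise and $0\le\min(u,t)\le t$, the triangle inequality in $L^2(D)$ yields
\[
\Bigl(\int_\Om u_t^2\Bigr)^{1/2}\ge\Bigl(\int_D u_t^2\Bigr)^{1/2}\ge\Bigl(\int_D u^2\Bigr)^{1/2}-t|D|^{1/2}=\sqrt a-t\sqrt\delta,
\]
and this last quantity is nonnegative because $t^2=b/\eps<a/\delta$ (again from $\eps>\delta/\beta$ together with $a>\beta b$). Squaring and factoring out $a$,
\[
\int_\Om u_t^2\ge a\Bigl(1-\sqrt{\tfrac{b\delta}{\eps a}}\Bigr)^2.
\]
Finally $a>\beta b$ gives $b/a<1/\beta$, hence $\sqrt{b\delta/(\eps a)}\le\sqrt{\delta/(\eps\beta)}<1$, and together with $a\ge a-\beta b>0$ we conclude $\int_\Om u_t^2\ge(a-\beta b)\bigl(1-\sqrt{\delta/(\eps\beta)}\bigr)^2$. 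Feeding this back, $\lambda(\beta,D)\ge\sd(\delta+\eps)\bigl(1-\sqrt{\delta/(\eps\beta)}\bigr)^2$ for every admissible $D$, which is the claimed lower bound on $\od(\beta,\delta)$.

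\emph{The limit, and the main difficulty.} Taking $\liminf$ and $\limsup$ as $\beta\to+\infty$ in $\sd(\delta+\eps)(1-\sqrt{\delta/(\eps\beta)})^2\le\od(\beta,\delta)\le\sd(\delta)$ gives $\sd(\delta+\eps)\le\liminf_\beta\od(\beta,\delta)\le\limsup_\beta\od(\beta,\delta)\le\sd(\delta)$ for every $\eps\in(0,|\Om|-\delta)$; letting $\eps\to0^+$ and using the continuity of $\delta\mapsto\sd(\delta)$ (established for the spectral drop, see Section~\ref{sec:mixed}) closes the squeeze, yielding $\lim_\beta\od(\beta,\delta)=\sd(\delta)$. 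The routine parts are the upper bound and this last passage to the limit; the real work is the quantitative lower bound, and its crux is choosing the truncation level $t$ so that it is simultaneously large enough to force $|A_t|\le\delta+\eps$ and small enough to keep the surviving mass on $D$ comparable to $a-\beta b$. The compatibility of these two demands is precisely the hypothesis $\eps>\delta/\beta$ (it produces $b/\eps<a/\delta$), and recovering the sharp factor $(1-\sqrt{\delta/(\eps\beta)})^2$ — rather than a lossier constant — hinges on doing the truncation estimate in $L^2(D)$ and on invoking $a>\beta b$ only at the very last step.
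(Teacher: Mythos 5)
Your two-sided estimate is correct, and the upper bound is proved the same way the paper does it: take a minimizer $D^*$ for $\sd(\delta)$ and its eigenfunction $u\in H^1_0(D^*,\Om)$, observe that $u=0$ a.e.\ on $\Om\setminus D^*$, and plug $u$ into the Rayleigh quotient for $\lambda(\beta,D^*)$. Your lower bound, however, takes a genuinely different and in fact cleaner route than the paper's Lemma~\ref{le:lim*}. The paper works with the \emph{optimal} eigenfunction $u^*_\beta$ for $\od(\beta,\delta)$, uses the structural result of Theorem~\ref{thm:base} (the optimal set is a superlevel set $D^*_\beta=\{u^*_\beta\ge\ell_\beta\}$ with $|\{u^*_\beta=t\}|=0$ for all $t$) to choose $t_\beta$ so that $|E_\beta|=\delta+\eps$ exactly, and relies on $D^*_\beta\subset E_\beta$ together with the normalization $\int_{D^*_\beta}(u^*_\beta)^2=1$. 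You instead take an \emph{arbitrary} admissible $D$ and an \emph{arbitrary} competitor $u$, fix the truncation level $t=\sqrt{b/\eps}$ in terms of the excess mass $b=\int_{\Om\setminus D}u^2$, and use Chebyshev to get $|A_t\setminus D|\le\eps$, never invoking optimality or regularity of $D$. This yields the pointwise inequality $\lambda(\beta,D)\ge\sd(\delta+\eps)\bigl(1-\sqrt{\delta/(\eps\beta)}\bigr)^2$ for \emph{every} $D$ (not just at the optimum), with the same constant; the cost is a slightly more careful final elementary inequality to replace $a$ by $a-\beta b$, which you handle correctly.

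The one place you are too glib is the final squeeze. You invoke ``continuity of $\delta\mapsto\sd(\delta)$, established in Section~\ref{sec:mixed}'', but this is \emph{not} stated there. Since $\sd$ is obviously non-increasing, what you actually need is right-continuity, $\lim_{\eps\to0^+}\sd(\delta+\eps)\ge\sd(\delta)$, and this is precisely what the last paragraph of the paper's proof establishes: take minimizers $E^*_n$ for $\sd(\delta+\eps_n)$, extract a weakly $\gamma$-convergent subsequence $E^*_n\to F$ via the compactness results in~\cite{buve}, and use lower semicontinuity of the measure and of $\mu(\cdot,\Om)$ under weak $\gamma$-convergence to conclude $|F|\le\delta$ and $\sd(\delta)\le\mu(F,\Om)\le\liminf_n\mu(E^*_n,\Om)$. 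As written, your argument asserts this continuity rather than proving or precisely citing it; you should supply the weak $\gamma$-compactness step to close the gap.
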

The proof of such result is rather delicate, and even the easier 
inequality, i.e. $\limsup_{\beta\to+\infty} \od(\beta,\delta) \le \sd(\delta)$, 
requires non-trivial arguments: indeed, for general (quasi-)open $D$, in principle 
the eigenfunctions associated to $\lambda(\beta,D)$ converge to 0, as $\beta\to+\infty$, only a.e. outside $D$, and not also q.e..

Theorem \ref{thm:equality} immediately allows to transfer information from the spectral drop
problem to the survival threshold one. For instance an immediate consequence is the following.
\begin{corollary}\label{coro:intersectsboundary}
Let $\Omega\subset\R^N$ be a bounded Lipschitz domain and $D^*$ be an optimal set for $\sd(\delta)$ with $\mathcal H^{N-1}(\partial D^*\cap \partial \Om)>0$.
Then for all quasi-open $D\subset\Omega$ with $|D|=\delta$ and
$\Hcal^{N-1}(\partial D\cap\partial\Omega) = 0$, and all $\beta$ sufficiently large,
depending on $D$, we have
\[
\la(\beta,D)>\la(\beta,D^*).
\]
\end{corollary}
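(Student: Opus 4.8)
The statement we want is Corollary \ref{coro:intersectsboundary}: if $D^*$ achieves $\sd(\delta)$ and $\Hcal^{N-1}(\partial D^*\cap\partial\Omega)>0$, then any competitor $D$ with $|D|=\delta$ and $\Hcal^{N-1}(\partial D\cap\partial\Omega)=0$ is strictly worse for $\lambda(\beta,\cdot)$ once $\beta$ is large (depending on $D$). So I need a strict separation between the two sets in the limit $\beta\to+\infty$, and then stability of that strict inequality for finite but large $\beta$.

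**Step 1: reduce to the limit problem.** By Lemma \ref{lem:intro_beta2infty} (and its quasi-open generalization promised in Section \ref{sec:beta2infty}), for the fixed competitor $D$ we have $\lim_{\beta\to+\infty}\lambda(\beta,D)=\mu(D,\Omega)$, and likewise $\lim_{\beta\to+\infty}\lambda(\beta,D^*)=\mu(D^*,\Omega)=\sd(\delta)$ since $D^*$ is optimal. Hence it suffices to prove the strict inequality $\mu(D,\Omega)>\sd(\delta)$ at the limit level: once that is known, pick $\eta>0$ with $\mu(D,\Omega)>\sd(\delta)+3\eta$, and for $\beta$ large enough (depending on $D$) we get $\lambda(\beta,D)>\mu(D,\Omega)-\eta>\sd(\delta)+2\eta>\lambda(\beta,D^*)+\eta>\lambda(\beta,D^*)$, which is the claim.

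**Step 2: the strict inequality $\mu(D,\Omega)>\sd(\delta)$.** Since $\sd(\delta)$ is the minimum of $\mu(\cdot,\Omega)$ over quasi-open sets of measure $\delta$, we certainly have $\mu(D,\Omega)\ge\sd(\delta)$; the point is that equality is impossible for $D$ with $\Hcal^{N-1}(\partial D\cap\partial\Omega)=0$. I would argue by contradiction: if $\mu(D,\Omega)=\sd(\delta)$ then $D$ is itself an optimal set for the spectral drop problem. Now I invoke the structural/regularity properties of optimal spectral drops from \cite{buve} and Section \ref{sec:mixed}: an optimal $D$ is (after removing a negligible set) open, its first eigenfunction $u$ is a nonnegative minimizer, and the free boundary $\partial D\cap\Omega$ carries the Dirichlet condition while $\partial D\cap\partial\Omega$ carries the Neumann one. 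The hypothesis $\Hcal^{N-1}(\partial D^*\cap\partial\Omega)>0$ for \emph{some} optimal $D^*$ is weaker than what I need for a given $D$; the correct reading is that \emph{every} optimal set must have this property, because the Neumann part of the boundary is exactly what allows the eigenvalue to drop below the purely-Dirichlet value. Concretely, a clean-room argument: compare $\mu(D,\Omega)$ with the first Dirichlet eigenvalue $\lambda_1^{\mathrm{Dir}}(D)$ of $D$ as a subset of $\R^N$; one always has $\mu(D,\Omega)\le\lambda_1^{\mathrm{Dir}}(D)$, with equality iff the Neumann part is $\Hcal^{N-1}$-negligible (the Neumann eigenfunction then lies in $H^1_0(D)$). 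By the Faber--Krahn inequality $\lambda_1^{\mathrm{Dir}}(D)\ge\lambda_1^{\mathrm{Dir}}(B)$ where $B$ is a ball of volume $\delta$. On the other hand $D^*$, having a genuine Neumann portion, satisfies $\sd(\delta)=\mu(D^*,\Omega)<\lambda_1^{\mathrm{Dir}}(D^*)$ (strict, since the Neumann eigenfunction of $D^*$ does not vanish on $\partial D^*\cap\partial\Omega$ and hence is not admissible for the Dirichlet problem, so the Dirichlet quotient is strictly larger), and in fact one can produce an explicit competitor — e.g. a half-ball or spherical cap glued to a flat piece of $\partial\Omega$ — showing $\sd(\delta)<\lambda_1^{\mathrm{Dir}}(B_\delta)$. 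Chaining these: $\mu(D,\Omega)=\lambda_1^{\mathrm{Dir}}(D)\ge\lambda_1^{\mathrm{Dir}}(B_\delta)>\sd(\delta)$, contradicting $\mu(D,\Omega)=\sd(\delta)$.

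**Main obstacle.** The delicate point is Step 2, specifically justifying that a spectral-drop-optimal set with $\Hcal^{N-1}(\partial D\cap\partial\Omega)=0$ must be a genuine Dirichlet eigenfunction domain, i.e. that $u\in H^1_0(D,\Omega)$ together with the negligibility of the Neumann boundary forces $u\in H^1_0(D)$ in the classical sense. This requires the capacitary fine properties of $H^1_0(D,\Omega)$ and a trace/extension argument on the Lipschitz box $\Omega$: one must rule out that $u$ ``escapes'' through an $\Hcal^{N-1}$-null but positive-capacity portion of $\partial\Omega$. The cleanest route is to use that $\partial D\cap\partial\Omega$ having zero $\Hcal^{N-1}$-measure implies it has zero capacity \emph{relative to the extension across $\partial\Omega$} — but since $\partial\Omega$ is merely Lipschitz this needs care, and I expect this to be the technical heart, likely handled by reflecting $\Omega$ locally across its Lipschitz boundary and reducing to the classical fact that $\Hcal^{N-1}$-null sets are $H^1$-removable is false in general, so instead one argues quantitatively with the strict Faber--Krahn gap and a continuity/compactness argument rather than an exact identity. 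Everything else — passing to the limit in $\beta$, the Faber--Krahn comparison, producing the explicit spherical-cap competitor for $\sd(\delta)$ — is routine given the results already cited.
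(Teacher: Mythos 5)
Your plan reproduces the broad architecture of the paper's proof (pass to the singular limit $\beta\to\infty$, reduce to a strict inequality between the limit eigenvalues, then unfold back to finite $\beta$), but there is a genuine gap in Step 1 that propagates into Step 2. For a quasi-open competitor $D$ the limit established in Section \ref{sec:beta2infty} is
\[
\lim_{\beta\to\infty}\lambda(\beta,D)=\mu(\omega_D,\Omega),
\]
where $\omega_D=\{\widetilde w_D>0\}$ is the quasi-open set associated to the Lebesgue-type space $\widetilde H^1_0(D,\Omega)=H^1_0(\omega_D,\Omega)$ (Lemma~\ref{le:spaceHtilde} and Lemma~\ref{le:lim}), and \emph{not} $\mu(D,\Omega)$. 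Since $H^1_0(D,\Omega)\subset H^1_0(\omega_D,\Omega)$ with possibly strict inclusion, one only has $\mu(\omega_D,\Omega)\le\mu(D,\Omega)$, so proving $\mu(D,\Omega)>\sd(\delta)$ (your stated target) is strictly weaker than what you need: if $\mu(\omega_D,\Omega)=\sd(\delta)$ while $\mu(D,\Omega)>\sd(\delta)$, your $\eta$-separation argument collapses because $\lambda(\beta,D)\to\sd(\delta)$, not above it. The paper closes this hole by transferring the hypothesis: $\Hcal^{N-1}(\partial D\cap\partial\Omega)=0$ and the inclusion $\omega_D\subset\overline D$ q.e.\ (Lemma~\ref{le:spaceHtilde}) force $\Hcal^{N-1}(\partial\omega_D\cap\partial\Omega)=0$, and it is for $\omega_D$ that one argues the strict inequality $\mu(\omega_D,\Omega)>\sd(\delta)$.

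Secondly, your two-sided limit with a triple-$\eta$ buffer is avoidable: the paper uses Lemma~\ref{le:stime}, part 2, to get the \emph{uniform-in-$\beta$} inequality $\lambda(\beta,D^*)\le\mu(D^*,\Omega)=\sd(\delta)$ for \emph{every} $\beta$, so only the limit on the $D$-side is needed and the conclusion is immediate once $\mu(\omega_D,\Omega)>\sd(\delta)$ is known. This is a small simplification but it removes the need to control the speed of convergence of $\lambda(\beta,D^*)$. On your Step 2, I would note that the paper is itself terse at the point where you struggle (``our assumptions entail that $\mu(\omega_D,\Omega)>\sd(\delta)$''); your line of attack — trace vanishing forces the mixed eigenvalue to coincide with the Dirichlet one, then Faber--Krahn, then a strict gap coming from the Neumann portion of $D^*$ — is in the right spirit, but your ``explicit half-ball competitor'' step needs $\partial\Omega$ to have a flat (or at least nondegenerate conical) piece, which a general Lipschitz $\Omega$ does not provide; the strictness must instead be extracted from the given $D^*$ itself (its eigenfunction has nonzero trace on a positive-$\Hcal^{N-1}$ set of $\partial\Omega$, hence is not a Dirichlet eigenfunction).
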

In particular, the assumptions of Corollary~\ref{coro:intersectsboundary} hold if $N\leq 4$ and $\Om\subset\R^N$ is smooth (see Remark~\ref{rem:tocca il bordo}). This suggests that optimal sets for $\od(\beta,\delta)$ should touch the boundary of 
$\Omega$.
 Though this latter property is somehow expected, for general 
$\beta$ it is only known in dimension $N=1$ (as we already mentioned) and in 
the case of rectangular domains, as a consequence of the monotonocity of the bang-bang 
optimal weight \cite[Proposition 5]{llnp}.

Once the connection between the survival threshold problem and the spectral drop one is 
established, the next question we address
is whether the latter admits spherical optimal shapes. The aforementioned ideas by 
Henrot and Oudet partially apply also to the spectral drop
problem, but in this case some space for spherical shapes is left. More precisely, we 
can show the following.
\begin{proposition}\label{prop:henrotoudet}
Let $\Om\subset\R^N$ be a bounded Lipschitz domain, and $0<\delta<|\Om|$.
Let $D^*$ denote an optimal set for $\sd(\delta)$ and assume that
$\partial D^* \cap \Omega$ contains some non empty, relatively open portion of sphere,
centered at some $x_0$.
Then any eigenfunction achieving $\mu(D^*,\Om)$ is radially symmetric in $D^*$, and
\begin{itemize}
\item any regular surface contained in $\partial D^*\cap \Omega$ is a portion of a 
sphere centered at $x_0$;
\item any regular surface contained in $\partial D^*\cap \partial\Omega$ is either a 
portion of a cone
with vertex at $x_0$, or a portion of a sphere centered at $x_0$.
\end{itemize}
\end{proposition}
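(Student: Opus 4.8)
\emph{Proof strategy.} The plan is to run a Henrot--Oudet type argument, as in \cite{ho,llnp}: a relatively open spherical portion of the free boundary, combined with the first order optimality condition for $\sd(\delta)$, forces the first eigenfunction to solve a locally overdetermined problem from which radial symmetry propagates by Holmgren's theorem and unique continuation; once $u$ is radial about $x_0$, its nodal set and its critical set are unions of spheres (respectively, spheres and cones) centered at $x_0$, and the three assertions follow by reading off the Dirichlet and Neumann conditions on the two parts of $\partial D^*$.

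\emph{Radial symmetry.} Let $u$ achieve $\mu(D^*,\Om)$ and let $\Sigma\subset\partial D^*\cap\Om$ be the given relatively open portion of the sphere centered at $x_0$; up to translation we take $x_0=0$. Since $\Sigma$ is a smooth (real analytic) relatively open piece of $\partial D^*$, boundary elliptic regularity gives $u\in C^\infty$ up to $\Sigma$ and $u$ real analytic inside $D^*$; moreover, by shape differentiability of $\mu(\cdot,\Om)$ and the Hadamard formula, the minimality of $D^*$ under the volume constraint yields on the free part $\partial D^*\cap\Om$, and in particular on $\Sigma$, the overdetermined relation $|\nabla u|^2\equiv\Lambda$ for a positive constant $\Lambda$ (the Lagrange multiplier). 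Now fix an infinitesimal rotation of $\R^N$ about the origin, with generating vector field $V$; then $V$ is tangent to every sphere $\{|x|=r\}$, $[\partial_r,V]=0$, and, since rotations commute with the Laplacian, $w:=\nabla u\cdot V=\partial_V u$ satisfies $-\Delta w=\mu w$ in $D^*$. On $\Sigma$ we have $u\equiv 0$, hence $\nabla u$ is radial there and $w=0$ on $\Sigma$; also $\partial_\nu w=\partial_r w=\partial_V(\partial_r u)=0$ on $\Sigma$, because $\partial_r u=\pm\sqrt\Lambda$ is constant along $\Sigma$ and $V$ is tangent to $\Sigma$. Since $\Sigma$ is a non-characteristic analytic surface and $-\Delta-\mu$ has constant coefficients, Holmgren's uniqueness theorem gives $w\equiv 0$ in a one-sided neighbourhood of $\Sigma$ in $D^*$, and then unique continuation for $-\Delta-\mu$ propagates $w\equiv 0$ to the connected component of $D^*$ whose boundary meets $\Sigma$ (which is the one carrying the eigenfunction). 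As $V$ ranges over all rotation generators about $x_0$, we conclude that $u$ is invariant under all such rotations, i.e. $u(x)=\phi(|x-x_0|)$.

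\emph{Shape of the surfaces.} The profile $\phi$ solves $\phi''+\tfrac{N-1}{r}\phi'+\mu\phi=0$, so by Cauchy uniqueness $\phi$ and $\phi'$ never vanish simultaneously, and $Z:=\{\phi=0\}$, $C:=\{\phi'=0\}$ are discrete in $(0,\infty)$. If $\Gamma$ is a connected regular surface in $\partial D^*\cap\Om$, then $u=0$ on $\Gamma$ gives $|x-x_0|\in Z$ on $\Gamma$, hence $|x-x_0|$ is constant by connectedness and $\Gamma$ is a portion of a sphere centered at $x_0$. If instead $\Gamma$ is a connected regular surface in $\partial D^*\cap\partial\Om$, with outward normal $\nu$, the Neumann condition reads $0=\partial_\nu u=\phi'(|x-x_0|)\,\dfrac{(x-x_0)\cdot\nu}{|x-x_0|}$, so at each point of $\Gamma$ either $\phi'(|x-x_0|)=0$ or $(x-x_0)\cdot\nu=0$. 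The set $A=\{x\in\Gamma:(x-x_0)\cdot\nu(x)\neq 0\}$ is open, and on each component of $A$ one has $\phi'(|x-x_0|)=0$; since $C$ is discrete this forces $|x-x_0|$ locally constant, so $\Gamma$ is locally a spherical cap centered at $x_0$ and there $(x-x_0)\cdot\nu=\pm|x-x_0|\neq 0$, which shows $A$ is closed in $\Gamma$ as well. By connectedness $A=\Gamma$, in which case $|x-x_0|$ is constant and $\Gamma$ is a portion of a sphere centered at $x_0$, or $A=\emptyset$, in which case $(x-x_0)\cdot\nu\equiv 0$ on $\Gamma$; in the latter case the position field $x\mapsto x-x_0$ is everywhere tangent to $\Gamma$, so $\Gamma$ is invariant under dilations about $x_0$, i.e. a portion of a cone with vertex $x_0$.

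\emph{Main obstacle.} The delicate points are the rigorous justification of $|\nabla u|^2\equiv\Lambda$ on $\Sigma$ and the propagation of $w\equiv 0$: the former needs enough regularity of $D^*$ near $\Sigma$ for the Hadamard derivative to apply, which is provided by the assumed smoothness of $\Sigma$ together with boundary elliptic regularity for $u$ and simplicity of $\mu$; the latter combines Holmgren's theorem near the analytic surface $\Sigma$ with interior unique continuation for $-\Delta-\mu$. A further (minor) subtlety is the possibility that $D^*$ is disconnected, in which case one argues on the component adjacent to $\Sigma$, the remaining components (on which the eigenfunction vanishes) not affecting the three conclusions.
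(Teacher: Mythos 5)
Your proof is correct and follows essentially the same route as the paper: the overdetermined condition $|\nabla u|^2\equiv c$ on the spherical portion $\Sigma$ (from the shape derivative), vanishing of the angular derivatives $\partial_V u$ via their homogeneous Cauchy data on $\Sigma$ plus unique continuation (the paper invokes Cauchy--Kovalevskaya where you invoke Holmgren, an equivalent step here), radial symmetry, and then the two boundary cases read off the Dirichlet and Neumann conditions using the ODE for the radial profile. Your treatment of $\partial D^*\cap\partial\Omega$ via the clopen set $A=\{(x-x_0)\cdot\nu\neq0\}$ is a cleaner justification of the same dichotomy that the paper argues more informally (``no such $\gamma_1$ and $\gamma_2$ can be joined in a regular way'').
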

\begin{figure}[ht]
\begin{center}
\hfill
\begin{tikzpicture}
\draw[draw={\grigetto}, fill={\grigetto}] (0,0) -- (2.5,0) arc (0:70:2.5) -- cycle;
\draw[draw={\grigetto}, fill={\grigetto}] (35:1.6) node {$D^*$};
\draw[thick, dashed] (2.5,0) arc (0:70:2.5);
\draw[thick] (0,0) -- (3,0) to[out=0,in=-180+80] (2.3,2) node[right] {$\Omega$} 
to[out=80,in=70] (70:2.8) -- cycle;
\end{tikzpicture}
\hfill
\begin{tikzpicture}
\draw[draw={\grigetto}, fill={\grigetto}] (70:1) arc (70:0:1) -- (2.5,0) arc (0:70:2.5) -- cycle;
\draw[draw={\grigetto}, fill={\grigetto}] (35:1.8) node {$D^*$};
\draw[thick, dashed] (2.5,0) arc (0:70:2.5);
\draw[thick] (70:1) arc (70:0:1) -- (3,0) to[out=0,in=-180+80] (2.3,2) node[right] {$\Omega$} 
to[out=80,in=70] (70:2.8) -- cycle;
\end{tikzpicture}
\hfill
\begin{tikzpicture}
\draw[draw={\grigetto}, fill={\grigetto}] (70:1) arc (70:0:1) -- (2.5,0) arc (0:70:2.5) -- 
cycle;
\draw[draw={\grigetto}, fill={\grigetto}] (35:1.8) node {$D^*$};
\draw[thick, dashed] (2.5,0) arc (0:70:2.5) (70:1) arc (70:0:1);
\draw[thick] (70:.8) to[out=-180+70,in=-180] (.7,0) -- (3,0) to[out=0,in=-180+80] (2.3,2) 
node[right] {$\Omega$} to[out=80,in=70] (70:2.8) -- cycle;
\end{tikzpicture}
\hfill
\caption{possible shapes of $D^*$, %filled in gray, 
according to  Proposition \ref{prop:henrotoudet}. The Dirichlet boundary $\partial D^*\cap\Omega$ is dashed.
\label{fig:HO}}
\end{center}
\end{figure}
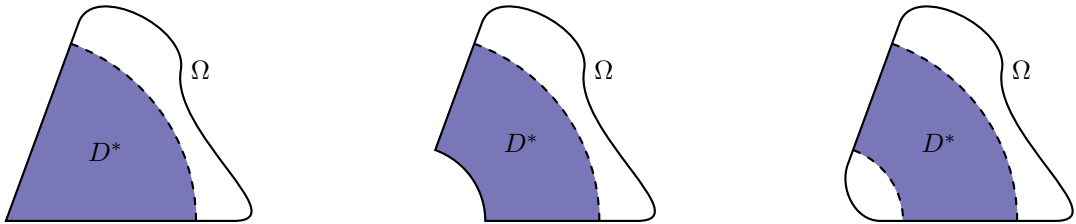
In Fig. \ref{fig:HO} we illustrate the three cases allowed by the necessary condition stated in the above proposition. In particular, in case $\Omega$ is a rectangle, the above result does not exclude spherical
spectral drops, centered at a vertex. Actually,
using symmetrization techniques borrowed from \cite{MR876139,MR963504}, we can show
that spherical shapes appear, at least when
$\delta$ is not too large. For this result we exploit relative isoperimetric inequalities obtained in \cite{cianchi,MR3335407}. We state
our results for $N$-dimensional polyhedra, even though this holds true for any convex $\Omega$ which coincides locally
with its tangent cone having smallest solid angle.
\begin{theorem}\label{thm:main_polythope}
Let $\Omega \subset \R^N$ be a bounded, convex polytope. There exists $\bar\delta>0$ such that, for any $0<\delta< \bar\delta$:
\begin{itemize}
\item $D^*$ is a minimizer of the spectral drop problem
in $\Omega$, with volume constraint $\delta$, if and only if
$D^*=B_{r(\delta)}(x_0)\cap\Omega$, where $x_0$ is a vertex of $\Omega$ with the smallest solid angle;
\item if $|D|=\delta$ and $D$ is not a spherical shape as above, then, for $\beta$ sufficiently large,
\[
\lambda(\beta,D)> \lambda (\beta,B_{r(\delta)}(x_0)\cap \Omega).
\]
\end{itemize}
In addition, in case $\Omega = (0,L_1) \times (0,L_2)$, with $L_1\le L_2$, we have $\bar\delta\ge L_1^2/\pi$, and any minimizing
spectral drop is a quarter of a disk centered at a vertex of $\Omega$ for any $0<\delta< L_1^2/\pi$.
\end{theorem}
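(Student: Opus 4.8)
The plan is to combine two ingredients: a symmetrization argument that reduces the spectral drop problem on a polytope to the same problem on the tangent cone with smallest solid angle, and a small-volume asymptotic analysis that identifies the optimal shape on a cone as the spherical sector. The bridge to the survival threshold problem will then come from Theorem \ref{thm:equality}.

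First I would set up the $\alpha$-symmetrization on cones. Let $\mathcal{C}$ be the convex cone in $\R^N$ whose opening coincides with the tangent cone of $\Omega$ at its sharpest vertex $x_0$, and let $\alpha=\alpha(\mathcal{C})$ be the corresponding weight so that balls centered at the vertex are $\alpha$-isoperimetric (this is the relative isoperimetric inequality from \cite{cianchi,MR3335407}). Given an optimal drop $D^*$ for $\mathrm{M}(\delta)$ in $\Omega$ and an eigenfunction $u$ achieving $\mu(D^*,\Omega)$, I would extend $u$ by zero to $H^1_0(D^*,\Omega)$, transplant the sublevel-set structure of $u$ into $\mathcal{C}$ via the $\alpha$-symmetrization of \cite{MR876139,MR963504}, and use the relative isoperimetric inequality to show that the Rayleigh quotient does not increase. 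This yields $\mathrm{M}_{\mathcal{C}}(\delta) \le \mathrm{M}(\delta)$, where $\mathrm{M}_{\mathcal{C}}(\delta)$ is the analogous minimum over drops of volume $\delta$ in $\mathcal{C}$, and on $\mathcal{C}$ the spherical sector $B_{r(\delta)}(0)\cap\mathcal{C}$ is optimal, with equality forcing $u$ radial and $D^*$ of the claimed type. The technical subtlety here is handling the Neumann part of the boundary: the symmetrization must respect that $\partial D^*\cap\partial\Omega$ contributes no boundary term, which is exactly what the relative (rather than absolute) isoperimetric inequality in a cone encodes.

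Next comes the point of using \emph{small} $\delta$: for a general polytope, $\Omega$ does not coincide with a single tangent cone, so the symmetrization above is not globally available. Here I would argue that, for $\delta$ small, an optimal drop must concentrate near a single vertex. One way: by scaling, $\mathrm{M}(\delta)\sim c\,\delta^{-2/N}$ with constant governed by the sharpest cone, so a drop spreading over a region that is not asymptotically a spherical sector at the sharpest vertex has strictly larger eigenvalue; combined with the rigidity in the cone, this pins down $D^*$ exactly for $\delta<\bar\delta$. The threshold $\bar\delta$ is the largest volume for which $B_{r(\delta)}(x_0)\cap\Omega$ still ``sees only'' the tangent cone at $x_0$ (i.e.\ the ball does not reach other faces). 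For the rectangle $(0,L_1)\times(0,L_2)$ with $L_1\le L_2$, the sharpest vertex has a right-angle cone (a quadrant), $r(\delta)=\sqrt{4\delta/\pi}$, and the quarter-disk stays inside $\Omega$ as long as $r(\delta)\le L_1$, i.e.\ $\delta\le \pi L_1^2/4$; a short direct computation of the Neumann--Dirichlet eigenvalue of the quarter disk (it equals that of the full disk with Dirichlet data, $j_{0,1}^2/r^2$) together with the quantitative stability of the cone symmetrization upgrades this to $\bar\delta\ge L_1^2/\pi$ as claimed.

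Finally, the second bullet follows by feeding the first into Theorem \ref{thm:equality}: if $D$ with $|D|=\delta$ is not the spherical shape, then $\mu(D,\Omega)\ge \mathrm{M}(\delta)=\mu(B_{r(\delta)}(x_0)\cap\Omega,\Omega)$ with a strict gap (by the rigidity in the first bullet, applied to $\mathrm{M}(\delta+\eps)$ as in Corollary \ref{coro:intersectsboundary}), and since $\lambda(\beta,\cdot)\to\mu(\cdot,\Omega)$ while the chain of inequalities in Theorem \ref{thm:equality} gives $\od(\beta,\delta)\to\mathrm{M}(\delta)$ from below, for $\beta$ large the strict inequality persists: $\lambda(\beta,D)>\lambda(\beta,B_{r(\delta)}(x_0)\cap\Omega)$. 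I expect the main obstacle to be the equality/rigidity case of the cone symmetrization — proving that no increase of the Rayleigh quotient forces the drop to be \emph{exactly} a spherical sector, not merely asymptotically one — since this requires the equality case of the relative isoperimetric inequality in a convex cone together with a careful argument that the level sets of $u$ are genuine balls centered at the vertex.
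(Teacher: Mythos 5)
Your high-level approach — $\alpha$-symmetrization on cones plus the bridge to $\od(\beta,\delta)$ via Theorem \ref{thm:equality} — is in the right spirit, but it diverges from the paper at exactly the places that matter, and in one spot it is actually incorrect.

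The paper does not perform a two-stage argument (``first show the drop concentrates at a vertex, then symmetrize onto the local tangent cone''). Instead, Proposition \ref{prop:isoper_generale} is applied directly on the whole polytope $\Omega$, using the relative isoperimetric constant $K(\Omega,\delta)$ of \eqref{eq:isop_const}, and Corollary \ref{coro:cone_for_K} shows that the spherical drop $B_{r(\alpha,\delta)}(x_0)\cap\Omega$ achieves the lower bound as soon as $K(\Omega,\delta)$ is realized (in the limit) by geodesic balls at a vertex of smallest solid angle. The missing ingredient in your proposal is precisely the result that identifies those isoperimetric regions, namely \cite[Thm.~6.8]{MR3335407}. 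Without it, your ``scaling plus concentration'' argument only gives asymptotic optimality; it does not pin down $D^*$ exactly for every $\delta<\bar\delta$, and you acknowledge but do not resolve that gap.

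More importantly, your description of $\bar\delta$ as ``the largest volume for which $B_{r(\delta)}(x_0)\cap\Omega$ still sees only the tangent cone at $x_0$'' is wrong, and the paper warns against exactly this misconception (see the discussion after Theorem \ref{thm:main_polythope} and Remark \ref{rem:bardelta}). The relevant threshold is not geometric inclusion of the ball in the cone; it is the volume at which the relative isoperimetric problem \eqref{eq:isop_const} in $\Omega$ stops being minimized by spherical caps at the sharpest vertex, and this can happen strictly before the ball touches another face. For the rectangle this is what Lemma \ref{lem:relisopineq} captures: for $\delta\le L_1^2/\pi$ the quarter-disk beats the strip $S_\delta=(0,L_1)\times(0,\delta/L_1)$ in the relative perimeter-to-volume ratio, and for $\delta>L_1^2/\pi$ the strip wins. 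Your computation $r(\delta)\le L_1\iff\delta\le\pi L_1^2/4$ is a different (larger) quantity, and you offer no concrete mechanism (the strip competitor never appears in your argument) to explain the claimed bound $L_1^2/\pi$; invoking ``quantitative stability of the cone symmetrization'' does not substitute for the isoperimetric comparison with the strip. You should also note that the reflection remark about the quarter-disk eigenvalue equalling $j_{0,1}^2/r^2$, while true, does not bear on the value of $\bar\delta$.

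Finally, your closing worry about the equality/rigidity case is well placed: the ``only if'' direction requires equality in both the H\"older step and in each level-set isoperimetric inequality inside the proof of Proposition \ref{prop:isoper_generale}, and the latter is where \cite[Thm.~6.8]{MR3335407} again enters to force the superlevel sets to be spherical caps at a sharpest vertex. This is not addressed by a generic stability statement; it is the equality case of the polytope isoperimetric theorem combined with the usual symmetrization rigidity.
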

Let us notice that, in general, $\bar\delta$ is strictly smaller than the largest 
$\delta$ such that $B_{r(\delta)}(x_0)\cap\Omega$ is a cone (see Remark \ref{rem:bardelta}). 
In~\cite{mapeve_matrix} we obtain explicit estimates on $\bar\delta$ in the case of a polygon in $\R^2$, as well as quantitative estimates on $\od(\beta,\delta)$ and 
$\sd(\delta)$. 

As a consequence of the above theorem, we have that the conjecture about circular optimal shapes in a rectangle, which is false
for the survival threshold problem, for any $\beta$, becomes true in the singular limit $\beta\to+\infty$. This somehow
helps to understand the different results obtained in \cite{haro} and \cite{llnp}.

As stated in Proposition \ref{prop:henrotoudet}, in case $\partial\Omega$ does not contain portions of spheres or cones, one can not have spherical spectral drops in $\Omega$. Motivated by this, the last question we address in this paper is whether spherical shapes can be recovered also in this case,
up to the further singular perturbation $\delta\to0$. Actually, we show that this is the case: when the volume $\delta$ becomes
very small, then $\sd(\delta)$ tends to be achieved by portions of spheres, centered at points $x_0\in\partial \Omega$ having large mean curvature $H(x_0)$.
\begin{theorem}\label{thm:main_small}
There exist explicit universal constants $0<\overline{C}_N<\underline{C}_N$ such that,
for every $\Omega$ of class $C^2$ and for any $D^*$ achieving $\sd(\delta)$ we have
\begin{equation}\label{eq:pinch}
-\underline{C}_{N}\hat H + o(1) \le
\frac{\mu(D^*, \Om)-\mu(B^+_1,\R^N_+)|B_1^+| ^{2/N}  \cdot \delta^{-2/N} }{\mu(B^+_1,\R^N_+)|B_1^+| ^{2/N} \delta^{-1/N}}
\le -\overline{C}_{N}\hat H+o(1),\qquad\text{as }\delta\to0^+,
\end{equation}
where $\hat H = \max_{x_0\in \partial \Om}H(x_0)$, the maximum of the mean curvature of $\partial\Omega$. In particular,
\[
\mu(D^*, \Om)-\mu(B_{r(\delta)}(x_0)\cap\Omega,\Omega) = o(\delta^{-2/N})
\qquad\text{as }\delta\to0^+,
\]
for every $x_0\in\partial\Omega$, where $r(\delta)$ is such that $|B_{r(\delta)}(x_0)\cap\Omega|=\delta$.
\end{theorem}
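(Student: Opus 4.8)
The plan is a blow-up analysis at vanishing volume, combined with the symmetrization techniques already used for Theorem~\ref{thm:main_polythope}, localized near a boundary point and refined so as to capture the first-order mean-curvature correction. Throughout I use the scaling $\mu(tD,t\Omega)=t^{-2}\mu(D,\Omega)$, so that $\mu(\cdot)\,|\cdot|^{2/N}$ is scale invariant; in particular, since a $C^2$ domain blown up at a boundary point converges in $C^2$ to a half-space, the claimed leading term $\mu(B^+_1,\R^N_+)\,|B^+_1|^{2/N}\,\delta^{-2/N}$ is exactly the mixed eigenvalue of the optimal spectral drop of volume $\delta$ in a half-space, namely a half-ball centred on the flat boundary. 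By even reflection across the bounding hyperplane (which reduces to the classical Faber--Krahn inequality) one has $\mu(D,\R^N_+)\,|D|^{2/N}\ge\mu(B^+_1,\R^N_+)\,|B^+_1|^{2/N}$ for every quasi-open $D\subset\R^N_+$; I also record the strict gap $\mu(B^+_1,\R^N_+)\,|B^+_1|^{2/N}=2^{-2/N}\lambda_1(B_1)\,|B_1|^{2/N}<\lambda_1(B_1)\,|B_1|^{2/N}$, which makes a spectral drop lying in the interior of $\Omega$ strictly more expensive, at leading order, than one touching $\partial\Omega$.

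\textbf{Concentration and the upper bound.} Testing $\sd(\delta)$ against $D_\delta:=B_{r(\delta)}(\hat x_0)\cap\Omega$, with $\hat x_0\in\partial\Omega$ realizing $\hat H$ and $|D_\delta|=\delta$, gives $\sd(\delta)\le\delta^{-2/N}\mu(B^+_1,\R^N_+)\,|B^+_1|^{2/N}(1+o(1))$; combined with the matching leading-order lower bound $\mu(D,\Omega)\ge\delta^{-2/N}\mu(B^+_1,\R^N_+)\,|B^+_1|^{2/N}(1+o(1))$, valid for every $D$ with $|D|=\delta$ (relative isoperimetric inequality in $\Omega$ plus symmetrization), and with the strict gap above, a quantitative Faber--Krahn stability argument in the spirit of \cite{buve} forces any optimal $D^*$, after discarding a subset of vanishing measure, into a ball $B_{C\delta^{1/N}}(x_0(\delta))$ with $\dist(x_0(\delta),\partial\Omega)=o(\delta^{1/N})$; replacing $x_0(\delta)$ by its projection, I assume $x_0(\delta)\in\partial\Omega$. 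For the sharp upper bound I refine the competitor estimate: writing $\partial\Omega$ near $\hat x_0$ as the graph $x_N=\frac12\sum_i\kappa_i(x_i')^2+o(|x'|^2)$ in principal-curvature coordinates and transplanting the half-ball eigenfunction via the local boundary-straightening diffeomorphism, a careful expansion of the Rayleigh quotient defining $\mu$ produces a first-order correction $-\overline C_N\hat H\,\delta^{1/N}$ --- near a boundary point of large mean curvature a region of fixed small volume fits with a smaller relative perimeter --- so that $\sd(\delta)\le\delta^{-2/N}\big(\mu(B^+_1,\R^N_+)\,|B^+_1|^{2/N}-\overline C_N\hat H\,\delta^{1/N}+o(\delta^{1/N})\big)$, which is the right-hand inequality in \eqref{eq:pinch}.

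\textbf{The lower bound.} For the left-hand inequality I run the symmetrization proof of Faber--Krahn, localized near the concentration point and refined. Let $u\in H^1_0(D^*,\Omega)$ be an eigenfunction; by the concentration step, for almost every $t$ the super-level set $E_t=\{|u|>t\}$ lies in $D^*\subset B_{C\delta^{1/N}}(x_0(\delta))\cap\Omega$, hence has small volume and sits near $\partial\Omega$, so the asymptotic relative isoperimetric inequality in $C^2$ domains --- a refinement, incorporating the first-order curvature term, of the relative isoperimetric inequalities of \cite{cianchi,MR3335407} used for Theorem~\ref{thm:main_polythope} --- bounds $\mathcal H^{N-1}(\partial^*E_t\cap\Omega)$ below by the relative perimeter in a half-space of a half-ball of volume $|E_t|$, times a factor $1-c_N H(x_0(\delta))\,|E_t|^{1/N}+o(|E_t|^{1/N})$. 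Feeding this into the coarea formula and the one-dimensional rearrangement that proves Faber--Krahn, the first-order correction propagates into the Rayleigh quotient and gives $\mu(D^*,\Omega)\ge\delta^{-2/N}\mu(B^+_1,\R^N_+)\,|B^+_1|^{2/N}\big(1-\underline C_N H(x_0(\delta))\,\delta^{1/N}+o(\delta^{1/N})\big)$; since $H(x_0(\delta))\le\hat H$ and $\underline C_N>0$, this is $\ge\delta^{-2/N}\mu(B^+_1,\R^N_+)\,|B^+_1|^{2/N}\big(1-\underline C_N\hat H\,\delta^{1/N}+o(\delta^{1/N})\big)$, the left-hand inequality in \eqref{eq:pinch}.

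\textbf{Main obstacle, and the last assertion.} The delicate point is the lower bound: one must establish the relative isoperimetric inequality with an \emph{explicit} first-order mean-curvature correction, \emph{uniformly} both as $|E_t|\to0$ and as the base point $x_0(\delta)$ runs over $\partial\Omega$, and then carry this correction through the (a priori merely $L^1$) rearrangement without losing the exact order $\delta^{1/N}$; this is where the $C^2$ regularity of $\Omega$ is essential, and it also explains the gap $\overline C_N<\underline C_N$, since the upper bound comes from one explicit competitor at the best point whereas the lower bound must dominate every admissible configuration by a robust --- hence non-sharp --- estimate. (An alternative lower-bound route, via a Hadamard-type first variation of the mixed eigenvalue under perturbation of its Neumann portion and the regularity of optimal spectral drops from \cite{buve}, runs into the same need for uniformity over the a priori barely regular $D^*$.) Finally, \eqref{eq:pinch} gives $\mu(D^*,\Omega)=\delta^{-2/N}\mu(B^+_1,\R^N_+)\,|B^+_1|^{2/N}+O(\delta^{-1/N})$; applying the blow-up of the second step to the explicit set $B_{r(\delta)}(x_0)\cap\Omega$, for an arbitrary $x_0\in\partial\Omega$, yields the same expansion $\mu(B_{r(\delta)}(x_0)\cap\Omega,\Omega)=\delta^{-2/N}\mu(B^+_1,\R^N_+)\,|B^+_1|^{2/N}+O(\delta^{-1/N})$, so subtracting, the leading terms cancel and $\mu(D^*,\Omega)-\mu(B_{r(\delta)}(x_0)\cap\Omega,\Omega)=O(\delta^{-1/N})=o(\delta^{-2/N})$, which is the last assertion.
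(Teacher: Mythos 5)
Your upper bound follows essentially the same route as the paper (test $\sd(\delta)$ against $B_{r(\delta)}(\hat x_0)\cap\Omega$ with $\hat x_0$ a point of maximal mean curvature, transplant the half-ball Dirichlet eigenfunction, expand the Rayleigh quotient to first order in $r$, then invert $\delta\mapsto r(\delta)$), and that part is sound; the paper does exactly this in Proposition~\ref{prop:asymptotic_in_r} together with Lemma~\ref{lem:elementary_asymptotics}.

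The lower bound, however, takes a genuinely different route from the paper's, and it is precisely there that your argument has a gap which you yourself flag without closing. You propose first to prove a concentration statement (that any $D^*$ achieving $\sd(\delta)$ lies, up to a vanishing-measure error, inside a ball of radius $C\delta^{1/N}$ centred at a boundary point $x_0(\delta)$), and then to feed a \emph{localized} relative isoperimetric inequality with an explicit $H(x_0(\delta))$-correction --- uniform both in the volume and in the base point --- into the coarea/rearrangement proof of Faber--Krahn. Neither ingredient is established: the concentration claim is attributed loosely to a ``quantitative Faber--Krahn stability argument in the spirit of \cite{buve}'', which is not in that reference, and the uniform sharp localized isoperimetric inequality in a $C^2$ domain is exactly what you identify as the ``main obstacle''. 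The paper avoids both difficulties entirely by using a \emph{global} ingredient: the asymptotic expansion of the relative isoperimetric profile due to Fall \cite{fall},
\[
I_\Om(v)= I_{\R^N_+}(v)\bigl(1-\beta_{N-1}\hat{H}\,v^{1/N}+O(v^{2/N})\bigr),\qquad v\to 0,
\]
which already incorporates the worst-case mean curvature $\hat H=\max_{\partial\Omega}H$. Plugging this into the relative isoperimetric constant $K(\Omega,\delta)$ of \eqref{eq:isop_const} and invoking Proposition~\ref{prop:isoper_generale} (the $\alpha$-symmetrization estimate $\mu(D,\Omega)\ge K^2(\Omega,\delta)\lambda_1^{\text{Dir}}|D|^{-2/N}$, valid for \emph{every} $D$ with $|D|\le\delta$, not only for optimizers) gives the lower bound directly, with no concentration or localization step at all. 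In short: your scheme could in principle be made to work, but it is substantially harder than the paper's, and as written it leaves the two key lemmas unproved; the missing piece that makes the paper's proof short is the reference to Fall's isoperimetric expansion, which supersedes the convex-polytope inequalities of \cite{cianchi,MR3335407} you cite and renders the concentration step unnecessary.
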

The above theorem implies that the exact first order term in the expansion of 
$\mu(D^*, \Om)$ is given by the eigenvalue of a portion of sphere centered at any  
$x_0\in\partial\Omega$, and \eqref{eq:pinch} yields $\mu(D^*, \Om) \sim C 
\delta^{-2/N}$ as $\delta\to0^+$. 

In addition, we have a bound on the second order term, depending on the maximal mean 
curvature. More precisely the estimate from above is inspired by computations performed 
in \cite{yyli}. On the other hand, the estimate from below exploits sharp relative 
isoperimetric inequalities proved by Fall in \cite{fall}. As explained in such paper, asymptotic spherical optimal shapes for isoperimetric inequalities with small volume
constraints have been object of large interest in the last years. We refer 
to \cite{MR1897460} for absolute isoperimetric inequalities in manifold without boundary, and to \cite{fall} and references therein for relative isoperimetric inequalities in manifold with boundary.
From Theorem~\ref{thm:main_small} one can also deduce a quantitative estimate of how much spherical shapes are far from being optimal for \eqref{eq:def_od}, under the regime of $\delta$ small and $\beta$ large.
\begin{corollary}\label{cor:post_small}
Under the notation of Theorem \ref{thm:main_small} we have
\begin{equation}\label{eq:pinchBr}
1\leq \frac{\la(\beta,B_{r(\delta)}(x_0))}{\od(\beta,\delta)}\le
\left(1+A \delta^{1/N} +o(\delta^{1/N})\right)
\left(1+B \beta^{-1/3} +o(\beta^{-1/3})\right),\qquad
\text{as }\delta\rightarrow 0^+,\ \beta\to+\infty,
\end{equation}
where $A = \underline{C}_N\hat H - \overline{C}_N H(x_0)$ and $B = \frac2N + 2$.
\end{corollary}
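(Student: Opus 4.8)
The plan is to combine the two chains of estimates that have already been established, namely Theorem~\ref{thm:equality} (relating $\od(\beta,\delta)$ to $\sd$) and Theorem~\ref{thm:main_small} (the sharp asymptotics of $\sd(\delta)$ with a second-order term controlled by the mean curvature). The left inequality in \eqref{eq:pinchBr} is essentially definitional: since $B_{r(\delta)}(x_0)$ is an admissible competitor of volume $\delta$ (the constraint in \eqref{eq:def_od} being $|D|=\delta$), one has $\la(\beta,B_{r(\delta)}(x_0))\ge\od(\beta,\delta)$, so the ratio is $\ge1$. Here $B_{r(\delta)}(x_0)$ should be read as $B_{r(\delta)}(x_0)\cap\Omega$ in the notation of Theorem~\ref{thm:main_small}, with $r(\delta)$ chosen so that $|B_{r(\delta)}(x_0)\cap\Omega|=\delta$.

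For the right inequality, the first step is to bound the numerator from above and the denominator from below, and to factor the resulting quotient into a ``$\delta$-part'' and a ``$\beta$-part'', which explains the product structure on the right-hand side of \eqref{eq:pinchBr}. For the $\beta$-part, I would apply Theorem~\ref{thm:equality} with the set $D=D^*$ achieving $\sd(\delta)$ (or rather with an optimal set for $\od$) to get a lower bound of the form $\od(\beta,\delta)\ge\sd(\delta+\eps)\bigl(1-\sqrt{\delta/(\eps\beta)}\bigr)^2$; optimizing the free parameter $\eps$ — taking $\eps$ a suitable power of $\beta$, e.g. $\eps\sim\beta^{-1/3}\cdot(\text{const})$ so as to balance the error $\sqrt{\delta/(\eps\beta)}$ against the cost $\sd(\delta+\eps)/\sd(\delta)-1$ coming from enlarging the volume — yields the factor $1+B\beta^{-1/3}+o(\beta^{-1/3})$ with $B=\tfrac2N+2$; the exponent $1/3$ and the constant $\tfrac2N+2$ come precisely from this balancing, using that $\sd$ scales like $\delta^{-2/N}$ so that $\sd(\delta+\eps)/\sd(\delta)=1+O(\eps/\delta)$ and $(1-\sqrt{\delta/(\eps\beta)})^{-2}=1+O(\sqrt{\delta/(\eps\beta)})$.

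For the $\delta$-part, I would use that, by Lemma~\ref{lem:intro_beta2infty} (or its quasi-open generalization), $\la(\beta,B_{r(\delta)}(x_0))\to\mu(B_{r(\delta)}(x_0)\cap\Omega,\Omega)$ and, by a scaling/blow-up argument as $\delta\to0$, $\mu(B_{r(\delta)}(x_0)\cap\Omega,\Omega)=\mu(B_1^+,\R^N_+)|B_1^+|^{2/N}\delta^{-2/N}\bigl(1-\overline C_N H(x_0)\delta^{1/N}+o(\delta^{1/N})\bigr)$, i.e. the right-hand inequality in \eqref{eq:pinch} applied to the spherical competitor itself (which is, up to lower order, an equality for such a set). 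Combined with the lower bound on $\od(\beta,\delta)$ obtained from Theorem~\ref{thm:main_small}, namely $\od(\beta,\delta)\ge\sd(\delta)(1+o(1))\ge\mu(B_1^+,\R^N_+)|B_1^+|^{2/N}\delta^{-2/N}\bigl(1-\underline C_N\hat H\delta^{1/N}+o(\delta^{1/N})\bigr)$, the ratio of the two leading terms produces $\dfrac{1-\overline C_N H(x_0)\delta^{1/N}}{1-\underline C_N\hat H\delta^{1/N}}=1+(\underline C_N\hat H-\overline C_N H(x_0))\delta^{1/N}+o(\delta^{1/N})$, i.e. the factor $1+A\delta^{1/N}+o(\delta^{1/N})$ with $A=\underline C_N\hat H-\overline C_N H(x_0)$. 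Finally I would multiply the two factors and absorb the cross term $AB\,\delta^{1/N}\beta^{-1/3}$, which is of lower order with respect to each of $\delta^{1/N}$ and $\beta^{-1/3}$, into the error terms, obtaining exactly \eqref{eq:pinchBr}.

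The main obstacle is the bookkeeping of the two independent small parameters $\delta\to0^+$ and $\beta\to+\infty$: the error terms in Theorem~\ref{thm:equality} involve $\delta$, $\eps$ and $\beta$ simultaneously, and one must choose $\eps=\eps(\beta)$ in a way that is uniform as $\delta$ stays fixed (then let $\delta\to0$), so that the $o(\beta^{-1/3})$ remainder does not secretly depend on $\delta$ in an uncontrolled way. Equivalently, one should first fix $\delta$, run the $\beta\to+\infty$ argument to get the second factor with constants uniform in $\delta$ on compact subsets of $(0,|\Omega|)$, and only afterwards perform the $\delta\to0^+$ expansion; checking this uniformity — essentially that the implied constants in the expansion $\sd(\delta+\eps)/\sd(\delta)=1+O(\eps/\delta)$ are locally bounded, which follows from the known scaling and monotonicity of $\sd$ — is the one genuinely delicate point. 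Everything else is a routine combination of the two main theorems.
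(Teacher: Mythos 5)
Your outline follows the paper's proof exactly: combine the upper bound $\la(\beta,B_{r(\delta)}(x_0)) \le \mu(B_{r(\delta)}(x_0)\cap\Omega,\Omega)$ (Lemma~\ref{le:stime}(2), which is an inequality valid for every $\beta$ --- you want to invoke this, not merely the $\beta\to\infty$ limit of Lemma~\ref{lem:intro_beta2infty}), the lower bound $\od(\beta,\delta)\ge\sd(\delta+\eps)\bigl(1-\sqrt{\delta/(\eps\beta)}\bigr)^2$ from Theorem~\ref{thm:equality}, the two expansions from Theorem~\ref{thm:main_small}, and then tune $\eps$. The only imprecision in your sketch is the choice of $\eps$: the paper takes $\eps = \delta\,\beta^{-1/3}$ (so the ``const'' in your ``$\eps\sim\beta^{-1/3}\cdot\mathrm{const}$'' must be $\delta$ itself), which makes both $\sqrt{\delta/(\eps\beta)}=\beta^{-1/3}$ and $\eps/\delta=\beta^{-1/3}$ exactly $\delta$-free, so the quotient genuinely factorizes into a pure $\delta$-part with coefficient $A$ and a pure $\beta$-part with coefficient $B=\tfrac2N+2$, which dissolves the uniformity bookkeeping you flagged as the main obstacle.
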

It is natural to ask whether also $\lim_{\delta\to0}
\od(\beta,\delta)$, for fixed $\beta>0$, is achieved by
asymptotic spherical shapes. This appears to be a difficult question and is under
study. In particular, symmetrization techniques can not be applied to this problem in a
direct way, since the eigenfunctions related to this problem are positive in the whole $\Omega$, and one should symmetrize also superlevel sets having measure near $|\Omega|$.

As a final remark, we observe that the main theme of this paper consists in using Theorem \ref{thm:equality} in order to deduce properties of $\od(\beta,\delta)$, for $\beta$ large, from properties of $\sd(\delta)$.
However, also the other direction of such relation can be exploited. In particular,
since numerical simulations for $\od(\beta,\delta)$ are easier to implement, these may
be used to deduce numerical properties for $\sd(\delta)$.

\textbf{Plan of the paper.}
The paper is structured as follows: Section \ref{sec:back} is devoted to the discussion 
of the state of the art about the optimal survival threshold problem and the spectral 
drop problem, and to the proof of Proposition~\ref{prop:henrotoudet}. In Section~\ref{sec:beta2infty} we study the 
asymptotic of problem~\eqref{eq:def_od} as $\beta\rightarrow \infty$ and prove 
Theorem~\ref{thm:equality}  and Corollary \ref{coro:intersectsboundary} (beyond a suitable generalization of Lemma 
\ref{lem:intro_beta2infty}). In Section~\ref{sec:asymptspecrtaldrop} we first study the link between $\alpha$-symmetrization, relative isoperimetric inequalities and the principal eigenvalue of the mixed Dirichlet-Neumann problem: this allows us to prove Theorem~\ref{thm:main_polythope}. Then, we study the asymptotics as $\delta \rightarrow 0$ for problem~\eqref{eq:def_sd}, and complete the proof of Theorem~\ref{thm:main_small} and Corollary \ref{cor:post_small}.

\textbf{Notation.}
In this paper we will use the following notation.
\begin{itemize}
\item
\(
\omega_N = |B_1|, \; N\omega_N=\Hcal^{N-1}(\partial B_1)
\)
where, as usual, $|\cdot|$ denotes the $N$-dimensional Lebesgue measure,
$\Hcal^{N-1}(\cdot)$ denotes the $(N-1)$-dimensional Hausdorff measure, and
$B_R\subset\R^N$ is the ball of radius $R$. 
\item
$B_R^+=B_R\cap
\R^N_+=B_R\cap\{x_N>0\}$, where $\R^N\ni x = (x',x_N)\in \R^{N-1}\times \R$.
\item
$\ind{D}$ is the piecewise constant function such that
$\ind{D}(x)=1$ if $x\in D$ and $\ind{D}(x)=0$ elsewhere.
\end{itemize}

\section{Preliminaries and background}\label{sec:back}

\subsection{The optimal survival threshold problem}\label{sec:opt_thresh}

Our main motivation for studying the optimal design problem $\od(\beta,\delta)$ 
comes from its connection with the optimal spatial arrangement of
favourable and unfavourable regions for a species to survive.

A classical model for spatial dispersal of a population in a
heterogenous environment is the reaction-diffusion equation
of logistic type introduced by Fisher \cite{fisher} and
Kolmogorov, Petrovsky and Piskunov \cite{kpp}
(see also \cite{MR2191264})
\begin{equation}\label{eq:readiff}
\begin{cases}
 u_t - d\Delta u = m u - u^2 & x\in\Omega,\ t>0\\
 u(x,0) = u_0(x) \ge 0 & x\in\Omega\\
 \partial_\nu u = 0 & x\in\partial\Omega,\ t>0,
\end{cases}
\end{equation}
where $u=u(x,t)\ge0$ is the density of the population in the spatial
region $\Omega$, the Neumann boundary conditions describe
the fact that there is no flux at $\partial \Omega$, $d>0$ is the motility
coefficient of the species, and $m=m(x)$ denotes the intrinsic
growth rate of the population. As explained in \cite{MR2214420}
a widely studied approximation of a heterogeneous habitat
is a  patchwork of differentiated  environments each with a defined
structure, this is the so called ``patch model'', where it is assumed that
the intrinsic growth rate $m$ varies with patches, so that we can
distinguish the favourable zones  $\{ x: m(x)>0\}$
and the hostile ones $\{ x: m(x)<0\}$.
Concerning solutions of \eqref{eq:readiff}, two alternatives may
occur as $t\to +\infty$: either the population undergoes extinction, i.e.
$u(x,t)\to 0$, or it survives, i.e. $u(x,t)$ converges to a nontrivial stationary
solution. Actually, the survival for every nontrivial initial datum is equivalent to
the existence of a nontrivial stationary solution, which in turn is equivalent, as first shown by Skellam in \cite{MR0043440}
(see also \cite{MR1014659, MR2191264}),
to the survival condition
\[
d \lambda(m) < 1,
\]
where $\lambda(m)$ is defined in \eqref{eq:def_lambda(m)}. This condition is
particularly significant when $\lambda(m)>0$, or equivalently when $\int_\Om m<0$ and $m>0$ in a set of positive measure.
In this situation, $\lambda(m)$ acts as a survival threshold for the population,
and its minimization increases the chances of survival. This provides
the determination of the optimal spatial arrangement of the favorable and unfavorable
patches of the environment for the species to survive, and it is important for 
the conservation of species with limited resources $\int_\Om m$.

Following \cite{MR1014659, ly}, we are led
to consider the following optimization problem:
\[
c(\beta,m_0)=\min_{{\mathcal M}(\beta,m_0)}\lambda(m),
\]
where, for positive $0<m_0<\beta$, the (non-empty) set ${\mathcal M}(\beta,m_0)$ is defined as
\[
\mathcal{M}(\beta,m_0):=\left\{m\in L^{\infty}(\Omega) : -\beta \leq  m \leq 1,\  {m^{+}\not\equiv 0},\ \into m(x)dx\leq -m_0\right\},
\]
{where $m^{+}$ stands for the positive part of the function $m$}.
Notice that the general case $-\underline{m}\le m\le \overline{m}$,
$\into m \le m_0'<0$ can always be reduced to the above one, by factorizing
$\overline{m}$. 
\begin{theorem}[\cite{ly}]\label{thm:base}
If $0<m_0<\beta$ then $c(\beta,m_0)$ is achieved. For any minimizer $m^*$ there exists
$D^*\subset \Omega$ such that
\[
m^* = \ind{D^*}-\beta\ind{\Omega\setminus D^*} = (1+\beta)\ind{D^*} - \beta.
\]
Moreover $D^*$ is an open set, up to zero measure sets: indeed, if $u>0$
is an eigenfunction associated with $c(\beta,m_0)=\la(m^*)$, then $u$ is $C^{1,\alpha}$ for all $\alpha\in(0,1)$,
any level set of $u$ has zero Lebesgue measure, and we can choose
\[
D^{*}=\{x\in \Omega : u(x)>t\}
\]
(for some $t>0$). Furthermore,
\[
(1+\beta)|D^*| - \beta |\Omega| = -m_0 \qquad\text{(i.e. $|D^*| = \dfrac{\beta |\Omega| - m_0}{\beta + 1}$)}.
\]
\end{theorem}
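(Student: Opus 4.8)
The plan is to establish existence of a minimizer by the direct method, and then to extract the bang-bang structure of \emph{every} minimizer by coupling the bathtub (monotone rearrangement) principle with elliptic regularity for the associated eigenfunction.

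\emph{Existence.} I would take a minimizing sequence $m_n\in\Mcal(\beta,m_0)$ together with eigenfunctions $u_n$ realizing $\la(m_n)$, normalized by $\int_\Om m_nu_n^2=1$, so that $\int_\Om|\grad u_n|^2=\la(m_n)\to c(\beta,m_0)$. The first point needing care is an $H^1$-bound on $u_n$: decomposing $u_n=\bar u_n+v_n$ with $\bar u_n$ the mean and $\int_\Om v_n=0$, Poincar\'e--Wirtinger gives $\|v_n\|_{L^2}\le C\|\grad u_n\|_{L^2}$, and expanding $1=\int_\Om m_nu_n^2$ while using $\int_\Om m_n\le-m_0<0$ yields $m_0\bar u_n^2\le C(1+|\bar u_n|)$; hence $u_n$ is bounded in $H^1(\Om)$. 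Passing to subsequences, $m_n\overset{*}{\rightharpoonup}m$ in $L^\infty$, $u_n\rightharpoonup u$ in $H^1$ and $u_n\to u$ in $L^2$ (Rellich, as $\Om$ is bounded Lipschitz); thus $\int_\Om m_nu_n^2\to\int_\Om mu^2=1>0$, which in particular forces $m^+\not\equiv0$, and the bounds $-\beta\le m\le1$, $\int_\Om m\le-m_0$ survive the limit, so $m\in\Mcal(\beta,m_0)$. Weak lower semicontinuity of the Dirichlet energy then gives $\la(m)\le\int_\Om|\grad u|^2\big/\int_\Om mu^2\le\liminf\la(m_n)=c(\beta,m_0)$, so $m$ is optimal.

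\emph{Bang-bang structure.} Now fix any minimizer $m^*$, write $c=c(\beta,m_0)$, and pick an eigenfunction $u^*$; replacing it by $|u^*|$ we may assume $u^*\ge0$, and as a nontrivial nonnegative solution of the Euler--Lagrange equation $-\Delta u^*=c\,m^*u^*$ in $\Om$ (with $\pa_\nu u^*=0$ on $\pa\Om$) and $c\,m^*\in L^\infty$, it satisfies $u^*\in W^{2,p}_{\loc}(\Om)\subset C^{1,\alpha}_{\loc}(\Om)$ for all $p<\infty$, and $u^*>0$ in $\Om$ by Harnack. I would first show $\int_\Om m^*=-m_0$: if the inequality were strict, then $m^*<1$ on a set of positive measure, and raising $m^*$ slightly there keeps the weight admissible while strictly increasing $\int_\Om m^*(u^*)^2$, hence strictly lowering the Rayleigh quotient of $u^*$, contradicting optimality. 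Next, with $g:=(u^*)^2\ge0$ fixed, $m^*$ must \emph{maximize} $m\mapsto\int_\Om mg$ over $\{-\beta\le m\le1,\ \int_\Om m=-m_0\}$: replacing $m^*$ by any maximizer $\tilde m$ would give $\la(\tilde m)\le\int_\Om|\grad u^*|^2\big/\int_\Om\tilde m\,g\le c$, forcing $\int_\Om\tilde m\,g=\int_\Om m^*g$. By the bathtub principle (after the affine substitution $m=-\beta+(1+\beta)\chi$, $0\le\chi\le1$, whose mass is then the constant $\tfrac{\beta|\Om|-m_0}{1+\beta}<|\Om|$) there is a level $t>0$ — positive since $g>0$ a.e. — with $\{g>t\}\subseteq\{m^*=1\}$, $\{g<t\}\subseteq\{m^*=-\beta\}$, and $m^*$ a priori an arbitrary $[-\beta,1]$-valued function on the level set $E:=\{u^*=\sqrt t\}$.

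\emph{The level set $E$ is negligible (the crux), and conclusion.} Suppose $|E|>0$. Since $u^*\in W^{2,p}_{\loc}$ and the gradient of a Sobolev function vanishes a.e.\ on each of its level sets (applied to $u^*$ and to each $\pa_iu^*$), we get $\grad u^*=0$ and $D^2u^*=0$, hence $\Delta u^*=0$, a.e.\ on $E$; the equation then forces $c\,m^*\sqrt t=-\Delta u^*=0$, i.e.\ $m^*=0$ a.e.\ on $E$. Choosing $A\subseteq E$ with $|A|=\tfrac{\beta}{1+\beta}|E|\in(0,|E|)$ and setting $\hat m:=m^*+\ind{A}-\beta\ind{E\setminus A}$, one checks $\hat m\in\Mcal(\beta,m_0)$ and, because $g\equiv t$ on $E$ and $(1+\beta)|A|-\beta|E|=0$, that both $\int_\Om\hat m=\int_\Om m^*=-m_0$ and $\int_\Om\hat m\,g=\int_\Om m^*g$. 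Hence $\la(\hat m)\le\int_\Om|\grad u^*|^2\big/\int_\Om\hat m\,g=c$, so $\hat m$ is also a minimizer and $u^*$ one of its eigenfunctions; subtracting $-\Delta u^*=c\,\hat m\,u^*$ from $-\Delta u^*=c\,m^*u^*$ gives $(\hat m-m^*)u^*=0$ a.e., whence $\hat m=m^*$ a.e.\ since $u^*>0$ — impossible, as $\hat m=1\ne0=m^*$ on the positive-measure set $A$. Therefore $|E|=0$ and $m^*=\ind{D^*}-\beta\ind{\Om\setminus D^*}$ with $D^*:=\{u^*>\sqrt t\}$, which is open by continuity of $u^*$ in $\Om$ and agrees with $D^*$ up to the null set $E$. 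Repeating the vanishing-derivative argument on $\{u^*=s\}$ for any $s>0$ forces $m^*=0$ there, incompatible with $m^*\in\{1,-\beta\}$ a.e., so every positive-height level set of $u^*$ is Lebesgue-null (and $\{u^*=0\}\cap\Om=\emptyset$), which also makes the particular choice of $t$ irrelevant. Finally, substituting the bang-bang form into $\int_\Om m^*=-m_0$ gives $(1+\beta)|D^*|-\beta|\Om|=-m_0$.

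The main obstacle will be the middle of the last paragraph: showing the threshold level set is negligible. This rests on the observation that altering an optimal weight on a set where the corresponding eigenfunction is constant changes neither the admissibility constraints nor the Rayleigh quotient, so such an alteration produces a new minimizer with the \emph{same} eigenfunction; since that common eigenfunction then solves the Euler--Lagrange equation for both weights and is strictly positive, the two weights must coincide, which is the desired contradiction. A secondary, easy-to-overlook point is the $H^1$-bound in the existence step, which crucially uses the negativity of $\int_\Om m_n$.
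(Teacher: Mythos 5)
This statement is cited from Lou--Yanagida [ly] and the paper gives no proof of its own, so there is nothing internal to compare against; instead I will assess your proof on its merits.

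Your argument is correct and, in outline, follows the route one expects for such a bang--bang theorem: direct method for existence, bathtub principle for the rearranged structure, and a device to handle the threshold level set. A few points worth flagging. First, the existence step is carried out carefully, and you are right that the $H^1$ bound hinges on the sign of $\int_\Om m_n$; the expansion $1=\bar u_n^2\int m_n+2\bar u_n\int m_n v_n+\int m_n v_n^2$ gives $m_0\bar u_n^2\le C(1+|\bar u_n|)$ exactly because $\int m_n\le -m_0<0$, and the passage $\int m_n u_n^2\to\int m u^2$ uses both $u_n\to u$ in $L^2$ and $u^2\in L^1$ as the correct test object for $L^\infty$ weak-* convergence. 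Second, the genuinely delicate step is the one you single out: showing $|E|=0$ where $E=\{u^*=\sqrt t\}$. Your mechanism --- gradient vanishing a.e. on level sets of $W^{2,p}_{\loc}$ functions (applied to $u^*$ and then to each $\pa_i u^*$) forces $\Delta u^*=0$ hence $m^*=0$ a.e. on $E$, after which a mass- and $\int mg$-preserving redistribution of $m^*$ on $E$ produces a second optimal weight $\hat m\neq m^*$ sharing the same eigenfunction $u^*$, contradicting $(\hat m-m^*)u^*=0$ with $u^*>0$ --- is exactly the right idea and resolves the chicken-and-egg problem (one cannot invoke $m^*\in\{1,-\beta\}$ a.e. on $E$ before knowing $|E|=0$). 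The arithmetic $|A|=\tfrac{\beta}{1+\beta}|E|$ is the correct split making both $\int\hat m$ and $\int\hat m g$ invariant, using $g\equiv t$ on $E$.

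Two small remarks, neither a gap in the argument. The regularity you prove is $W^{2,p}_{\loc}\subset C^{1,\alpha}_{\loc}(\Om)$; the theorem as stated asserts $C^{1,\alpha}$ without qualification, which on a merely Lipschitz $\Om$ with Neumann conditions is not attainable up to the boundary --- presumably [ly] works with a smoother box, and in any case only interior regularity is used in your level-set argument, so nothing is lost. Also, for the bathtub step one needs the target mass $M=\tfrac{\beta|\Om|-m_0}{1+\beta}$ to lie strictly between $0$ and $|\Om|$; you check the upper bound, and the lower bound $M>0$ is equivalent to $m_0<\beta|\Om|$, which is the correct non-emptiness condition for $\Mcal(\beta,m_0)$ (the paper's hypothesis $m_0<\beta$ tacitly presupposes this, e.g. under a normalization $|\Om|\ge1$). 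Worth a one-line acknowledgement, but it does not affect your proof.
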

This suggests to define, for $\beta>0$ and $0<\delta<\dfrac{\beta|\Om|}{\beta+1}$, the
class of weights
\[
\mathcal{N}(\beta,\delta):=\left\{m\in L^{\infty}(\Omega) :
m = (1+\beta)\ind{D} - \beta|\Omega|,\ D\subset\Omega,\ |D|=\delta\right\},
\]
so that, recalling \eqref{eq:def_od},
\[
\od(\beta,\delta)=\min_{{\mathcal N}(\beta,\delta)}\lambda(m),
\]
and the minimization can be equivalently performed among measurable, quasi-open or open sets $D$.
As we have prescribed $\delta$ in the definition  of $\mathcal{N}$,
$\od(\beta,\delta)$ is achieved by
a potential $m$ satisfying $\into m=(1+\beta)\delta-\beta|\Om|$, so that
the following  consequence of Theorem \ref{thm:base} holds.
\begin{corollary}
$\mathcal{N}(\beta,\delta)\subset\mathcal{M}(\beta,\beta(|\Om|-\delta) - \delta)$ and
\[
c(\beta,\beta(|\Om|-\delta) - \delta) = \od \left(\beta,\delta\right).
\]
\end{corollary}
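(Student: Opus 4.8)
The plan is to read the set inclusion off the definitions, and then to obtain the equality of the two minima as a double inequality, using Theorem~\ref{thm:base} to control the minimizers of $c(\beta,m_0)$. Throughout set $m_0 := \beta(|\Om|-\delta)-\delta = \beta|\Om|-(\beta+1)\delta$, which is positive precisely because of the standing assumption $0<\delta<\beta|\Om|/(\beta+1)$. For the inclusion, take any $m\in\mathcal{N}(\beta,\delta)$, i.e. $m=\ind{D}-\beta\ind{\Om\setminus D}=(1+\beta)\ind{D}-\beta$ with $|D|=\delta$ (such $D$ clearly exist, so $\mathcal{N}(\beta,\delta)\neq\emptyset$). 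I check the three conditions defining $\mathcal{M}(\beta,m_0)$: the bound $-\beta\le m\le 1$ is immediate; $m^+=\ind{D}\not\equiv 0$ since $\delta>0$; and
\[
\into m\,dx = |D|-\beta|\Om\setminus D| = \delta-\beta(|\Om|-\delta) = -m_0 \le -m_0 .
\]
Hence $m\in\mathcal{M}(\beta,m_0)$. Since, by Theorem~\ref{thm:base} and the discussion following it, $\od(\beta,\delta)=\min_{\mathcal{N}(\beta,\delta)}\lambda(m)$ while $c(\beta,m_0)=\min_{\mathcal{M}(\beta,m_0)}\lambda(m)$, minimizing over the smaller class shows at once $c(\beta,m_0)\le\od(\beta,\delta)$.

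For the reverse inequality I would apply Theorem~\ref{thm:base} to $c(\beta,m_0)$ (its hypothesis is met for this $m_0$ in the range of $\delta$ under consideration): it is attained at some bang-bang weight $m^*=\ind{D^*}-\beta\ind{\Om\setminus D^*}$ whose favorable region has measure $|D^*| = (\beta|\Om|-m_0)/(\beta+1)$. Substituting $m_0=\beta|\Om|-(\beta+1)\delta$ gives $|D^*|=\delta$, so $m^*\in\mathcal{N}(\beta,\delta)$ and therefore $\od(\beta,\delta)\le\lambda(m^*)=c(\beta,m_0)$. Combining the two inequalities yields $c(\beta,\beta(|\Om|-\delta)-\delta)=\od(\beta,\delta)$.

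There is no real analytic obstacle here: the whole content is bookkeeping, namely keeping track of the affine change of parameters $m_0\leftrightarrow\delta$, given by $m_0=\beta|\Om|-(\beta+1)\delta$, that relates the mass-constrained formulation of $c$ to the volume-constrained formulation of $\od$, together with the structural part of Theorem~\ref{thm:base} — that any optimal weight for $c(\beta,m_0)$ is of bang-bang type and saturates the mass constraint $\into m = -m_0$, hence has favorable patch of measure exactly $\delta$. The only point deserving a moment's care is to make sure the specific $m_0$ produced here falls within the regime for which Theorem~\ref{thm:base} is stated, which is guaranteed by the range $0<\delta<\beta|\Om|/(\beta+1)$ imposed in the definition of $\od(\beta,\delta)$.
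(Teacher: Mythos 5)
Your proof is correct and fills in exactly the argument the paper leaves implicit: the inclusion $\mathcal{N}(\beta,\delta)\subset\mathcal{M}(\beta,m_0)$ gives $c(\beta,m_0)\le\od(\beta,\delta)$, and Theorem~\ref{thm:base} shows $c(\beta,m_0)$ is attained at a bang-bang weight with $|D^*|=(\beta|\Om|-m_0)/(\beta+1)=\delta$, hence in $\mathcal{N}(\beta,\delta)$, giving the reverse inequality. This is the same route the paper indicates in the sentence preceding the corollary (that minimizers of $\od$ saturate the mass constraint $\into m=-m_0$, combined with the structure theorem for $c$), so the approaches coincide.
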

\begin{remark}\label{rem:monot}
The constraint on the measure of the set $D$ in the  optimization problem above can be equivalently taken as $|D|\leq \delta$,
thanks to the monotonicity of the eigenvalue with respect to the weight $m$ (see~\cite[Lemma~2.3]{ly}):
\begin{equation}\label{eq:weight}
m_1\leq m_2 \qquad \Longrightarrow \qquad \la(m_1)\geq \la(m_2).
\end{equation}
Indeed, introducing the  problem
\[
\tilde{d}(\beta,\delta)=\min_{\widetilde {\mathcal N}(\beta,\delta)}\lambda((1+\beta)\ind{D} - \beta),\quad \widetilde {\mathcal N}=\left\{m\in L^{\infty}(\Omega) :
m = (1+\beta)\ind{D} - \beta,\ D\subset\Omega,\ 0<|D|\le\delta\right\},
\]
it is obvious that $\tilde{d}(\beta,\delta)\leq  \od(\beta,\delta)$.
On the other hand,  if $\widetilde D$ is an optimal set
with $|\widetilde D|<\delta$, as $\delta<|\Omega|$ there exists
a set $E\subset \Om\setminus \widetilde D$ with $|E|=\delta-|
\widetilde D|$. Then $|\widetilde{D}\cup E |=\delta$ and
\eqref{eq:weight} yields
\[
\la_{1}((1+\beta)\ind{\widetilde{D}\cup E}-\beta)\leq \la_{1}((1+\beta)\ind{\widetilde{D }}-\beta),
\]
showing that $\tilde{d}(\beta,\delta)= \od(\beta,\delta)$.
\end{remark}
%%%%%%%%
\begin{remark}
Let us observe that a closely related approach to the study of 
\eqref{eq:readiff} leads to define the principal eigenvalue
\[
\gamma(m):= \min \left\{
\dfrac{\int_\Omega |\nabla u|^2\,dx- \into mu^{2}\,dx}{\int_\Omega u^2\,dx} :  
u\in H^1(\Omega),\ u\not\equiv0\right\},
\]
and to conclude that the species $u(x,t)$ survives if and only if $\gamma(m)<0$
(see \cite{MR2214420,haro}). As shown in
\cite[Theorem 13]{MR1796024} (see also \cite[Section 2.2]{llnp}), it turns out that $\gamma(m)$ is also
minimized by a bang-bang weight; in addition, it is possible
to pass from a minimizer  of $\gamma$ to a minimizer of 
$\lambda$ via a change of the coefficients in the definition of the
weight, so that our results also apply in this related context. 
\end{remark}
%%%%%%%%%

Finally, let us mention that the optimization of $\lambda(m)$ has been investigated
also in different, although related, settings: with pointwise constraints
for positive weights with Dirichlet boundary conditions, see \cite[Chapter 9]{henrot}
and references therein; in the framework of composite membranes
\cite{MR572958,MR2473259,MR2421158}; in the  case of the 
$p$-Laplace operator in
\cite{dego}; when analyzing best dispersal strategies in spatially heterogeneous
environments, where also non-local diffusion is allowed \cite{MoPeVe,MR3771424}.

\subsection{The spectral drop problem}\label{sec:mixed}\label{sec:spectraldrop}
First of all we recall some notions that are useful when dealing with optimization problems involving the space $H^1(\Om)$. A more detailed presentation can be found for example in~\cite{hp,bubu} and in~\cite{buve}, for the parts peculiar to the mixed Dirichlet-Neumann setting.
\begin{definition}
Let $E\subset \R^N$ be a measurable set, we define its \emph{capacity} in $\R^N$ as
\[
\cp(E,\R^N):=\inf\left\{\int_{\R^N}|\nabla u|^2+u^2\,dx:u\in H^1(\R^N),\ u\geq 1\mbox{ in a neighborhood of }E\right\}.
\]
We say that a property holds \emph{quasi-everywhere} (q.e.) if it holds at any point $x$, except at most a set of zero capacity.
\end{definition}
Notice that a set can have positive capacity but zero Lebesgue measure, an easy example being a segment in $\R^2$, thus a property can hold a.e. but \emph{not} q.e.. On the other hand, a set of zero capacity has also zero Lebesgue measure.

\begin{definition}
We say that a set $D\subset\R^N$ is \emph{quasi-open} if for all $\eps>0$ there exists an open set $D_\eps\supset D$ such that $\cp(D_\eps\setminus D)\leq \eps$.
We say that a function $f\colon D\rightarrow \R$ is \emph{quasi-continuous} if for all $\eps>0$ there exists an open set $D_\eps\subset D$ such that $\cp(D\setminus D_\eps)\leq \eps$ and the restriction of $f$ to $D_\eps$ is continuous.
\end{definition}

It is standard to see (a classical reference is~\cite{evansgariepy}) that any function $u\in H^1(\Om)$ admits a quasi-continuous representative $\widehat u$, which is unique up to sets of zero capacity. Moreover it can be pointwise characterized as \[
\widehat u(x)=\lim_{r\rightarrow 0}\frac{1}{|B_r|}\int_{B_r(x)}u(y)\,dy.
\]
From now on, we identify any $H^1$ function with its quasi-continuous representative. Notice that, given $u\in H^1(\R^N)$, then the superlevel set $\{u>0\}$ is quasi-open and vice-versa for any quasi-open set $D$, there is a function $u \in H^1(\R^N)$ such that $D=\{u>0\}$ up to sets of zero capacity (see~\cite[Chapter 3]{hp}).

We are now in position to introduce two Sobolev spaces suitable for dealing with mixed Dirichlet-Neumann eigenvalues, following~\cite{buve}.
\begin{definition}
Let $\Om\subset\R^N$ be a Lipschitz domain and $D\subset\Om$ be a quasi-open set. We define two closed linear subspaces of $H^1(\Om)$ as 
\[
\begin{split}
H^1_0(D,\Om)&:=\left\{u\in H^1(\Om):u=0\;\mbox{q.e. in }\Om\setminus D\right\},\\
\widetilde H^1_0(D,\Om)&:=\left\{u\in H^1(\Om):u=0\;\mbox{a.e. in }\Om\setminus D\right\}
\end{split}
\]
(in particular, the former is closed because, according 
to~\cite[Proposition~3.3.33]{hp}, if $f_n\to f$ in $H^1(\Om)$ then $f_n\rightarrow f$ 
q.e., up to a subsequence). 
\end{definition}
We stress that, if $D\subset \Om$ is open and Lipschitz, then the spaces $H^1_0(D,\Om)$ and $\widetilde H^1_0(D,\Om)$ coincide, otherwise there is only the inclusion $H^1_0(D,\Om)\subset \widetilde H^1_0(D,\Om)$. In order to visualize that the inclusion can actually be strict one can consider $\Om=\R^N$ and $D=B_1(0)\setminus [0,1]\times \{0\} $.
Moreover, if $\Om=\R^N$ and $D$ is open and Lipschitz, then $\widetilde H^1_0(D,\Om)=H^1_0(D,\R^N)=H^1_0(D)$, that is the completion of $C^\infty_c(D)$ with respect to the $\|\cdot\|_{H^1}$ norm, which is the usual definition of Sobolev space.

First of all we need to specify what the meaning of solving a PDE in these spaces actually is.
Given $\Om\subset \R^N$ a Lipschitz domain and $D\subset \Om$ quasi-open, we say that, for any $f\in L^2(\Om)$, $u$ solves the problem
\begin{equation}\label{eq:defpde}
-\Delta u=f,\qquad u=0\;\mbox{on }\partial D\cap \Om,\qquad \partial_\nu u=0\;\mbox{on }\partial D\cap \partial \Om,
\end{equation}
if $u\in H^1_0(D,\Om)$ and \[
\int_{\Om}\nabla u\cdot \nabla v\,dx=\int_{\Om}fv\,dx,\qquad \forall\;v\in H^1_0(D,\Om).
\]
Then, as soon as $\Om\subset\R^N$ is a bounded Lipschitz domain and $D\subset\Om$ is quasi-open with $|D|<|\Om|$, the inclusion $H^1_0(D,\Om)\hookrightarrow L^2(\Om)$ is compact.
Thus, for all $f\in L^2(\Om)$, there is a unique minimizer $w_f\in H^1_0(D,\Om)$ for the functional\[
H^1_0(D,\Om)\ni v\mapsto J_f(v):=\frac{1}{2}\int_{\Om}|\nabla v|^2\,dx-\int_{\Om}vf\,dx,
\]
and the Euler-Lagrange equation for $w_f$ corresponds to~\eqref{eq:defpde}.
The special case $f=1$ is very important. We denote by $w_1=w_D\in H^1_0(D,\Om)$ the minimizer of the functional, \[
H^1_0(D,\Om)\ni v\mapsto \frac12\int_{\Om}|\nabla v|^2\,dx-\int_{\Om}v\,dx,
\]
which is usually called \emph{torsion function} and solves the PDE
\begin{equation}\label{eq:torsione}
-\Delta w_D=1,\qquad w_D=0\;\mbox{on }\partial D\cap \Om,\qquad \partial_\nu w_D=0\;\mbox{on }\partial D\cap \partial \Om.
\end{equation}
It is then possible to prove (\cite[Proposition 2.7, (5)]{buve}) that $H^1_0(D,\Om)=H^1_0(\{w_D>0\},\Om)$.

The torsion function allows us to define a notion of convergence of sets, see~\cite[Section 3]{buve}.
\begin{definition}\label{de:gammaconv}
Let $\Om\subset \R^N$ be a bounded Lipschitz domain and $D_n,D\subset \Om$ be quasi-open sets.
We say that $D_n$ \emph{$\gamma$-converges} to $D$ if $w_{D_n}\rightarrow w_D$ strongly in $L^2(\Om)$ (see \eqref{eq:torsione}).
We say that $D_n$ \emph{weakly $\gamma$-converges} to $D$ if $w_{D_n}\rightarrow w$ strongly in $L^2(\Om)$ for some function $w\in H^1(\Om)$ and $D=\{w>0\}$ q.e..
\end{definition}
As soon as the inclusion $H^1_0(D,\Om)\hookrightarrow L^2(\Om)$ is compact, then also the first eigenvalue of the mixed Dirichlet-Neumann Laplacian is well defined (see~\cite[Remark 2.2]{buve}):\[
\mu(D,\Om)=\min{\left\{\int_{\Om}|\nabla v|^2\,dx:v\in H^1_0(D,\Om),\;\int_{\Om}v^2\,dx=1\right\}},
\]
and it is finite and strictly positive. The main properties of the first eigenvalue for the mixed Dirichlet-Neumann Laplacian are the same as in the case of the Dirichlet-Laplacian:
\begin{itemize}
\item The first eigenfunction (normalized in $L^2$) is denoted by $u$ and is (chosen) non-negative, therefore $\mu(D,\Om)$ is a simple eigenvalue if $D$ is connected.
\item The eigenvalue is monotone with respect to set inclusion: if (q.e.) $D_1\subset D_2\subset\Om$, then $\mu(D_2,\Om)\leq \mu(D_1,\Om)$. This follows from the inclusion of the Sobolev spaces $H^1_0(D_1,\Om)\supset H^1_0(D_2,\Om)$.
\item If a quasi-open set $D$ is the disjoint union of $D_1,D_2$ (that is, $\cp(D_1\cap D_2)=0$ and $D=D_1\cup D_2$), then 
\[
\mu(D,\Om)=\min\Big\{\mu(D_1,\Om),\mu(D_2,\Om)\Big\}.
\]
\end{itemize}
The reason of the importance of the (weak-)$\gamma$-convergence is that eigenvalues of the mixed Dirichlet-Neumann Laplacian and the measure functional are lower-semicontinuous with respect to it, see~\cite[Proposition 3.12]{buve}.

In the case of $\Om=\R^N$ and $D$ quasi-open, the (less common) Sobolev-like spaces $\widetilde H^1_0$ have been treated in~\cite[Section 2]{bmpv}, but we recall here the main features since we are working in a slightly different setting.
We need first to give meaning to some of the quantities above also in the Sobolev-like space $\widetilde H^1_0(D,\Om)$. We say (following~\cite[Section 2]{bmpv}) that, for any $f\in L^2(\Om)$, $\widetilde u$ solves in $\widetilde H^1_0(D,\Om)$ the problem 
\[
-\Delta \widetilde u=f,\qquad \widetilde u=0\;\mbox{on }\partial D\cap \Om,\qquad \partial_\nu \widetilde u=0\;\mbox{on }\partial D\cap \partial \Om,
\]
if $\widetilde u\in \widetilde H^1_0(D,\Om)$ and \[
\int_{\Om}\nabla \widetilde u\cdot \nabla v\,dx=\int_{\Om}fv\,dx,\qquad \forall\;v\in \widetilde H^1_0(D,\Om).
\]
As before, if $\Om\subset \R^N$ is a bounded Lipschitz domain and $D\subset\Om$ is 
quasi-open, with $|D|<|\Om|$, the inclusion $\widetilde H^1_0(D,\Om)\hookrightarrow L^2(\Om)$ is compact. Thus we can define the (Lebesgue) torsion function $\widetilde w_D\in \widetilde H^1_0(D,\Om)$, which is the unique minimizer of the functional
\[
\widetilde H^1_0(D,\Om)\ni v\mapsto \frac12\int_{\Om}|\nabla v|^2\,dx-\int_{\Om}v\,dx.
\]

We note that in the framework of Sobolev-like spaces $\widetilde H^1_0$ a weak maximum principle still holds, and $D_1\subset D_2$ a.e. implies that $\widetilde w_{D_1}\leq \widetilde w_{D_2}$ a.e. (see also the proof of Lemma \ref{le:spaceHtilde}).

In the following we need a slight generalization of~\cite[Lemme~3.3.30]{hp}.
\begin{lemma}\label{le:omegaDchiuso}
Let $A\subset \R^N$ be an open set and $w$ be a quasi-continuous function such that $w\geq 0$ a.e. in $A$, then $w\geq 0$ q.e. in $A$.
Moreover, if $w=0$ a.e. in $A$ then $w=0$ q.e. in $A$.
\end{lemma}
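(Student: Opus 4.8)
\textbf{Proof proposal for Lemma \ref{le:omegaDchiuso}.}
The statement is a purely potential-theoretic fact: nonnegativity (resp. vanishing) a.e.\ upgrades to q.e.\ for quasi-continuous functions, provided one restricts attention to an open set $A$. The plan is to reduce to the cited result \cite[Lemme~3.3.30]{hp}, which gives exactly this conclusion in the global case $A=\R^N$, by a localization/truncation argument that does not destroy quasi-continuity.

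First I would recall that a quasi-continuous function $w$ which is a.e.\ equal to a function in $H^1_{\mathrm{loc}}$ (or more simply: we may assume $w$ is the quasi-continuous representative of some Sobolev function, which is the only situation in which we apply the lemma) has the property that $\{w<0\}$ is quasi-open. The first step is therefore to fix a point $x_0\in A$ and a radius $r>0$ with $\overline{B_r(x_0)}\subset A$, pick a cutoff $\varphi\in C^\infty_c(A)$ with $0\le\varphi\le 1$ and $\varphi\equiv 1$ on $B_r(x_0)$, and consider $v:=\varphi\,w^-$ (where $w^-=\max\{-w,0\}$). Since $w\ge 0$ a.e.\ in $A$ and $\varphi$ is supported in $A$, we get $v=0$ a.e.\ in $\R^N$; moreover $v$ is quasi-continuous on all of $\R^N$ because products of quasi-continuous functions are quasi-continuous and $w^-$ is quasi-continuous (the negative part of a quasi-continuous function is quasi-continuous). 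Now apply \cite[Lemme~3.3.30]{hp} with $A$ replaced by $\R^N$: a quasi-continuous function that vanishes a.e.\ on $\R^N$ vanishes q.e.\ on $\R^N$. Hence $v=0$ q.e., and in particular $w^-=0$ q.e.\ on $B_r(x_0)$, i.e.\ $w\ge 0$ q.e.\ there. Covering $A$ by countably many such balls $B_{r_j}(x_j)$ and using that a countable union of sets of zero capacity has zero capacity, we conclude $w\ge 0$ q.e.\ in $A$.

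The second assertion follows by applying the first to both $w$ and $-w$: if $w=0$ a.e.\ in $A$ then $w\ge 0$ q.e.\ in $A$ and $w\le 0$ q.e.\ in $A$, whence $w=0$ q.e.\ in $A$.

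The only genuine subtlety—and thus the step I would be most careful about—is verifying that the truncation/multiplication operations preserve quasi-continuity, and that the cited lemma of Henrot--Pierre is indeed stated for (or trivially extends to) the case in which the relevant set is all of $\R^N$ rather than a proper open subset. Both are standard (see \cite{evansgariepy} for the stability of quasi-continuity under $\min$, $\max$, and multiplication by smooth functions), so the argument is short; the localization by an interior cutoff is exactly what bridges the gap between the ``$A=\R^N$'' version in \cite{hp} and the local statement we need here.
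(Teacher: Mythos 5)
Your proof is correct and follows essentially the same route as the paper's: localize via a smooth cutoff supported in $A$, invoke \cite[Lemme~3.3.30]{hp} on all of $\R^N$, then exhaust $A$ by balls. The only cosmetic differences are that you work with $\varphi\,w^-$ (reducing to the vanishing case) rather than $\varphi\,w$ directly, and that you explicitly invoke a countable subcover before summing capacities, which is the cleanest way to justify the exhaustion step; neither changes the substance of the argument.
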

\begin{proof}
For any open set $E\subset \overline E\subset A$, we can find a smooth cutoff function $\varphi\in C^\infty_c(\R^N,[0,1])$ such that $\varphi=1$ in $E$ and $\varphi=0$ in $\R^N\setminus A$. Then $w\varphi\colon \R^N\rightarrow \R$ is quasi-continuous, $w\varphi\geq 0$ a.e. in $\R^N$ and by~\cite[Lemme~3.3.30]{hp}, we infer that $w\varphi\geq 0$ q.e. in $\R^N$, thus $w\geq 0$ q.e. in $E$. Moreover, since $A$ is an open set, we have that $w\geq 0$ q.e. in the whole $A$ by an exhaustion procedure: since for every $x_{0}\in A$ there exists $B_{r(x_0)}(x_{0})$ such that 
$\overline{B_{r(x_0)}(x_{0})}\subset A$ and $w\geq 0$ quasi-everywhere in 
$B_{r(x_0)}(x_{0})$, then $w\geq 0$ quasi-everywhere in $\bigcup_{x_0\in A} B_{r(x_0)}(x_0)=A$.
The second part of the statement follows simply by applying the first part to $\pm w$.
\end{proof}
The next lemma provides more insight in the relation between the spaces $H^1_0$ and $\widetilde H^1_0$. This is a well-known property, and the proof can be done as in~\cite[Proposition 4.7]{deve}.
\begin{lemma}\label{le:spaceHtilde}
Let $\Om\subset\R^N$ be a Lipschitz domain, $D\subset \Om$ be quasi-open. The set $\omega_D:=\{\widetilde w_D>0\}$ is quasi-open, contained a.e. in $D$, and such that $\widetilde H^1_0(D,\Om)=\widetilde H^1_0(\omega_D,\Om)=H^1_0(\omega_D,\Om)$. Moreover $\omega_D$ is contained q.e. in $\overline D$.
\end{lemma}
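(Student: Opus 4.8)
The plan is to follow the strategy of \cite[Proposition 4.7]{deve}, adapting it to the slightly different setting of a bounded Lipschitz box $\Om$ with mixed Dirichlet-Neumann conditions. First I would establish the easy inclusions and the a.e. containment. Since $\widetilde w_D\in\widetilde H^1_0(D,\Om)$, we have $\widetilde w_D=0$ a.e.\ in $\Om\setminus D$, hence $\omega_D=\{\widetilde w_D>0\}$ is contained a.e.\ in $D$; moreover $\omega_D$ is quasi-open because it is the (strict) superlevel set of the quasi-continuous function $\widetilde w_D$. The inclusion $H^1_0(\omega_D,\Om)\subset\widetilde H^1_0(\omega_D,\Om)$ is immediate from the definitions, and $\widetilde H^1_0(\omega_D,\Om)\subset\widetilde H^1_0(D,\Om)$ follows from $\omega_D\subset D$ a.e. The crux is therefore the reverse chain $\widetilde H^1_0(D,\Om)\subset H^1_0(\omega_D,\Om)$, together with the final assertion $\omega_D\subset\overline D$ q.e.

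For $\widetilde H^1_0(D,\Om)\subset H^1_0(\omega_D,\Om)$, I would first show that $\widetilde w_D$ itself belongs to $H^1_0(\omega_D,\Om)$, i.e.\ that $\widetilde w_D=0$ q.e.\ on $\Om\setminus\omega_D$; but $\Om\setminus\omega_D=\{\widetilde w_D\le 0\}$, and since $\widetilde w_D\ge 0$ a.e.\ (by the weak maximum principle for $\widetilde H^1_0$ applied to the torsion problem $-\Delta\widetilde w_D=1$), Lemma \ref{le:omegaDchiuso} gives $\widetilde w_D\ge 0$ q.e.; hence $\widetilde w_D=0$ q.e.\ on $\{\widetilde w_D\le 0\}$, as needed. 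Next, for general $v\in\widetilde H^1_0(D,\Om)$, the standard truncation argument applies: since $v$ is determined by its quasi-continuous representative up to sets of zero capacity and vanishes a.e.\ outside $D$, one shows that $v=0$ q.e.\ on $\Om\setminus\omega_D$ by comparing $|v|$ with (multiples of) $\widetilde w_D$. Concretely, for $t>0$ the function $(|v|-t\,\widetilde w_D)^+$ is supported, a.e., in $\{\widetilde w_D=0\}=\Om\setminus\omega_D$ intersected with $D$, which has zero Lebesgue measure by the a.e.\ inclusion $\omega_D\subset D$; hence $(|v|-t\,\widetilde w_D)^+=0$ a.e., so $|v|\le t\,\widetilde w_D$ a.e.\ and, by Lemma \ref{le:omegaDchiuso} again, q.e.; letting $t\to 0$ through a sequence gives $v=0$ q.e.\ on $\{\widetilde w_D=0\}\supset\Om\setminus\omega_D$. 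This yields $v\in H^1_0(\omega_D,\Om)$ and closes the chain of equalities $\widetilde H^1_0(D,\Om)=\widetilde H^1_0(\omega_D,\Om)=H^1_0(\omega_D,\Om)$.

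Finally, for the q.e.\ containment $\omega_D\subset\overline D$, I would argue on the open complement $A:=\R^N\setminus\overline D$ (intersected with $\Om$, or extending by zero): on $A$ we have $\widetilde w_D=0$ a.e.\ (since $A\cap D=\emptyset$, so $A\subset\Om\setminus D$ up to the null set where $\omega_D$ escapes $D$), hence by the second part of Lemma \ref{le:omegaDchiuso} $\widetilde w_D=0$ q.e.\ on $A$; therefore $\omega_D=\{\widetilde w_D>0\}$ meets $A$ only on a set of zero capacity, i.e.\ $\omega_D\subset\overline D$ q.e.

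The main obstacle, and the place where the argument must be handled with care, is the passage from a.e.\ statements to q.e.\ statements: the whole point of introducing $\omega_D$ is that $\widetilde H^1_0(D,\Om)$ only controls functions a.e., whereas $H^1_0(\omega_D,\Om)$ requires quasi-everywhere vanishing. Lemma \ref{le:omegaDchiuso} is exactly the bridge, but it must be invoked on the correct open sets, and one has to be slightly careful that the truncated functions $(|v|-t\widetilde w_D)^+$ remain in $H^1(\Om)$ and are quasi-continuous, so that the comparison is legitimate; the Lipschitz regularity of $\partial\Om$ and the compactness of the embedding $\widetilde H^1_0(D,\Om)\hookrightarrow L^2(\Om)$ (already noted in the excerpt) guarantee that $\widetilde w_D$ is well defined and that all these manipulations stay inside $H^1(\Om)$.
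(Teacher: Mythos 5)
The overall structure you propose is right, and several pieces are correct: the quasi-openness of $\omega_D$, the a.e.\ containment $\omega_D\subset D$, the chain of ``easy'' inclusions $H^1_0(\omega_D,\Om)\subset\widetilde H^1_0(\omega_D,\Om)\subset\widetilde H^1_0(D,\Om)$, the fact that $\widetilde w_D$ itself lies in $H^1_0(\omega_D,\Om)$, and the argument that $\omega_D\subset\overline D$ q.e.\ via Lemma~\ref{le:omegaDchiuso} applied on the open set $\Om\setminus\overline D$. (The paper gives no details and simply invokes~\cite[Prop.~4.7]{deve}, so you are reconstructing the argument.)

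The flaw is in the crucial step, the comparison used to show $\widetilde H^1_0(D,\Om)\subset H^1_0(\omega_D,\Om)$ for a general $v$. You claim that $(|v|-t\widetilde w_D)^+$ is supported a.e.\ in $\{\widetilde w_D=0\}\cap D$, hence is zero a.e., hence $|v|\le t\widetilde w_D$ a.e.\ for every $t>0$. This is false: the set $\{|v|>t\widetilde w_D\}$ is in general a large portion of $\omega_D$, not a subset of $\{\widetilde w_D=0\}$. Concretely, take $D=B_1$, $\widetilde w_D(x)=(1-|x|^2)/(2N)$, and $v$ the first Dirichlet eigenfunction of $B_1$: then $\{\widetilde w_D=0\}\cap D=\emptyset$, yet $(|v|-t\widetilde w_D)^+>0$ on most of $B_1$ when $t$ is small. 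Moreover your stated conclusion ``$|v|\le t\widetilde w_D$ a.e.\ for all $t>0$'' would force $v=0$ a.e.\ by letting $t\to0$, which is absurd. (The assertion that $|D\setminus\omega_D|=0$ ``by the a.e.\ inclusion $\omega_D\subset D$'' is also a non sequitur: that inclusion controls $|\omega_D\setminus D|$, not $|D\setminus\omega_D|$.)

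The correct mechanism is not to compare an arbitrary $v\in\widetilde H^1_0(D,\Om)$ with $t\widetilde w_D$, but to compare \emph{solutions of the PDE}. For $f\in L^\infty(\Om)$ with $f\ge0$, the weak maximum principle in $\widetilde H^1_0(D,\Om)$ gives $0\le\widetilde w_f\le\|f\|_{L^\infty}\widetilde w_D$ a.e.\ (since $\|f\|_{L^\infty}\widetilde w_D$ is a supersolution), and Lemma~\ref{le:omegaDchiuso} upgrades this to q.e., so $\widetilde w_f=0$ q.e.\ on $\{\widetilde w_D=0\}$, i.e.\ $\widetilde w_f\in H^1_0(\omega_D,\Om)$. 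Splitting $f=f^+-f^-$, truncating, and using linearity, continuity of $f\mapsto\widetilde w_f$ and the fact that $H^1_0(\omega_D,\Om)$ is a closed subspace, the same holds for all $f\in L^2(\Om)$. Finally, the range of the resolvent $f\mapsto\widetilde w_f$ is dense in $\widetilde H^1_0(D,\Om)$ (e.g., $\lambda(\lambda+A)^{-1}v\to v$ in $H^1$ as $\lambda\to\infty$, where $A$ is the associated self-adjoint operator), so the closedness of $H^1_0(\omega_D,\Om)$ gives $\widetilde H^1_0(D,\Om)\subset H^1_0(\omega_D,\Om)$. This is the step your proposal must be rewritten around.
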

We summarize in the next theorem some results for the spectral drop problem, obtained  in~\cite[Theorem 4.1, Remark 4.4]{buve}. 
\begin{theorem}[Buttazzo-Velichkov]\label{thm:buve}
Let $\Om\subset\R^N$ be a bounded Lipschitz domain, and let $0<\delta<|\Om|$.
Then $\sd(\delta)$ is achieved (recall~\eqref{eq:def_sd}), and any optimal set
$D^*$ is connected, and thus $\mu(D^*,\Om)$ is simple. Furthermore
the boundary of $D^*$ intersects $\partial \Om$ orthogonally, if the intersection lies on a regular point of $\partial \Om$.
\end{theorem}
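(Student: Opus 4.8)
The statement bundles an existence result, two structural facts about minimizers, and a contact-angle condition; I would treat these in turn.

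\emph{Existence} follows from the direct method combined with the (weak) $\gamma$-convergence tools recalled above. I would take a minimizing sequence $D_n$ of quasi-open subsets of $\Om$ with $|D_n|=\delta$. The torsion functions $w_{D_n}$ of \eqref{eq:torsione} are uniformly bounded in $H^1(\Om)$ (test the equation with $w_{D_n}$ itself and use an $L^\infty$/Faber--Krahn bound depending only on $\delta$ and $\Om$), so along a subsequence they converge strongly in $L^2(\Om)$ to some $w$, and $D_n$ weakly $\gamma$-converges to $D:=\{w>0\}$ in the sense of Definition~\ref{de:gammaconv}. By the lower semicontinuity of $\mu(\cdot,\Om)$ and of the Lebesgue measure along weak $\gamma$-convergence one gets $\mu(D,\Om)\le\sd(\delta)$ and $|D|\le\delta$; if $|D|<\delta$, I would enlarge $D$ to a quasi-open set of measure exactly $\delta$, which by monotonicity of $\mu$ under inclusion does not increase the eigenvalue. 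Hence $\sd(\delta)$ is attained.

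\emph{Connectedness and simplicity.} Suppose an optimal $D^*$ were a capacitary-disjoint union $D_1\sqcup D_2$ with $|D_1|,|D_2|>0$. Then $\mu(D^*,\Om)=\min\{\mu(D_1,\Om),\mu(D_2,\Om)\}$; discarding the component that does not realise the minimum and using its measure to enlarge the other one (there is room, since $|\Om\setminus D^*|>0$) produces an admissible set whose eigenvalue is \emph{strictly} smaller, a contradiction. The strict monotonicity of $\mu$ under capacitary inclusions needed here is a consequence of the strong maximum principle for the first eigenfunction: it is q.e. positive on a connected quasi-open set, so removing a set of positive capacity from it strictly raises $\mu$. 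Once $D^*$ is connected, simplicity of $\mu(D^*,\Om)$ is immediate, the first eigenfunction being non-negative and hence q.e. positive on $D^*$.

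\emph{Orthogonal contact}, which I expect to be the main obstacle, is a free-boundary regularity statement up to the fixed boundary $\partial\Om$. I would first derive, from inner and outer perturbations of $D^*$, the optimality conditions for the first eigenfunction $u$: the overdetermined relation $|\nabla u|\equiv\mathrm{const}$ on the free part $\Gamma:=\partial D^*\cap\Om$, together with $\partial_\nu u=0$ on $\partial D^*\cap\partial\Om$. Near a regular point $x_0\in\partial\Om$ lying on $\Gamma\cap\partial\Om$, I would flatten $\partial\Om$ and extend $u$ by even reflection across it; the Neumann condition turns the reflected function into a solution of a one-phase Bernoulli-type problem in a full neighbourhood of $x_0$. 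A blow-up at $x_0$ then produces a homogeneous global solution on a wedge symmetric about the reflection hyperplane, and the constancy of $|\nabla u|$ along the blown-up free boundary is compatible with homogeneity only if the opening angle of $\Gamma$ against $\partial\Om$ is $\pi/2$, since the homogeneous harmonic profile of any other wedge has gradient vanishing (obtuse angle) or blowing up (acute angle) at the vertex. The technical core here is the free-boundary regularity theory for the mixed Dirichlet--Neumann problem — Lipschitz continuity and non-degeneracy of $u$, density estimates, and classification of blow-ups — developed precisely for this purpose in~\cite{buve}.
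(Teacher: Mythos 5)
The paper does not prove this statement: it is quoted verbatim from \cite[Theorem~4.1 and Remark~4.4]{buve}, so there is no in-paper argument to compare against. Your sketch follows what is, as far as I can tell, the route taken there: direct method with weak $\gamma$-convergence for existence, a ``discard-and-enlarge'' argument for connectedness, and free-boundary regularity for the contact angle. The existence step is in order, and the orthogonality step you correctly identify as the hard part; the reflection/blow-up heuristic is the right picture, but as written it is an appeal to the Lipschitz regularity, non-degeneracy and blow-up classification developed in \cite{buve}, not an independent proof.

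There is a genuine gap in the connectedness argument. The strict monotonicity you invoke --- removing a set of positive capacity strictly raises $\mu$ --- holds for a \emph{connected} quasi-open set (so that the first eigenfunction is q.e.\ positive and cannot coincide with one vanishing on the removed set), but ``using the freed measure to enlarge the other component'' does not automatically produce a connected set. Two things can go wrong: the retained component $D_1$ may itself be disconnected, and, even if it is not, the added mass may be grafted on disjointly, in which case $\mu(D_1\cup E)=\min\{\mu(D_1),\mu(E)\}$ need not be strictly smaller than $\mu(D_1)$. A correct version proceeds as follows: take a first eigenfunction $u^*$ of $D^*$, set $\omega=\{u^*>0\}\subset D^*$ q.e., restrict to a single connected component $\omega_1$ of $\omega$ (which still realises $\sd(\delta)$ since $u^*$ restricted there is an eigenfunction with the same eigenvalue and $|\omega_1|\le\delta$), and enlarge $\omega_1$ by $B_\rho(x_0)\cap\Om$ with $x_0$ a Lebesgue-density point of $\omega_1$ and $\rho$ chosen so that $|\omega_1\cup(B_\rho(x_0)\cap\Om)|=\delta$. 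The overlap has positive measure, hence positive capacity, so the union is connected, and now your strict monotonicity applies to give $\mu<\sd(\delta)$, a contradiction.
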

\begin{remark}\label{rem:minoreouguale}
Thanks to the monotonicity of the eigenvalues, it is possible to see, as in Remark~\ref{rem:monot}, that it is equivalent to consider problem~\eqref{eq:def_sd} with the constraint on the measure $|D|\leq \delta$ instead of the equality constraint, that is,\[
\sd(\delta)=\min{\left\{\mu(D,\Om):D\subset \Om,\;\mbox{quasi-open, }|D|\leq\delta\right\}}
\]
\end{remark}

Thanks also to Lemma~\ref{le:spaceHtilde}, we have this crucial corollary.
\begin{corollary}\label{cor:eqmeascap}
Let $\Om\subset\R^N$ be a Lipschitz domain, $\delta\in (0,|\Om|)$, then
\begin{equation}\label{eq:equiv}
\sd(\delta)=\min{\left\{\mu(\omega_D,\Om):D\subset \Om,\;\mbox{quasi-open, }|D|\leq\delta\right\}}.
\end{equation}
\end{corollary}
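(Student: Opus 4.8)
The statement to prove is Corollary~\ref{cor:eqmeascap}, which asserts
\[
\sd(\delta)=\min\left\{\mu(\omega_D,\Om):D\subset\Om\text{ quasi-open},\ |D|\leq\delta\right\},
\]
where $\omega_D=\{\widetilde w_D>0\}$ as in Lemma~\ref{le:spaceHtilde}. The plan is to prove the two inequalities separately, using the characterization of $\sd(\delta)$ with the relaxed constraint $|D|\leq\delta$ from Remark~\ref{rem:minoreouguale} on one side, and the properties of $\omega_D$ collected in Lemma~\ref{le:spaceHtilde} on the other.

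First I would prove ``$\geq$'' (the right-hand side is dominated by $\sd(\delta)$). Given any quasi-open $D\subset\Om$ with $|D|\leq\delta$, Lemma~\ref{le:spaceHtilde} gives that $\omega_D$ is quasi-open, contained a.e. in $D$, hence $|\omega_D|\leq|D|\leq\delta$; moreover $H^1_0(\omega_D,\Om)=\widetilde H^1_0(D,\Om)$. In particular $\omega_D$ is an admissible competitor for the relaxed minimization, so $\mu(\omega_D,\Om)$ is one of the values over which the right-hand side minimizes, and since the right-hand side is a minimum over a nonempty family it is automatically $\leq$ each such value; but more to the point, taking $D$ to be an actual optimal set $D^*$ for $\sd(\delta)$ and using $H^1_0(D^*,\Om)\subset\widetilde H^1_0(D^*,\Om)=H^1_0(\omega_{D^*},\Om)$ together with monotonicity of $\mu$ with respect to the Sobolev space (larger space, smaller eigenvalue) yields $\mu(\omega_{D^*},\Om)\leq\mu(D^*,\Om)=\sd(\delta)$. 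Hence the right-hand side, being a minimum, is $\leq\sd(\delta)$.

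Next, ``$\leq$'': for any admissible $D$ with $|D|\leq\delta$, the set $\omega_D$ is itself quasi-open with $|\omega_D|\leq\delta$, so by the relaxed form of $\sd(\delta)$ in Remark~\ref{rem:minoreouguale} we have $\sd(\delta)\leq\mu(\omega_D,\Om)$. Taking the minimum over all such $D$ gives $\sd(\delta)\leq\min_D\mu(\omega_D,\Om)$, i.e. $\sd(\delta)$ is a lower bound for the right-hand side. Combining the two inequalities gives equality, and the fact that the right-hand side is attained follows because it is attained at $\omega_{D^*}$ for $D^*$ optimal for $\sd(\delta)$.

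The main point — really the only non-formal ingredient — is Lemma~\ref{le:spaceHtilde}, namely that passing from $D$ to the positivity set $\omega_D$ of the Lebesgue torsion function does not change the relevant Sobolev space, turning the a.e.-vanishing condition defining $\widetilde H^1_0(D,\Om)$ into the q.e.-vanishing condition defining $H^1_0(\omega_D,\Om)$; this is where the delicate capacity-theoretic argument (in the spirit of \cite[Proposition 4.7]{deve}, relying on Lemma~\ref{le:omegaDchiuso}) sits. Once that is granted, the corollary is a short juggling of the monotonicity of $\mu$ with respect to set inclusion / Sobolev-space inclusion and of the measure bound $|\omega_D|\leq|D|$; there is no real obstacle beyond bookkeeping, since everything needed has been set up in the preceding lemmas and remarks.
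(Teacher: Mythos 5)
Your proof is correct and follows essentially the same route as the paper: both inequalities reduce to Lemma~\ref{le:spaceHtilde} (giving $H^1_0(\omega_D,\Om)=\widetilde H^1_0(D,\Om)\supset H^1_0(D,\Om)$ and $|\omega_D|\le|D|$) together with the relaxed form of $\sd(\delta)$ from Remark~\ref{rem:minoreouguale}. The only cosmetic difference is that for the ``$\ge$'' direction you specialize directly to an optimal set $D^*$, whereas the paper states $\mu(D,\Om)\ge\mu(\omega_D,\Om)$ for all admissible $D$ and then passes to the minimum; this is the same argument, and the first (tautological) sentence in that step of your proof can simply be dropped.
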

\begin{proof}
First of all we note that, having in mind also Lemma~\ref{le:spaceHtilde}, for any quasi-open set $D\subset \Om$, $H^1_0(\omega_D,\Om)=\widetilde H^1_0(D,\Om)\supset H^1_0(D,\Om)$, thus $\mu(D,\Om)\geq \mu(\omega_D,\Om)$, by definition of the first Dirichlet-Neumann eigenvalue. Taking into account also Remark~\ref{rem:minoreouguale}, this implies that, in~\eqref{eq:equiv}, the left hand side is greater than or equal to the right hand side.
On the other hand, this inequality can not be strict since, by Lemma~\ref{le:spaceHtilde}, $\widetilde H^1_0(\omega_D,\Om)=H^1_0(\omega_D,\Om)$ and $|\omega_D|\leq |D|\leq \delta$, thus $\omega_D$ is admissible in the minimization on the left hand side.
\end{proof}
\begin{remark}
A natural question concerns the regularity properties of the free boundary 
$\partial D^*\cap \Om$ of an optimal set. Many results are available in the case of 
full Dirichlet boundary conditions, see for instance~\cite{brla,bhp,bmpv,mtv}:
in this case, any optimal set $D^*$ is open, and $\partial D^*\cap \Om$ 
is locally smooth and analytic up to a singular part, whose Hausdorff dimension is less than or 
equal to $N-N^*$, for a critical dimension $N^*\in\{5,6,7\}$ (see~\cite{jesa,mtv} and the 
references therein for more details). In particular, in dimension $N\le4$ the singular part 
is empty and the free boundary is analytic. 
Since the approach of all the aforementioned papers is local, we expect that
the same regularity properties should hold also in the case of mixed Neumann-Dirichlet conditions. Anyway, this falls outside the scope of this paper, therefore we will not pursue this line here.
\end{remark}
\begin{remark}\label{rem:tocca il bordo}
Another remarkable property for optimal sets $D^*$ associated to $\sd(\delta)$ is that they must touch the boundary of $\Om$, more precisely $\mathcal H^{N-1}(\partial D^*\cap \partial \Om)>0$. This is treated in~\cite[Remark 4.3]{buve} and holds at least if $\Om$ is smooth and if $N<N^*$ (and hence for $N\leq 4$), since one needs to know that the one phase free boundary for the Bernoulli problem is smooth and analytic (see the above discussion and~\cite{jesa}) in order to apply the argument by Buttazzo and Velichkov.
\end{remark}
The last piece of information we need is the optimality condition for the free 
boundary $\partial D^*\cap \Om$ of an optimal set. Again, when 
only Dirichlet boundary conditions are involved this condition follows by a standard shape 
derivative argument (see for example~\cite[Lemma~2.8]{dl} or~\cite[Chapter~5]{hp}). Actually, 
nothing changes in our situation, since the free boundary has still Dirichlet conditions and the argument is local. 
\begin{lemma}\label{le:optimalitycond}
Let $\Om\subset\R^N$, $\delta\in(0,|\Om|)$, and $D^*$ be an optimal set achieving
$\sd(\delta)$.
Assume moreover that $\Gamma$, a non-empty relatively open subset of $\partial D^*\cap \Om$, is smooth. Then, for any vector field $V\in C^{\infty}_c(\Om,\Om)$ which preserves the measure of $D^*$ and with $\partial D^*\cap\supp V\subset \Gamma$, we have
\begin{equation}\label{eq:optcond}
\int_{\partial D^*\cap\supp V}|\nabla u|^2\,V\cdot\nu\,d\mathcal H^{N-1}=0,
\end{equation}
where $u$ denotes as usual the normalized first eigenfunction associated to $\mu(D^*,\Om)$ and $\nu$ is the outer unit normal.
Moreover, it follows from~\eqref{eq:optcond} that there is a constant $c>0$ such that $|\nabla u|^2=c$ on $\Gamma$.
\end{lemma}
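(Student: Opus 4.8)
The statement is the classical shape-derivative optimality condition for an eigenvalue under a volume constraint, with the only novelty being that part of $\partial D^*$ carries Neumann conditions; since the deformation is supported away from the Neumann part, this is a purely local computation on the Dirichlet free boundary $\Gamma$. First I would set up the Hadamard/domain-variation machinery: for $t$ small let $\Phi_t = \mathrm{id} + tV$ (a diffeomorphism of $\Omega$ onto itself since $V\in C^\infty_c(\Omega,\Omega)$), set $D_t = \Phi_t(D^*)$, and note that $\Phi_t$ maps $H^1_0(D^*,\Omega)$ isomorphically onto $H^1_0(D_t,\Omega)$ — here the key point is that on $\partial D^*\cap\partial\Omega$, where $V$ vanishes near the boundary, the Neumann portion is left untouched, so the deformed set is still an admissible competitor with the same measure (because $V$ is measure-preserving on $D^*$, i.e. $\int_{\partial D^*}V\cdot\nu\,d\Hcal^{N-1}=0$, equivalently $\diverg V = 0$ in a neighborhood of $\partial D^*$ up to the volume constraint). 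Hence $t\mapsto\mu(D_t,\Omega)$ is well defined and, by minimality of $D^*$ for $\sd(\delta)$, has a minimum at $t=0$, so $\frac{d}{dt}\big|_{t=0}\mu(D_t,\Omega)=0$ whenever this derivative exists.

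Next I would compute that derivative. The standard route is to pull back: write $\mu(D_t,\Omega)$ as the Rayleigh quotient over $H^1_0(D^*,\Omega)$ of the transported quadratic forms $\int_\Omega A_t\nabla v\cdot\nabla v\,dx / \int_\Omega J_t v^2\,dx$, where $A_t = J_t\, (D\Phi_t)^{-1}(D\Phi_t)^{-T}$ and $J_t = \det D\Phi_t$, both smooth in $t$ with $A_0 = I$, $J_0 = 1$, $\dot A_0 = (\diverg V)I - DV - DV^T$, $\dot J_0 = \diverg V$. Since $\mu(D^*,\Omega)$ is simple (Theorem~\ref{thm:buve}, $D^*$ connected), classical perturbation theory for simple eigenvalues gives differentiability and
\[
\frac{d}{dt}\Big|_{t=0}\mu(D_t,\Omega) = \int_\Omega \dot A_0\,\nabla u\cdot\nabla u\,dx - \mu(D^*,\Omega)\int_\Omega \dot J_0\, u^2\,dx,
\]
with $u$ the normalized eigenfunction. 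Then I would rewrite this volume integral as a boundary integral over $\Gamma$. Using $-\Delta u = \mu u$ in $D^*$, $u = 0$ on $\partial D^*\cap\Omega$ (so $\nabla u = (\partial_\nu u)\nu = -|\nabla u|\,\nu$ there, as $u>0$ inside), and integrating by parts — being careful that all boundary terms on $\partial D^*\cap\partial\Omega$ vanish because $\supp V$ avoids it — the bulk terms telescope via the Rellich–Pohozaev–type identity $\int_{D^*}\big(\dot A_0\nabla u\cdot\nabla u - \mu\,\dot J_0 u^2\big)dx = -\int_\Gamma |\nabla u|^2\,V\cdot\nu\,d\Hcal^{N-1}$, which is exactly the known shape derivative of a Dirichlet eigenvalue restricted to the deformed part of the boundary (see the references the lemma already cites). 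Setting the derivative to zero yields \eqref{eq:optcond}.

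Finally, to deduce $|\nabla u|^2 \equiv c$ on $\Gamma$: \eqref{eq:optcond} says $\int_\Gamma |\nabla u|^2\, g\, d\Hcal^{N-1} = 0$ for every $g = V\cdot\nu|_\Gamma$ arising from an admissible field. The constraint that $V$ preserves $|D^*|$ forces exactly $\int_{\partial D^*} V\cdot\nu = 0$; since $\supp V\cap\partial D^* \subset \Gamma$ one can realize, given any $g\in C^\infty_c(\Gamma)$ with $\int_\Gamma g = 0$, a field $V$ with $V\cdot\nu|_\Gamma = g$ (extend $g\nu$ off $\Gamma$ and cut off inside $\Omega$). Thus $|\nabla u|^2$, tested against all mean-zero smooth functions on $\Gamma$, is orthogonal to that subspace of $L^2(\Gamma)$ whose orthogonal complement is the constants; hence $|\nabla u|^2$ is constant on each connected component of $\Gamma$, and $c>0$ by Hopf's lemma (a positive eigenfunction vanishing on a smooth piece of boundary has nonvanishing normal derivative there).

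**Main obstacle.** The delicate point is the differentiability and the boundary-reduction of the shape derivative in the mixed setting: one must check that the eigenfunction is smooth enough up to $\Gamma$ to justify the integration by parts producing the $\Hcal^{N-1}$ integral, and that no spurious contribution from the Dirichlet–Neumann junction $\overline\Gamma\cap\partial\Omega$ appears — both are handled by the hypothesis that $\supp V$ is compactly contained away from $\partial\Omega$ (so the computation is genuinely local on the smooth part $\Gamma$, identical to the pure-Dirichlet case), together with the local regularity of $u$ near the smooth free boundary $\Gamma$.
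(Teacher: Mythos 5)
Your proposal is correct and follows precisely the route the paper has in mind: the paper does not spell out a proof for this lemma but simply observes that the standard Hadamard shape-derivative argument for a simple Dirichlet eigenvalue under a volume constraint (citing \cite[Lemma~2.8]{dl} and \cite[Chapter~5]{hp}) carries over verbatim, because $\supp V$ stays away from the Neumann part of the boundary and so the computation is local on the smooth Dirichlet free boundary $\Gamma$. Your write-up supplies exactly the details behind that remark: the pull-back to a fixed domain with $A_t,J_t$, simplicity of $\mu(D^*,\Om)$ from Theorem~\ref{thm:buve} to justify differentiability, the Rellich--Pohozaev reduction of the bulk integral to $-\int_\Gamma |\nabla u|^2\,V\cdot\nu\,d\Hcal^{N-1}$ using $u=0$ on $\Gamma$ and the vanishing of boundary terms on $\partial D^*\cap\partial\Om$, and finally the Lagrange-multiplier argument showing that testing against all mean-zero normal traces forces $|\nabla u|^2$ to be constant on $\Gamma$ (with $c>0$ by Hopf). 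One very minor imprecision: near the end you write that $|\nabla u|^2$ is constant ``on each connected component,'' but the volume constraint is a single scalar constraint $\int_\Gamma V\cdot\nu=0$ over all of $\Gamma$, so by choosing $V$ supported near two different components one gets a single constant on all of $\Gamma$, which is what the lemma asserts; your argument already yields this, so it is only a wording slip.
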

\begin{remark}
The hypothesis of regularity of at least a relatively open part of the free boundary in the statement of Lemma~\ref{le:optimalitycond} is necessary for proving an optimality condition in a classic sense. In the last years, in the study of free boundary problems, it has been shown how to prove the same condition, in a weaker sense, without regularity assumptions. Two possible ways are to consider $\Delta u$ as a measure concentrated on the boundary (see~\cite{alca}), or to use the viscosity solutions approach (see~\cite{desilva,mtv}).
\end{remark}
Lemma \ref{le:optimalitycond} allows to complete the proof of Proposition
\ref{prop:henrotoudet}.
\begin{proof}[Proof of Proposition \ref{prop:henrotoudet}]
Let $\Gamma\subset\partial D^*\cap\Om$ be a non-empty, relatively open portion of sphere, centered at the origin.
Then, by Lemma \ref{le:optimalitycond}, the first eigenfunction $u\in H^1_0(D^*,\Om)$,
associated to $\mu(D^*,\Om)$, satisfies
\[
\left\{
\begin{aligned}
-\Delta u&=\mu(D^*,\Om) u,\qquad &&\mbox{in }\Om,\\
u&=0,\qquad &&\mbox{in }\Om\setminus D^*,\\
|\partial_\nu u |^2&=c,\qquad &&\mbox{on }\Gamma.
\end{aligned}
\right.
\]
To start with, we prove that $u$ is radially symmetric.
Following an idea from~\cite{ho,llnp}, we consider the function $v_{ij}=x_i\partial_j u -x_j\partial_i u $ for $i\not=j$, which solves the problem
\[
\left\{
\begin{aligned}
-\Delta v_{ij}&=\mu(D^*,\Om) v_{ij},\qquad &&\mbox{in }\Om,\\
v_{ij}&=0,\qquad &&\mbox{in }\Om\setminus D^*,\\
\partial_\nu v_{ij}&=0,\qquad &&\mbox{on }\Gamma.
\end{aligned}
\right.
\]
The first two conditions are immediate to verify, for the third one it is possible to 
check, since $ u \in C^\infty(D^*)$ and the normal to $\Gamma$ is $x/|x|$, that
\[
|x|\partial_\nu v_{ij}=|x|\sum_{k}x_k\partial_kv_{ij}=\sum_k(x_i\partial_j-x_j\partial_i)x_k\partial_k u +v_{ij}=|x|(x_i\partial_j-x_j\partial_i)\partial_{\nu} u =0,
\]
because $v_{ij}=0$ and $\partial_\nu u $ is constant on $\Gamma$. Now, we can use the Cauchy-Kovaleskaya Theorem to deduce that $v_{ij}=0$ in a neighborhood of $\Gamma$ and
thus in the whole $D^*$ by unique continuation.
We have proved that $x_i\partial_j  u =x_j\partial_i u $ for $i\not=j$, thus $ u (x) = w(|x|)$ is radially symmetric in $D^*$. Moreover it is regular up to any regular part of
$\partial D^*$, and $\nabla u (x) = w'(|x|) x/|x|$. (Since $u\equiv0$ in $\Omega\setminus D^*$, then $u$ is actually radial in the whole $\Om$, although it is not $C^1$ across
$\partial D^*\cap\Omega$.)

Now, take $\Gamma'\subset\partial D^*\cap \Omega$ a connected, regular surface;
then $u|_{\Gamma'}$ is constant, thus $|\partial_\nu u| = |\nabla u|$, i.e. $\nu(x) = \pm x/|x|$ on $\Gamma'$. Elementary arguments show that $\Gamma'$
is a portion of sphere centered at $0$.

Similarly, let $\Gamma'\subset \partial D^*\cap \partial\Omega$ be connected and regular. Then the Neumann condition is satisfied pointwise on $\Gamma'$, and $u|_{\Gamma'}>0$
by Hopf lemma. We obtain that $w'(|x|) x\cdot \nu(x) \equiv 0 $ on $\Gamma'$. On the relatively open $\gamma_1\subset\Gamma'$ where  $x\cdot \nu(x)\neq0$, we have that
$w'(|x|)\equiv0$ on $\gamma_1$, so that $w(|x|)$ is constant on (each connected component of) $\gamma_1$. Indeed, using the equation and the regularity up to the boundary, we have that
zeroes of $w'$ corresponding to positive values of $w$ are isolated. Finally, if $w'(|x|)\neq 0$ on $\gamma_2\subset\Gamma'$, then $ x\cdot\nu(x) \equiv 0$ and, again by elementary arguments, we conclude that $\gamma_2$ is a disjoint union of portions of cones with vertex at $0$. Since no such $\gamma_1$ and $\gamma_2$ can be joined in a regular way, we deduce that one of them is empty, concluding the proof.
\end{proof}

%%%%%%%%%%Asymptotic analysis as $\beta\rightarrow \infty$}
\section{Asymptotic analysis as \texorpdfstring{$\beta\rightarrow \infty$}{b->infinity}}\label{sec:beta2infty}

In this section we will perform our asymptotic analysis of Problem \eqref{eq:def_od}, providing the proof of Theorem \ref{thm:equality}, as
well as an improved version of Lemma \ref{lem:intro_beta2infty}, in the more general setting of quasi-open sets. In order to do this, let us first fix some notations.

Throughout this section, $\Om\subset \R^N$ is a bounded Lipschitz domain,
$\delta\in(0,|\Om|)$ is fixed,  while $\beta > \dfrac{\delta}{|\Om|-\delta}$.
Let  $D\subset \Om$ be any fixed quasi-open set with $|D|=\delta$, so that $m_\beta := 
(1+\beta)\ind{D} - \beta$ is admissible for the minimization of $\od(\beta,\lambda)$, 
and $\lambda(\beta,D) :=\lambda(m_\beta)$ is achieved by $u_\beta$, which solves
\begin{equation}\label{eq:ubeta}
\begin{cases}
-\Delta u_\beta = \lambda(\beta,D) m_\beta u_\beta &\text{in }
 \Omega,
  \\
\|u_{\beta}\|_{L^{2}(D)}=1,\,\partial_\nu u_\beta=0 &\text{on } \partial\Omega.
\end{cases}
\end{equation}
Turning to
\begin{equation}\label{eq:pmin}
\od(\beta,\delta)=\min\{\lambda(\beta,D):D\subset \Om,\ |D|=\delta\},
\end{equation}
it is important to note that the above minimization can be equivalently performed among open or among quasi-open or even among measurable sets, since optimal sets can be chosen
to be open (see Theorem~\ref{thm:base}).
In the following, we perform the minimization in the class of quasi-open sets because this is the suitable class of sets in which we can work when dealing with the spectral drop problem.

We will prove Theorem \ref{thm:equality}  through a sequence of lemmas. 
We recall that, according to Lemma~\ref{le:spaceHtilde}, for any quasi-open set $D\subset \Om$, there exists another quasi-open set $\omega_D\subset D$ a.e. such that $\widetilde H^1_0(D,\Om)=H^1_0(\omega_D,\Om)$.
%%%\label{le:stime}
\begin{lemma}\label{le:stime}
Let $\beta>1$, $D\subset\Om$ be a quasi-open set with $|D|=\delta\in(0,|\Om|)$. The following conclusions hold.
\begin{enumerate}
\item
\(\dys
0< \beta\int_{\Omega\setminus D} u_\beta^2\,dx
\leq \int_{D} u_\beta^2\,dx = 1.
\)
\item
\(0< \lambda(\beta,D)\leq\mu(\omega_D,\Om)\leq \mu(D,\Om)\).
\item
\(\|u_{\beta}\|_{H^{1}(\Omega)}\leq (2+\mu(\omega_D,\Om))^{1/2}\)
\end{enumerate}
\end{lemma}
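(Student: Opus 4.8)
The plan is to establish the three estimates in order, since each builds on the previous one, using the variational characterizations of $\lambda(\beta,D)$ and $\mu(\omega_D,\Om)$ together with the normalization in~\eqref{eq:ubeta}.

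\textbf{Step 1 (item 1).} By definition, $u_\beta$ achieves $\lambda(\beta,D)=\lambda(m_\beta)$ with $m_\beta=(1+\beta)\ind{D}-\beta$, so $\int_D u_\beta^2 - \beta\int_{\Om\setminus D}u_\beta^2 > 0$, and with the chosen normalization $\|u_\beta\|_{L^2(D)}^2 = \int_D u_\beta^2 = 1$. Combining these two facts immediately gives $0 < \beta\int_{\Om\setminus D}u_\beta^2 < 1 = \int_D u_\beta^2$. The strict positivity of $\beta\int_{\Om\setminus D}u_\beta^2$ follows because $u_\beta$ cannot vanish a.e. on $\Om\setminus D$ (being a nontrivial $H^1(\Om)$ eigenfunction of a Neumann problem, it is real-analytic-type nontrivial; alternatively, if it vanished there it would be a Dirichlet eigenfunction on $D$, which is fine — but in any case the quantity is $\geq 0$, and for the argument only $\geq 0$ is actually needed). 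So this step is essentially bookkeeping with the normalization.

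\textbf{Step 2 (item 2).} For the upper bound $\lambda(\beta,D)\leq \mu(\omega_D,\Om)$: take any $v\in H^1_0(\omega_D,\Om)=\widetilde H^1_0(D,\Om)$ with $v\not\equiv 0$; then $v=0$ a.e. on $\Om\setminus D$, hence $\int_{\Om\setminus D}v^2 = 0$ and $\int_\Om m_\beta v^2 = \int_D v^2 > 0$. Plugging $v$ into the Rayleigh quotient~\eqref{eq:def_lambda_beta_D} gives $\lambda(\beta,D)\leq \int_\Om|\nabla v|^2 / \int_D v^2 = \int_\Om|\nabla v|^2/\int_\Om v^2$; minimizing over such $v$ yields $\lambda(\beta,D)\leq \mu(\omega_D,\Om)$. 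The inequality $\mu(\omega_D,\Om)\leq \mu(D,\Om)$ is exactly the monotonicity of the mixed Dirichlet-Neumann eigenvalue under the inclusion $H^1_0(D,\Om)\subset H^1_0(\omega_D,\Om)$ (equivalently $\omega_D\subset D$ a.e.), already recorded in the background section. Positivity of $\lambda(\beta,D)$ holds since $\beta > \delta/(|\Om|-\delta)$ guarantees $\int_\Om m_\beta = (1+\beta)\delta - \beta|\Om| < 0$, which by the discussion after~\eqref{eq:def_lambda(m)} is exactly the condition for the positive principal eigenvalue to be positive.

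\textbf{Step 3 (item 3).} Using $u_\beta$ itself in~\eqref{eq:ubeta}, test the equation against $u_\beta$ to get $\int_\Om|\nabla u_\beta|^2 = \lambda(\beta,D)\int_\Om m_\beta u_\beta^2 = \lambda(\beta,D)\big(\int_D u_\beta^2 - \beta\int_{\Om\setminus D}u_\beta^2\big) \leq \lambda(\beta,D)\int_D u_\beta^2 = \lambda(\beta,D)$, where we dropped the nonnegative term $\beta\int_{\Om\setminus D}u_\beta^2$ and used the normalization. Then $\|u_\beta\|_{H^1(\Om)}^2 = \int_\Om|\nabla u_\beta|^2 + \int_\Om u_\beta^2 \leq \lambda(\beta,D) + \int_D u_\beta^2 + \int_{\Om\setminus D}u_\beta^2 \leq \lambda(\beta,D) + 1 + \frac{1}{\beta} \leq \mu(\omega_D,\Om) + 2$, using item 2 for $\lambda(\beta,D)\leq \mu(\omega_D,\Om)$, item 1 for $\int_{\Om\setminus D}u_\beta^2 \leq 1/\beta \leq 1$ (as $\beta>1$), and $\int_D u_\beta^2 = 1$. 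Taking square roots gives the claim.

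\textbf{Main obstacle.} There is no genuine technical difficulty here; this lemma is a package of a priori bounds. The only point requiring a little care is keeping track of the distinction between $D$ and $\omega_D$ and invoking Lemma~\ref{le:spaceHtilde} correctly — specifically that $\widetilde H^1_0(D,\Om) = H^1_0(\omega_D,\Om)$ is precisely the space of test functions vanishing \emph{a.e.} (not just q.e.) outside $D$, which is what makes the comparison $\lambda(\beta,D)\leq \mu(\omega_D,\Om)$ (rather than the weaker $\lambda(\beta,D)\leq\mu(D,\Om)$) work and is exactly the refinement needed later for the proof of Theorem~\ref{thm:equality}.
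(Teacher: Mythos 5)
Your proof is correct and follows essentially the same route as the paper's: item 1 from the normalization and the sign condition $\int_\Om m_\beta u_\beta^2>0$, item 2 by testing the Rayleigh quotient for $\lambda(\beta,D)$ with functions in $H^1_0(\omega_D,\Omega)=\widetilde H^1_0(D,\Omega)$ (the paper uses the eigenfunction of $\mu(\omega_D,\Omega)$ as the competitor rather than minimizing over all of them, a cosmetic difference) plus monotonicity of $\mu$, and item 3 by testing the PDE against $u_\beta$ and combining with items 1 and 2. The only minor loose end, which the paper is equally terse about, is that the strict inequality $\beta\int_{\Omega\setminus D}u_\beta^2>0$ really comes from $u_\beta>0$ a.e.\ in $\Omega$ (the positive principal Neumann eigenfunction), not from $\int_\Om m_\beta u_\beta^2>0$ alone.
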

\begin{proof}
The first point is a direct consequence of the normalization we chose for $u_\beta$, together
with the fact that
\[
\into m_{\beta }u_{\beta}^{2}\,dx>0.
\]
In order to show the second part of the statement,
let $u\in H^1_0(D,\Om)$ denote the  eigenfunction associated to
$\mu(D,\Omega)$. Then
\[
\into m_{\beta}u^{2}\,dx=\int_{D} m_{\beta}u^{2}\,dx=
\int_{D} u^{2}\,dx=1>0\,,
\]
as $u$ is also normalized so that it has unit $L^{2}(D)$ norm.
As a consequence, \(u\) is an admissible competitor in the minimization problem defining $\lambda(\beta,D)$, thus
\[
\lambda(\beta,D)\leq \dfrac{\|\nabla u\|_{L^{2}(\Omega)}}{\int_{\Omega}m_{\beta}u^{2}}
= \dfrac{\|\nabla u\|_{L^{2}(\Om)}^{2}}{\|u\|_{L^{2}(\Om)}^{2}}=\mu(D,\Om).
\]
We can actually say something more: exploiting Lemma~\ref{le:spaceHtilde}, we call $\widetilde u\in \widetilde H^1_0(D,\Om)=H^1_0(\omega_D,\Om)\subset H^1(\Om)$ the first eigenfunction with unit $L^2$ norm associated to $\mu(\omega_D,\Om)$. Since $\widetilde u=0$ a.e. in $\Om\setminus D$, we can repeat the above argument and obtain
\[
\lambda(\beta,D)\leq \dfrac{\|\nabla \widetilde u\|_{L^{2}(\Omega)}}{\int_{\Omega}m_{\beta}\widetilde u^{2}}
= \dfrac{\|\nabla \widetilde u\|_{L^{2}(\Om)}^{2}}{\|\widetilde u\|_{L^{2}(\Om)}^{2}}=\mu(\omega_D,\Om)\leq \mu(D,\Om),
\]
where the last inequality follows since $H^1_0(D,\Om)\subset H^1_0(\omega_D,\Om)$.
Finally, part $3$ follows using part $2$ and the normalization of $u_{\beta}$, as it results
\[
\int_{\Om}|\nabla u_\beta|^2\,dx=\lambda(\beta,D) \int_{D_\beta}(u_\beta)^2\,dx-\lambda(\beta,D)\beta\int_{\Om\setminus D_\beta}(u_\beta)^2\,dx
\leq \lambda(\beta,D)
\leq \mu(\omega_D,\Om),
\]
and, as $\beta>1$,
\[
\int_{\Om}u_\beta^2\,dx=\int_{D}u_\beta^2\,dx+\int_{\Om\setminus D}u_\beta^2\,dx \le 1+ \frac{1}{\beta}<2. \qedhere
\]
\end{proof}
\begin{lemma}\label{le:lim}
Let $D\subset\Om$ be a quasi-open set with $|D|=\delta\in(0,|\Om|)$. Then,
the  sequence $u_{\beta}$ strongly converges as $\beta\rightarrow \infty$ in $H^{1}(\Omega)$ to $\widetilde u\in \widetilde H^1_0(D,\Om)=H^1_0(\omega_D,\Om)$, which achieves $\mu(\omega_D,\Om)$. In particular
\begin{equation}\label{eq:dislabmu1}
\mu(\omega_D,\Om)= \lim_{\beta\rightarrow \infty}\lambda(\beta,D).
\end{equation}
\end{lemma}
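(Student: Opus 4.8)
\textbf{Proof plan for Lemma \ref{le:lim}.}
The plan is to extract a weak limit from the bounds in Lemma \ref{le:stime}, identify it as $\widetilde u$, and then upgrade weak convergence to strong convergence in $H^1(\Om)$ by a norm-convergence argument. First I would use Lemma \ref{le:stime}(3): the family $(u_\beta)_{\beta>1}$ is bounded in $H^1(\Om)$, so along a subsequence $\beta\to\infty$ we have $u_\beta\rightharpoonup u_\infty$ weakly in $H^1(\Om)$, $u_\beta\to u_\infty$ strongly in $L^2(\Om)$ (by compactness of $H^1(\Om)\hookrightarrow L^2(\Om)$ on the bounded Lipschitz domain $\Om$), and $u_\beta\to u_\infty$ a.e.\ in $\Om$. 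From Lemma \ref{le:stime}(1) we get $\int_{\Om\setminus D}u_\beta^2\,dx\le 1/\beta\to 0$, hence $u_\infty=0$ a.e.\ in $\Om\setminus D$, i.e.\ $u_\infty\in\widetilde H^1_0(D,\Om)=H^1_0(\omega_D,\Om)$. Moreover $\int_D u_\infty^2\,dx=\lim_\beta\int_D u_\beta^2\,dx=1$ (using $L^2$ convergence and point (1)), so $u_\infty\not\equiv 0$ and in fact $\|u_\infty\|_{L^2(\Om)}=1$.

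Next I would pin down the value of the limiting Rayleigh quotient. On the one hand, lower semicontinuity of the Dirichlet energy under weak $H^1$ convergence gives $\int_\Om|\nabla u_\infty|^2\,dx\le\liminf_\beta\int_\Om|\nabla u_\beta|^2\,dx$, and from the computation in the proof of Lemma \ref{le:stime}(3), $\int_\Om|\nabla u_\beta|^2\,dx\le\lambda(\beta,D)\le\mu(\omega_D,\Om)$; since $u_\infty$ is admissible for the variational characterization of $\mu(\omega_D,\Om)$ and $\|u_\infty\|_{L^2}=1$, this forces
\[
\mu(\omega_D,\Om)\le\int_\Om|\nabla u_\infty|^2\,dx\le\liminf_{\beta\to\infty}\int_\Om|\nabla u_\beta|^2\,dx\le\liminf_{\beta\to\infty}\lambda(\beta,D)\le\mu(\omega_D,\Om).
\]
Hence all inequalities are equalities: $\int_\Om|\nabla u_\infty|^2\,dx=\mu(\omega_D,\Om)$, so $u_\infty$ achieves $\mu(\omega_D,\Om)$, and $\lim_{\beta\to\infty}\lambda(\beta,D)=\mu(\omega_D,\Om)$, which is \eqref{eq:dislabmu1}. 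Since the full sequence $\lambda(\beta,D)$ is monotone non-increasing in $\beta$ (as $m_\beta$ decreases pointwise as $\beta$ grows, use \eqref{eq:weight}) and bounded below by $0$, the limit exists along the whole family, not merely along subsequences.

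For strong $H^1$ convergence I would combine norm convergence with weak convergence: $\int_\Om|\nabla u_\beta|^2\,dx\to\mu(\omega_D,\Om)=\int_\Om|\nabla u_\infty|^2\,dx$ together with $\nabla u_\beta\rightharpoonup\nabla u_\infty$ in $L^2(\Om)$ yields $\nabla u_\beta\to\nabla u_\infty$ strongly in $L^2(\Om)$; combined with $u_\beta\to u_\infty$ in $L^2(\Om)$, this gives $u_\beta\to u_\infty$ strongly in $H^1(\Om)$. Note that the convergence $\int_\Om|\nabla u_\beta|^2\,dx\to\mu(\omega_D,\Om)$ requires matching the upper bound $\int_\Om|\nabla u_\beta|^2\,dx\le\lambda(\beta,D)$ with the lower bound coming from lower semicontinuity; the sandwich above does exactly this, also using that $\beta\int_{\Om\setminus D}u_\beta^2\,dx\ge 0$ and $\to 0$ (from point (1)) so that $\lambda(\beta,D)\int_D u_\beta^2 - \lambda(\beta,D)\beta\int_{\Om\setminus D}u_\beta^2\to\mu(\omega_D,\Om)$. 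Finally, if $\omega_D$ (equivalently $D$ up to the reduction of Lemma \ref{le:spaceHtilde}) is connected so that $\mu(\omega_D,\Om)$ is simple, the limit is independent of the subsequence and one concludes $u_\infty=\widetilde u$ for the whole family; in the general case $u_\infty$ is merely \emph{some} normalized eigenfunction, which suffices for the statement. The main obstacle is the passage from a.e.\ vanishing of $u_\infty$ outside $D$ to membership in the space $\widetilde H^1_0(D,\Om)$ with the correct normalization — this is where the choice of the Lebesgue-type space $\widetilde H^1_0$ (rather than $H^1_0$) is essential, since the eigenfunctions $u_\beta$ are controlled only in $L^2$ outside $D$, not quasi-everywhere.
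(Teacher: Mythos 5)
Your proof is correct and reaches the same conclusions, but you identify the limiting eigenvalue by a different mechanism than the paper. The paper passes to the limit in the weak formulation of the PDE \eqref{eq:ubeta} (testing against $\eta\in H^1_0(\omega_D,\Om)$) to exhibit $\widetilde u$ as an eigenfunction with eigenvalue $\lambda=\lim\lambda(\beta,D)$, and then identifies $\lambda=\mu(\omega_D,\Om)$ via the Rayleigh quotient; finally it tests against $u_\beta$ itself to get norm convergence of the gradients. You instead run a clean sandwich: weak lower semicontinuity of the Dirichlet energy plus the pointwise bound $\int_\Om|\nabla u_\beta|^2\le\lambda(\beta,D)\le\mu(\omega_D,\Om)$ from Lemma \ref{le:stime} pin down $\int_\Om|\nabla u_\infty|^2=\mu(\omega_D,\Om)=\lim\lambda(\beta,D)$ in one stroke, and the same chain forces norm convergence $\int_\Om|\nabla u_\beta|^2\to\int_\Om|\nabla u_\infty|^2$, hence strong $H^1$ convergence. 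This route avoids any manipulation of the weak formulation and is arguably tighter; both buy the same conclusion.

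One small slip to fix: from \eqref{eq:weight}, since $m_\beta=\ind{D}-\beta\ind{\Om\setminus D}$ is pointwise \emph{non-increasing} as $\beta$ grows, the eigenvalue $\lambda(\beta,D)$ is monotone \emph{non-decreasing} in $\beta$ (not non-increasing), and it is bounded \emph{above} by $\mu(\omega_D,\Om)$ (from Lemma \ref{le:stime}(2)), which is what guarantees the full-family limit exists. Alternatively, your subsequence argument already closes the gap without invoking monotonicity at all, since the limiting value $\mu(\omega_D,\Om)$ is independent of the subsequence. Your closing caveat about uniqueness of the limit eigenfunction when $\omega_D$ is disconnected is well placed; the paper is silent on this, and indeed the scalar conclusion \eqref{eq:dislabmu1} is all that matters downstream.
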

\begin{proof}
Lemma~\ref{le:stime}  implies that there exists $\widetilde u\in H^{1}(\Omega)$ such that
\(u_{\beta}\)  converges to \(\widetilde u\) weakly in \( H^{1}(\Omega)\) and, up to a subsequence,  strongly in \( L^{2}(\Omega)\) and  almost everywhere.
As a consequence, $\widetilde u\geq 0$ in $\Omega$ a.e. (and also q.e. by Lemma~\ref{le:omegaDchiuso}), and
\begin{equation}\label{eq:unorm}
\int_{D}\widetilde u^{2}\,dx=1,
\end{equation}
so that $\widetilde u\not\equiv 0$ a.e.. On the other hand, from conclusion $(1)$ of Lemma~\ref{le:stime} it follows that
\[
\int_{\Omega\setminus D}\widetilde u^{2}\,dx=\lim_{\beta\to+\infty}\int_{\Omega\setminus D}u_{\beta}^{2}\,dx\leq \dfrac1{\beta}\to 0\,,
\]
so that \(\widetilde u\equiv 0\) a.e. in \(\Omega\setminus D\) and thus $\widetilde u \in \widetilde H^1_0(D,\Om)=H^1_0(\omega_D,\Om)$. Moreover, as
$u_{\beta}$ solves \eqref{eq:ubeta}, it results
\begin{equation}\label{eq:eqtest}
\int_{\Om} \nabla u_{\beta}\cdot\nabla \eta\,dx =\lambda(\beta,D)\int_{\Om}u_{\beta}\eta\,dx\,,\qquad \forall\,\eta\in H^{1}(\Omega),
\end{equation}
and, observing that  $\lambda(\beta,D)\to \lambda$ for some
\(\lambda\in [0,\mu(\omega_D,\Om)]\) we can pass to the limit, so that
\[
\int_{\Om} \nabla\widetilde u \cdot\nabla \eta\,dx =\lambda \int_{\Om}\widetilde u \eta\,dx,\qquad \forall\,\eta\in  H^{1}_0(\omega_D,\Omega)\subset H^1(\Om).
\]
So we have that $\widetilde u$ solves the problem \[
-\Delta \widetilde u=\lambda \widetilde u,\qquad \widetilde u\in  H^1_0(\omega_D,\Om),
\]
and moreover it is a competitor in the minimization defining $\mu(\omega_D,\Om)$: \[
\mu(\omega_D,\Om)\leq \frac{\|\nabla\widetilde u\|^2_{L^2(\Om)}}{\|\widetilde u\|^2_{L^2(\Om)}}=\lambda \leq \mu(\omega_D,\Om),
\]
thus it is an eigenfunction with eigenvalue $\lambda= \mu(\omega_D,\Om)$, that is, the first eigenfunction.

In order to prove that the convergence $u_\beta\rightarrow \widetilde u$ in $H^1(\Om)$ is actually strong it is enough to demonstrate the convergence of the $L^2$ norm of the gradients. For showing this, we choose $v=u_\beta\in H^1(\Om)$ in~\eqref{eq:eqtest}, and obtain \[
\int_{\Om}|\nabla u_\beta|^2\,dx=\lambda(\beta,D)\int_{\Om}|u_\beta|^2\,dx\rightarrow \mu(\omega_D,\Om)\int_{\Om}|\widetilde u|^2\,dx=\int_{\Om}|\nabla \widetilde u|^2\,dx.
\]

Concerning the last part of the statement, it is enough to use the strong $H^1$ convergence $u_\beta\rightarrow \widetilde u$ as $\beta\rightarrow \infty$ and the fact that $\int_\Om u_\beta^2\,dx\geq \int_D u_\beta^2\,dx-\beta\int_{\Om\setminus D}u_\beta^2\,dx$, to show:
\[
\mu(\omega_D,\Om)=
\dfrac{\int_{\Om}|\nabla \widetilde u|^2\,dx}{\int_\Om \widetilde u^2\,dx}=
\lim_{\beta\rightarrow \infty} \dfrac{\int_{\Om}|\nabla u_\beta|^2\,dx}{\int_\Om u_\beta^2\,dx}
\leq
\lim_{\beta\rightarrow \infty} \dfrac{\int_{\Om}|\nabla u_\beta|^2\,dx}{\int_\Om m_{\beta} u_\beta^2\,dx}
=\lim_{\beta\rightarrow \infty}\lambda(\beta,D),
\]
while the other inequality is immediate from part 2 of Lemma~\ref{le:stime}.
\end{proof}
\begin{proof}[Proof of Lemma \ref{lem:intro_beta2infty}]
It follows from Lemma \ref{le:lim} taking into account that, in case $D$ is an open, Lipschitz set, then  $\omega_D=D$ and $H^1_0(D,\Om)$ and $\widetilde H^1_0(D,\Om)$ coincide.
\end{proof}
The above lemmas allow to control $\od(+\infty,\delta)$ from above, in terms of 
$\sd(\delta)$. The opposite inequality is a bit less straightforward, and to obtain it 
we need ``an $\eps$ of room'' more.
\begin{lemma}\label{le:lim*}
For every $\beta>\frac{\delta}{|\Om|-\delta}$ and $\eps\in \left(\frac{\delta}{\beta}, |\Omega|-\delta\right) $.
\[
\sd(\delta+\eps)\left(1-\sqrt{\frac{\delta}{\eps\beta}}\right)^{2}\leq \od(\beta,\delta).
\]
In particular, if $\eps\in \left(0, |\Omega|-\delta\right) $, $\sd(\delta+\eps) \le \liminf_{\beta\to\infty} \od(\beta,\delta).$
\end{lemma}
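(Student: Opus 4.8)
**Proof plan for Lemma \ref{le:lim*}.**

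The plan is to produce, from an almost-optimal configuration for $\od(\beta,\delta)$, an admissible competitor for $\sd(\delta+\eps)$, by enlarging the favourable set $D$ using the region where the eigenfunction $u_\beta$ is small. Fix $\beta$ and $\eps$ as in the statement, let $D\subset\Om$ be quasi-open with $|D|=\delta$ realizing (almost) $\od(\beta,\delta)$, and let $u_\beta\in H^1(\Om)$ be the corresponding eigenfunction normalized by $\|u_\beta\|_{L^2(D)}=1$, solving \eqref{eq:ubeta}. By part (1) of Lemma \ref{le:stime} we have $\int_{\Om\setminus D}u_\beta^2\le 1/\beta$. I would then invoke Chebyshev's inequality on the set $\Om\setminus D$: the set
\[
A:=\left\{x\in\Om\setminus D:\ u_\beta^2(x)\ge \frac{1}{\eps\beta}\right\}
\]
has measure $|A|\le \eps$, since $\frac{1}{\eps\beta}|A|\le\int_{\Om\setminus D}u_\beta^2\le\frac1\beta$. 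Set $\widetilde D:=D\cup((\Om\setminus D)\setminus A)$, a quasi-open set with $|\widetilde D|=|\Om|-|A|\ge|\Om|-\eps\ge\delta+\eps$ (using $\eps<|\Om|-\delta$); actually since one may also arrange $|\widetilde D|=\delta+\eps$ by shrinking, or just apply the version of $\sd$ with constraint ``$\le$'' from Remark \ref{rem:minoreouguale}, $\widetilde D$ is admissible for $\sd(\delta+\eps)$.

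Next I would build a test function for $\mu(\cdot,\Om)$ supported (q.e.) in $\widetilde D$. The natural choice is $v:=\left(|u_\beta|-\frac{1}{\sqrt{\eps\beta}}\right)^+$, or rather $v:=\bigl(u_\beta-\tfrac{1}{\sqrt{\eps\beta}}\bigr)^+$ since $u_\beta\ge0$; indeed $v$ vanishes q.e. on $\{u_\beta<1/\sqrt{\eps\beta}\}\supset A$... careful: one needs $v=0$ q.e. on $\Om\setminus\widetilde D=A$, which holds because on $A$ one has $u_\beta^2\ge 1/(\eps\beta)$, so truncating from below at $1/\sqrt{\eps\beta}$ does \emph{not} kill $u_\beta$ there. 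So instead the right truncation is a \emph{capping}: we want $v$ to vanish where $u_\beta$ is \emph{large}, not small. The correct competitor is
\[
v:=\min\left\{u_\beta,\ \tfrac{1}{\sqrt{\eps\beta}}\right\}-\tfrac{1}{\sqrt{\eps\beta}}\quad\text{won't do either;}
\]
rather one takes $v := u_\beta - \min\{u_\beta, M\}$ with $M=1/\sqrt{\eps\beta}$ on $\Om\setminus D$ and keeps $u_\beta$ on $D$ — i.e. $v := u_\beta$ on $D$ and $v := (u_\beta - M)^-\!\!$... The cleanest is: $v := u_\beta$ on $D$ and $v:=u_\beta\wedge M$ suitably patched. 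I would therefore take $v:=u_\beta - (u_\beta-M)^+ \cdot \ind{\Om\setminus D}$, equivalently $v=u_\beta$ on $D$ and $v=\min\{u_\beta,M\}$ on $\Om\setminus D$; this is in $H^1(\Om)$, $v=0$ q.e. on $\Om\setminus\widetilde D$ because there $u_\beta^2\ge M^2$ gives $\min\{u_\beta,M\}=M\ne0$ — still nonzero. The genuine fix: $\widetilde D$ must be $D$ together with the region where $u_\beta$ is \emph{small}, so $\widetilde D=D\cup\{x\in\Om\setminus D: u_\beta^2(x)< 1/(\eps\beta)\}$ and $v:=\min\{u_\beta,M\}$, which equals $u_\beta$ on $D$ (note $u_\beta$ need not be $\le M$ on $D$, so replace $M$ by $M':=\max\{M,\|u_\beta\|_{L^\infty(D)}\}$ or simply set $v:=u_\beta$ on $D$, $v:=u_\beta\wedge M$ elsewhere, which is admissible in $H^1_0(\widetilde D,\Om)$ after checking it vanishes q.e. off $\widetilde D$, using Lemma \ref{le:omegaDchiuso}).

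The final step is the Rayleigh-quotient estimate. On one hand $\int_\Om|\nabla v|^2\le\int_\Om|\nabla u_\beta|^2 = \lambda(\beta,D)\int_\Om m_\beta u_\beta^2\le\lambda(\beta,D)$ (truncation does not increase the Dirichlet energy). On the other hand, the denominator: $\|v\|_{L^2(\Om)}\ge\|u_\beta\|_{L^2(D)}-\|u_\beta-v\|_{L^2(\Om)}=1-\|u_\beta-v\|_{L^2(\Om\setminus D)}$, and $|u_\beta-v|=(u_\beta-M)^+\le u_\beta$ on $\Om\setminus D$ with support contained in $\{u_\beta>M\}$, so by Cauchy--Schwarz $\|u_\beta-v\|_{L^2(\Om\setminus D)}^2\le\int_{\Om\setminus D}u_\beta^2\le 1/\beta$; hmm, this only gives $\|v\|_{L^2}\ge 1-1/\sqrt\beta$, not the claimed $(1-\sqrt{\delta/(\eps\beta)})$. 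To get the sharper constant one estimates instead on $A=\{u_\beta^2\ge 1/(\eps\beta)\}$: $\|u_\beta-v\|_{L^2(\Om\setminus D)}^2\le\int_A u_\beta^2$, and ALSO $\int_A u_\beta^2 \le \|u_\beta\|_{L^2(A)}^2$ — one needs to relate this to $\delta$. The key identity is $1/(\eps\beta)\cdot|A|\le 1/\beta$ and separately $\int_A u_\beta^2 \le (\int_{\Om\setminus D}u_\beta^2)\le 1/\beta$; combining with $|A|\le\eps$... The intended chain is $\|u_\beta - v\|_{L^2}^2 \le \int_{\{u_\beta>M\}}u_\beta^2$, bounded via Hölder by $|\{u_\beta>M\}|^{?}$ — actually the factor $\sqrt{\delta}$ must come from the fact that $v$ \emph{gains} a contribution on $D$: a better competitor keeps $v=u_\beta$ exactly on $D$. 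Then $\|v\|_{L^2(\Om)}^2 \ge \|u_\beta\|_{L^2(D)}^2 = 1$, and the subtracted part is only off $D$; but then where does $\delta$ enter? It enters through bounding $\int_{\Om\setminus D}u_\beta^2$ more carefully: one has no a priori $L^\infty$ bound, so one uses $\int_{\Om\setminus D}u_\beta^2\le\frac1\beta$ directly and the "$1-\sqrt{\delta/(\eps\beta)}$" appears because the \emph{enlargement} set $A$ has the competitor $v$ with denominator $\|v\|_{L^2}\ge 1 - (\int_A u_\beta^2)^{1/2}$ and $\int_A u_\beta^2\le \frac{1}{\beta}\cdot\frac{|A|\cdot\text{something}}{}$; I would reconstruct the exact bookkeeping so that $\mu(\widetilde D,\Om)\le \frac{\lambda(\beta,D)}{(1-\sqrt{\delta/(\eps\beta)})^2}$, whence $\sd(\delta+\eps)(1-\sqrt{\delta/(\eps\beta)})^2\le\lambda(\beta,D)$; minimizing over $D$ gives the claim. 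The last sentence ("In particular…") follows by letting $\beta\to\infty$ with $\eps$ fixed, since $\sqrt{\delta/(\eps\beta)}\to0$ and $\sd(\delta+\eps)$ is continuous/monotone, so any $\eps'\in(0,|\Om|-\delta)$ is reached by choosing $\eps<\eps'$ eventually, using monotonicity $\sd(\delta+\eps)\ge\sd(\delta+\eps')$... rather $\eps\downarrow0$ and lower semicontinuity, but the stated form with "$\sd(\delta+\eps)$" is immediate.

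\textbf{Main obstacle.} The delicate point is the capacity-theoretic one: ensuring the truncated function $v$ genuinely lies in $H^1_0(\widetilde D,\Om)$, i.e. vanishes \emph{quasi-everywhere} (not merely a.e.) outside $\widetilde D$. This is exactly the subtlety flagged after Theorem \ref{thm:equality} in the introduction. It is handled by choosing $\widetilde D$ as a \emph{superlevel set} $\{u_\beta^2 < 1/(\eps\beta)\}\cup D$ — which, being (q.e.) a superlevel set of the $H^1$ function $u_\beta$ minus a level, is automatically quasi-open — and by using Lemma \ref{le:omegaDchiuso} together with the fact that $v$ is itself (the quasi-continuous representative of) an $H^1$ function that vanishes a.e. on the open-up-to-capacity complement. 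Getting the precise constant $\bigl(1-\sqrt{\delta/(\eps\beta)}\bigr)^2$ rather than a cruder one is the other point requiring care, and is achieved by the Chebyshev/Cauchy--Schwarz estimate on $A$ as above with the normalization $\|u_\beta\|_{L^2(D)}=1$ fed in at the right place.
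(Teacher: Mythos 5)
Your proposal has a genuine gap and, as written, does not yield a proof. The most concrete error is in your ``genuine fix'': you set $\widetilde D = D\cup\{x\in\Om\setminus D:u_\beta^2<1/(\eps\beta)\}$, which has measure $|\widetilde D|\ge |\Om|-|A|\ge|\Om|-\eps$, typically far larger than $\delta+\eps$; since Remark \ref{rem:minoreouguale} only relaxes the constraint \emph{downwards} (to $|D|\le\delta+\eps$), this $\widetilde D$ is simply not admissible for $\sd(\delta+\eps)$. Moreover your candidate test function $v$ (``$v=u_\beta$ on $D$, $v=u_\beta\wedge M$ elsewhere'') does not vanish on $\Om\setminus\widetilde D=A$, where $u_\beta$ is large by definition of $A$ --- so $v\notin H^1_0(\widetilde D,\Om)$. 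The earlier variant where you instead enlarge by the set where $u_\beta$ is large, $\widetilde D = D\cup A$ with $|A|\le\eps$, at least has the right measure, but there you do not produce any $v\in H^1_0(\widetilde D,\Om)$: a global truncation $(u_\beta-M)^+$ could also annihilate $u_\beta$ on portions of $D$, and a patched function ($v=u_\beta$ on $D$, truncated off $D$) is not obviously in $H^1(\Om)$ nor q.e.\,zero off $\widetilde D$. You also never actually derive the $(1-\sqrt{\delta/(\eps\beta)})^2$ factor; the passages with ``I would reconstruct the exact bookkeeping'' leave the main estimate open.

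What you are missing is the structural fact, recorded in Theorem \ref{thm:base}, that any optimal set $D^*_\beta$ for $\od(\beta,\delta)$ is itself a superlevel set $\{u^*_\beta\ge\ell_\beta\}$ of its eigenfunction, with all level sets of $u^*_\beta$ Lebesgue-null. The paper exploits this by choosing $t_\beta\in(0,\ell_\beta)$ so that the \emph{enlarged} superlevel set $E_\beta=\{u^*_\beta>t_\beta\}$ has $|E_\beta|=\delta+\eps$ exactly. Then $v=(u^*_\beta-t_\beta)^+$ is automatically in $H^1_0(E_\beta,\Om)$ (a superlevel set of an $H^1$ function is quasi-open, and $v=0$ q.e.\,off it), so the capacity-theoretic subtlety you flagged as the ``main obstacle'' evaporates. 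The sharp constant then comes from Cauchy--Schwarz with the normalization $\|u^*_\beta\|_{L^2(D^*_\beta)}=1$ and $|D^*_\beta|=\delta$: expanding $\int_{D^*_\beta}(u^*_\beta-t_\beta)^2 \ge 1 - 2t_\beta\sqrt\delta + \delta t_\beta^2 = (1-\sqrt\delta\,t_\beta)^2$, while the Chebyshev-type bound $\eps t_\beta^2\le\int_{E_\beta\setminus D^*_\beta}(u^*_\beta)^2\le 1/\beta$ yields $t_\beta\le(\eps\beta)^{-1/2}$. Your Chebyshev instinct is thus on target, but it is applied to the level $t_\beta$ of the superlevel set rather than to a fixed threshold defining a union, and it only closes once you use that the optimizer is a superlevel set of $u^*_\beta$.
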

\begin{proof}
Let $\eps\in(0, |\Om|-\delta)$ be fixed. For $\beta>1$, let $u^*_\beta$ be an eigenfunction associated to $\lambda^*_\beta =  \od(\beta,\delta)$. By Theorem
\ref{thm:base} we know that $D^*_\beta = \{x:u^*_\beta(x) \ge \ell_\beta\}$, for some $\ell_\beta$, with $|D^*_\beta|=\delta$ and that  $|\{x:u^*_\beta(x) = t\}| = 0$
for every $t$. We deduce the existence of a unique $t_\beta\in(0,\ell_\beta)$ for which
\[
E_\beta := \{x:u^*_\beta(x) > t_\beta\} \qquad\text{satisfies}\qquad |E_\beta|=\delta + \eps.
\]
In particular, $D^*_\beta \subset E_\beta$ and
\[
\eps t_\beta^2 \le \int_{E_\beta\setminus D^*_\beta} (u^*_\beta)^2 \,dx\le \int_{\Omega\setminus D^*_\beta} (u^*_\beta)^2\,dx \le \frac{1}{\beta},
\]
which forces $t_\beta \to 0 $ as $\beta\to\infty$. Let $v=(u^*_\beta - t_\beta)^+$. Then $v \in H^1_0(E_\beta,\Omega)$, and
\[
\mu(E_\beta,\Om) \le \frac{\int_{E_\beta}|\nabla v|^2\,dx}{\int_{E_\beta}v^2\,dx} \le \frac{\int_{\Omega}|\nabla u^*_\beta|^2\,dx}{\int_{D^*_\beta}(u^*_\beta - t_\beta)^2\,dx}
= \lambda^*_\beta \frac{\int_{D^*_\beta}(u^*_\beta)^2\,dx - \beta\int_{\Omega\setminus D^*_\beta}(u^*_\beta)^2\,dx}{\int_{D^*_\beta}(u^*_\beta)^2\,dx-2t_\beta\int_{D^*_\beta}u^*_\beta\,dx +  \delta t_\beta^2}.
\]
Then we note, using also H\"older inequality on $\int_{D^*_\beta}u^*_\beta\,dx$, that
\[
\sd(\delta+\eps)\leq \mu(E_{\beta},\Omega)\leq \frac{\od(\beta,\delta)}{1-2t_{\beta}\delta^{1/2}+\delta{t_{\beta}}^{2}}=\frac{\od(\beta,\delta)}{(1-\delta^{1/2}t_\beta)^2},
\]
which immediately yields the conclusion as soon as $\eps>\delta/\beta$ so that $t_{\beta}\leq ( \eps\beta)^{-1/2}$.
\end{proof}
%
%
%%%%%%%%%%%Proof of Theorem {thm:equality}
%
We are now in position to prove Theorem~\ref{thm:equality}.
\begin{proof}[Proof of Theorem \ref{thm:equality}]
The first part of the theorem follows from Lemma \ref{le:lim*}. To prove the second part, we observe that, from  the definition of $\lambda(\beta,D)$, part 2 of Lemma~\ref{le:stime} and Lemma~\ref{le:lim}, keeping in mind also Corollary~\ref{cor:eqmeascap}, we have
\begin{equation}\label{eq:disla}
\begin{split}
\lim_{\beta\to+\infty}\od(\beta,\delta)=&\lim_{\beta\to+\infty}\min\Big\{\lambda_{1}((1+\beta)\ind{D} - \beta):D\subset \Om,\;\mbox{quasi-open, }|D|=\delta\Big\}\\
&\leq \lim_{\beta\to+\infty}\min\Big\{\mu(\omega_D,\Om):D\subset \Om,\;\mbox{quasi-open, }|D|=\delta\Big\}\\
&=\min\Big\{\mu(\omega_D,\Om):D\subset \Om,\;\mbox{quasi-open, }|D|=\delta\Big\}=\sd(\delta).
\end{split}
\end{equation}
On the other hand, let $E^*_n$ be a minimizer associated to the problem $\sd(\delta + \eps_n)$,
with $\eps_n\to 0^+$ as $n\to\infty$. Then, by Lemma \ref{le:lim*},
\[
|E^*_n| = \delta + \eps_n
\qquad\text{and}\qquad
\mu(E^*_n,\Om) \le \liminf_{\beta\to\infty} \od(\beta,\delta).
\]
At this point, having in mind Definition~\ref{de:gammaconv} of weak $\gamma$-convergence, we can use \cite[Proposition 2.3,(3) and Proposition 2.7,(1)]{buve} to infer that $E^*_n$ weakly $\gamma$-converges to some quasi-open set $F$. In turn, \cite[Prop. 3.12]{buve} implies
\[
|F| \le \liminf_n |E^*_n| = \delta,
\qquad
\sd(\delta)\le\mu(F,\Om) \le \liminf_n \mu(E^*_n,\Om) \le \liminf_{\beta\to\infty} \od(\beta,\delta)
\]
(recall Remark \ref{rem:minoreouguale}). This shows the reverse inequality of
\eqref{eq:disla}, concluding  the proof of Theorem~\ref{thm:equality}.
\end{proof}
\begin{proof}[Proof of Corollary \ref{coro:intersectsboundary}]
Let $D^*$ be an optimal set for $\sd(\delta)$, such that $\Hcal^{N-1}(\partial D^*\cap\partial\Omega) > 0$. 
%by Remark~\ref{rem:tocca il bordo}. 
Since $D$ is quasi-open, 
by Lemma~\ref{le:lim} we have that \[
\lim_{\beta\rightarrow \infty}\la(\beta,D)=\mu(\omega_D,\Om).
\]
Moreover, noting that 
$\mathcal H^{N-1}(  
\overline{D}\cap \partial \Om)=\mathcal H^{N-1}(\partial D\cap \partial \Om)=0$ by hypothesis, Lemma~\ref{le:spaceHtilde} forces $\mathcal H^{N-1}(\partial \omega_D\cap \partial \Om)=0$. Thus, our assumptions entail that 
$\mu(\omega_D,\Omega)>\sd(\delta)$. Finally, applying conclusion 2 of Lemma~\ref{le:stime}, we infer that \[
\la(\beta,D^*)\leq\mu(D^*,\Om)=\sd(\delta)<\mu(\omega_D,\Om)=\lim_{\beta\rightarrow \infty} \la(\beta,D),
\] 
which proves the claim.
\end{proof}

\section{Spherical shapes in the spectral drop problem}\label{sec:asymptspecrtaldrop}

\subsection{Relative isoperimetric inequalities and \texorpdfstring{$\alpha$}{alpha}-symmetrization}

In order to provide an estimate from below of $\mu(D,\Omega)$ we will exploit the
$\alpha$-symmetrization on cones, which was introduced in \cite{MR572958} for planar
domains and then extended in \cite{MR876139} to general dimension.

For any $0<\alpha<\omega_N=|B_1|$, let $\Sigma_\alpha$ denote any open cone, with vertex at
the origin, having the property that
\[
|\Sigma_\alpha\cap B_1| = \frac1N \Hcal^{N-1}(\Sigma_\alpha\cap \partial B_1) = \alpha
\]
(while in \cite{MR876139,MR963504} cones having specific shape are chosen, for our
purposes we need no further property).
Then the $\alpha$-symmetrization of a measurable
set $D\subset\R^N$ is defined as
\[
\Ccal_\alpha(D) := \Sigma_\alpha \cap B_{r(\alpha,|D|)},
\]
where $r(\alpha,|D|)$ is such that $|\Ccal_\alpha(D)|=|D|$ (i.e. $r(\alpha,|D|)=(|D|/\alpha)^{1/N}$).
Consequently, for a measurable, non-negative $u:D\to\R$, we define its
$\alpha$-symmetrization $\Ccal_\alpha u : \Ccal_\alpha(D) \to \R$ as
\[
\Ccal_\alpha u(x) := \sup\left\{t:|\{y:u(y)>t\}|>\alpha|x|^N\right\}.
\]
Then $\Ccal_\alpha u$ is radially decreasing in $0<|x|< r(\alpha,|D|)$, and $|\{u>t\}| =
|\{\Ccal_\alpha u>t\}|$ (actually, defining $\Sigma_{\omega_N} = \R^N$, the above
procedure leads to the usual Schwarz symmetrization).

From now on we restrict our attention on quasi-open sets $D$. Our aim is to show that, for a suitable choice of $\alpha$,
\begin{equation}\label{eq:firstalpha}
\mu(D,\Omega) \ge \mu(\Ccal_\alpha (D),\Sigma_\alpha) = \lambda_1^\text{Dir} \cdot
r^{-2}(\alpha,|D|),
\end{equation}
where $\lambda_1^\text{Dir}$ denotes the first eigenvalue
of the Dirichlet Laplacian in $B_1$:
\[
\begin{cases}
-\Delta \varphi = \lambda_1^\text{Dir}\varphi, &\text{in }B_1,\\
\varphi=0, &\text{on }\partial B_1,
\end{cases}
\]
and $\varphi\in H^1_0(B_1)$ is the first Dirichlet eigenfunction. A useful observation for the sequel is that
\begin{equation}\label{eq:half_Rayleigh}
\lambda_1^\text{Dir} = \frac{\int_{B_1^+}
|\nabla\varphi(x)|^2\,dx}{\int_{B_1^+}\varphi^2(x)
\,dx} = \mu(B^+_1,\R^N_+) = \mu(B_1\cap\Sigma,\Sigma),
\end{equation}
for any cone $\Sigma$ having vertex at the center of $B_1$.

The right choice of $\alpha$ in \eqref{eq:firstalpha} will depend on a suitable
isoperimetric constant. This follows closely some ideas in \cite{MR963504}, even though
our situation is
slightly different: while the domains considered in \cite{MR963504} have the boundary
divided in fixed Neumann and Dirichlet parts, here we need to deal with arbitrary
subsets of $\Omega$ of fixed measure.

More precisely, for $0<\delta<|\Omega|$ we define the relative isoperimetric constant inside $\Om\subset\R^N$, with measure constraint $\delta$, as
\begin{equation}\label{eq:isop_const}
K(\Om,\delta):=\inf{\left\{\frac1N \frac{P(D,\Om)}{|D|^{(N-1)/N}} : D\subset\Om,\ |D|\leq \delta\right\}},
\end{equation}
where $P(D,\Om)$ is the De Giorgi perimeter of $D$ relative to $\Omega$:
\[
P(D,\Om):= \sup\left\{\int_D\diverg F:F\in C^\infty_0(\Omega,\R^N),\ |F|
\le1\right\}
\]
(in particular, in the regular case, $P(D,\Om) = \Hcal^{N-1}(\partial D \cap \Omega)$).
Notice that, taking $D=B_\eps(x_0)\subset\Omega$, with $\eps$ small, one easily obtains
\[
0\le K(\Om,\delta) \le \omega_N^{1/N}.
\]
Moreover $K$ is non-increasing with respect to $\delta$.
\begin{lemma}\label{lem:Kincone}
For any cone $\Sigma_\alpha$ and $r>0$,
\[
\frac1N \frac{P(B_r,\Sigma_\alpha)}{|B_r\cap\Sigma_\alpha|^{(N-1)/N}} = \alpha^{1/N}.
\]
Furthermore, if $\Sigma_\alpha$ is convex,
\[
K(\Sigma_\alpha,\delta) = \alpha^{1/N}.
\]
\end{lemma}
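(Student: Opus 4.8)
The plan is to treat the two assertions separately: the first is an explicit computation exploiting the scale invariance of a cone, and the second follows by combining that computation with the sharp relative isoperimetric inequality in convex cones.

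For the first identity, I would use that $\Sigma_\alpha$, being a cone with vertex at the origin, is invariant under dilations. Hence $B_r\cap\Sigma_\alpha = r\,(B_1\cap\Sigma_\alpha)$ and $\partial B_r\cap\Sigma_\alpha = r\,(\partial B_1\cap\Sigma_\alpha)$, so that by the normalization of $\Sigma_\alpha$ one gets $|B_r\cap\Sigma_\alpha| = r^N|B_1\cap\Sigma_\alpha| = \alpha r^N$. For the relative perimeter, recall that $P(\cdot,\Sigma_\alpha)$ depends only on the trace of the characteristic function on the open set $\Sigma_\alpha$, so $P(B_r,\Sigma_\alpha) = P(B_r\cap\Sigma_\alpha,\Sigma_\alpha)$; moreover the reduced boundary of $B_r\cap\Sigma_\alpha$ relative to the \emph{open} set $\Sigma_\alpha$ is exactly the spherical cap $\partial B_r\cap\Sigma_\alpha$, since the lateral part of the boundary of $B_r\cap\Sigma_\alpha$ lies on $\partial\Sigma_\alpha$ and hence does not contribute. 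Thus $P(B_r,\Sigma_\alpha) = \Hcal^{N-1}(\partial B_r\cap\Sigma_\alpha) = r^{N-1}\Hcal^{N-1}(\partial B_1\cap\Sigma_\alpha) = N\alpha\, r^{N-1}$, again by the normalization. Inserting these two identities into the quotient yields
\[
\frac1N\,\frac{P(B_r,\Sigma_\alpha)}{|B_r\cap\Sigma_\alpha|^{(N-1)/N}} = \frac1N\,\frac{N\alpha\, r^{N-1}}{(\alpha r^N)^{(N-1)/N}} = \alpha^{1/N}.
\]

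For the second statement, the upper bound $K(\Sigma_\alpha,\delta)\le\alpha^{1/N}$ is immediate: the set $D = B_\rho\cap\Sigma_\alpha$ with $\rho=(\delta/\alpha)^{1/N}$ satisfies $|D|=\delta$, so it is admissible in \eqref{eq:isop_const}, and by the first part it realizes the value $\alpha^{1/N}$. For the matching lower bound I would invoke the sharp relative isoperimetric inequality in convex cones (Lions--Pacella, see \cite{MR963504}, as well as \cite{cianchi,MR3335407}): for every set of finite perimeter $D\subset\Sigma_\alpha$,
\[
\frac1N\,P(D,\Sigma_\alpha) \ge \alpha^{1/N}\,|D|^{(N-1)/N}.
\]
This gives $\frac1N\,P(D,\Sigma_\alpha)\,|D|^{-(N-1)/N}\ge\alpha^{1/N}$ for every competitor $D$ in \eqref{eq:isop_const}, hence $K(\Sigma_\alpha,\delta)\ge\alpha^{1/N}$, and equality follows.

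The first computation is routine; the only delicate points are the reduction $P(B_r,\Sigma_\alpha)=\Hcal^{N-1}(\partial B_r\cap\Sigma_\alpha)$, i.e. checking that the lateral surface of the cone is not counted in the relative perimeter (a standard reduced-boundary argument, using that $B_r$ has smooth boundary and $\Sigma_\alpha$ is open), and quoting the convex-cone isoperimetric inequality with the correct sharp constant. I do not expect a genuine obstacle here. Note that convexity of $\Sigma_\alpha$ is used only in the lower bound for $K$; the first identity holds for an arbitrary cone.
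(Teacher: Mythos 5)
Your proposal is correct and follows essentially the same route as the paper: the first identity is a direct scaling computation using the normalization $|\Sigma_\alpha\cap B_1| = \frac1N\Hcal^{N-1}(\Sigma_\alpha\cap\partial B_1)=\alpha$, and the second is the sharp relative isoperimetric inequality in convex cones due to Lions and Pacella (which the paper cites as \cite[Theorem 1.1]{MR1000160}; note you attached the name to the label \cite{MR963504}, which in this paper is the $\alpha$-symmetrization reference of Pacella--Tricarico, not the isoperimetric one). Your extra remarks on the reduced boundary and on the upper-bound competitor $B_\rho\cap\Sigma_\alpha$ are sound elaborations of what the paper leaves implicit.
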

\begin{proof}
The first part follows by direct computations. The second one ---which we state
just for the sake of completeness--- is \cite[Theorem 1.1]{MR1000160}.
\end{proof}
Our key result in this setting is the following.
\begin{proposition}\label{prop:isoper_generale}
If $K(\Omega,\delta)$ is defined as in \eqref{eq:isop_const} and $D\subset \Omega$
quasi-open, $|D|\le\delta$, then
\[
\mu(D,\Om)\ge K^2(\Omega,\delta)\lambda_1^\text{Dir} \cdot |D|^{-2/N},
\]
or, equivalently, setting $\alpha = K^N(\Om,\delta)$,
\[
\mu(D,\Om) \ge \mu(\Ccal_\alpha (D),\Sigma_\alpha) = \lambda_1^\text{Dir} \cdot
r^{-2}(\alpha,|D|).
\]
\end{proposition}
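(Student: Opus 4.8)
The plan is to prove the stated inequality $\mu(D,\Om)\ge K^2(\Omega,\delta)\lambda_1^\text{Dir}|D|^{-2/N}$ by comparing the Rayleigh quotient of the first mixed Dirichlet-Neumann eigenfunction $u$ on $D$ with the Rayleigh quotient of its $\alpha$-symmetrization $\Ccal_\alpha u$ on the cone $\Sigma_\alpha$, where $\alpha=K^N(\Om,\delta)$. Concretely, I would set $u\in H^1_0(D,\Om)$ to be the (nonnegative) first eigenfunction normalized in $L^2$, extend it by zero to all of $\Om$, and work with its superlevel sets $\Om_t=\{u>t\}$; note $|\Om_t|\le|D|\le\delta$ for every $t>0$. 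The goal is the two-sided comparison
\[
\int_{\Sigma_\alpha}|\nabla \Ccal_\alpha u|^2\,dx \le \int_\Om|\nabla u|^2\,dx
\qquad\text{and}\qquad
\int_{\Sigma_\alpha}(\Ccal_\alpha u)^2\,dx = \int_\Om u^2\,dx,
\]
since then $\Ccal_\alpha u\in H^1_0(\Ccal_\alpha(D),\Sigma_\alpha)$ is admissible for $\mu(\Ccal_\alpha(D),\Sigma_\alpha)$ and the chain
$\mu(D,\Om)=\int|\nabla u|^2 \ge \int|\nabla \Ccal_\alpha u|^2 / \int (\Ccal_\alpha u)^2 \ge \mu(\Ccal_\alpha(D),\Sigma_\alpha)$
closes the argument; the final equality $\mu(\Ccal_\alpha(D),\Sigma_\alpha)=\lambda_1^\text{Dir}r^{-2}(\alpha,|D|)$ is just scaling together with \eqref{eq:half_Rayleigh} and $r(\alpha,|D|)=(|D|/\alpha)^{1/N}$.

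The equimeasurability of $\Ccal_\alpha u$ with $u$ is built into the definition of the $\alpha$-symmetrization, so the $L^2$ identity is immediate. The heart of the matter is the Dirichlet-energy inequality, which I would obtain by the standard coarea-plus-isoperimetric route. Writing $g(t)=|\Om_t|$ and using the coarea formula, $-g'(t)=\int_{\{u=t\}}|\nabla u|^{-1}\,d\Hcal^{N-1}$ for a.e. $t$, while $-\frac{d}{dt}\int_{\Om_t}|\nabla u|^2=\int_{\{u=t\}}|\nabla u|\,d\Hcal^{N-1}$; by Cauchy–Schwarz,
\[
\left(P(\Om_t,\Om)\right)^2=\left(\int_{\{u=t\}}d\Hcal^{N-1}\right)^2 \le \left(-g'(t)\right)\left(-\frac{d}{dt}\int_{\Om_t}|\nabla u|^2\right).
\]
Now the relative isoperimetric inequality inside $\Om$, i.e. $P(\Om_t,\Om)\ge N K(\Om,\delta)|\Om_t|^{(N-1)/N}$ (valid since $|\Om_t|\le\delta$), replaces $P(\Om_t,\Om)$ by $N\alpha^{1/N}g(t)^{(N-1)/N}$; performing the same computation for the radially decreasing $\Ccal_\alpha u$ in the cone $\Sigma_\alpha$, where the level sets are spherical caps and Cauchy–Schwarz is an equality and Lemma \ref{lem:Kincone} gives $P(B_r,\Sigma_\alpha)=N\alpha^{1/N}|B_r\cap\Sigma_\alpha|^{(N-1)/N}$ exactly, shows $-\frac{d}{dt}\int_{(\Ccal_\alpha u)_t}|\nabla \Ccal_\alpha u|^2 \le -\frac{d}{dt}\int_{\Om_t}|\nabla u|^2$ for a.e. $t$ (both having the same $g(t)$). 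Integrating in $t$ from $0$ to $\max u$ yields the energy inequality.

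The step I expect to be the main obstacle is the rigorous justification of the differentiation-under-the-integral and coarea identities for a merely $H^1_0(D,\Om)$ function $u$ with no a priori regularity of $D$ or of $u$ near $\partial D\cap\Om$: one must argue that $t\mapsto\int_{\Om_t}|\nabla u|^2$ is absolutely continuous, handle the set of critical values via Sard-type/coarea arguments, and be careful that superlevel sets of the zero-extended $u$ are the right sets relative to $\Om$ so that the \emph{relative} isoperimetric inequality \eqref{eq:isop_const} applies with constant $K(\Om,\delta)$ rather than an absolute constant. A secondary technical point is confirming that $\Ccal_\alpha u$ genuinely lies in $H^1_0(\Ccal_\alpha(D),\Sigma_\alpha)$ — this follows because $\Ccal_\alpha u$ is supported in $\overline{\Sigma_\alpha\cap B_{r(\alpha,|D|)}}$ and vanishes on $\partial B_{r(\alpha,|D|)}\cap\Sigma_\alpha$, while on $\partial\Sigma_\alpha$ (the Neumann part) no boundary condition is needed — but it should be stated cleanly. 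Everything else is routine once the coarea bookkeeping is in place, and I would lean on the cited references \cite{MR876139,MR963504} for the classical parts of this machinery.
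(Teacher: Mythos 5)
Your proposal is correct and follows essentially the same route as the paper: symmetrize the first eigenfunction by $\alpha$-symmetrization with $\alpha=K^N(\Om,\delta)$, keep the $L^2$ norm by equimeasurability, and compare Dirichlet energies via coarea, Cauchy--Schwarz on level sets, and the relative isoperimetric inequality (with Lemma \ref{lem:Kincone} giving equality for the symmetrized function on the cone). The paper presents the comparison as a chain of integral inequalities rather than differentiating the level-set energies as you do, but the content is identical, and it likewise defers the measure-theoretic justification of the coarea manipulations to the cited symmetrization literature.
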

Notice that, while $\Ccal_\alpha (D)$ depends on the choice of the cone $\Sigma_\alpha$,  
$\mu(\Ccal_\alpha (D),\Sigma_\alpha)$ only depends on $\alpha$.
\begin{proof}
The proposition is essentially \cite[Proposition 1.2]{MR963504}, see also
\cite[Example~5.3]{buve}. As we mentioned, our situation (and notation) is slightly
different, therefore we provide some details.

Let $D\subset\Omega$ with $|D|=\delta$, and $u$ be the
principal normalized eigenfunction associated to $\mu(D,\Omega)$. Then
$\Ccal_\alpha u \in H^{1}_{0}(\Ccal_\alpha(D),\Sigma_\alpha)$, and
$1=\|u\|_{L^{2}}=\|\Ccal_\alpha u\|_{L^{2}}$. Moreover, $ u $ and
$\Ccal_\alpha  u $ have the same distribution function
\begin{equation}\label{eq:defdistr}
f(t)=\left|\{x\in D:  u (x)>t\}\right|= |\{ x\in\Ccal_\alpha(D):
\Ccal_\alpha{ u }(x)>t\}|.
\end{equation}
Then Lemma \ref{lem:Kincone} implies
\begin{equation}\label{eq:isousata}
P(\{ u >t\},\Omega) = \Hcal^{N-1}(\{ u =t\} ) \geq \Hcal^{N-1}(\{\Ccal_\alpha  u =t\})
=P(\{\Ccal_\alpha u >t\},\Omega).
\end{equation}

On the other hand,   the co-area formula yields the following expressions concerning
the distribution function $f$
\begin{equation}\label{eq:fcoarea}
\begin{split}
f(t) &
=\int_{\{ u >t\}}1\,dx=\int_{t}^{+\infty}\left[
\int_{\{  u =s\} }|\nabla  u |^{-1}d\mathcal H^{N-1}\right]ds,
\\
|f'(t)|&
=\int_{\{ u =t\}} |\nabla  u |^{-1}d\mathcal H^{N-1},\qquad \mbox{for a.e. }t\in \R^+.
\end{split}
\end{equation}
Let us also observe that H\"older inequality implies
\[
\begin{split}
\mathcal H^{N-1}(\{  u =t\})^{2}&=\left(\int_{\{ u =t\}}d\mathcal H^{N-1 }\right)^{2}=\left[\int_{\{ u =t\}}|\nabla  u |
^{1/2}|\nabla  u |^{-1/2}d\mathcal H^{N-1 }\right]^{2}
\\
&\leq \left(\int_{\{ u =t\}}|\nabla  u |d\mathcal H^{N-1 }\right)
\left(\int_{\{ u =t\}}|\nabla  u |^{-1}d\mathcal H^{N-1 }\right)
= \left(\int_{\{ u =t\}}|\nabla  u |d\mathcal H^{N-1 }\right)
|f'(t)|.
\end{split}
\]
Exploiting this estimate, together with the co-area formula for $ u $, one has
\[
\begin{split}
\mu(D,\Omega)&=\int_{\Omega}|\nabla u|^{2}\,dx=\int_{0}^{+\infty}\left[\int_{\{ u =t\}}|\nabla  u |d\mathcal H^{N-1 }\right]dt
%\\
%&
%\geq
%\int_{0}^{+\infty}\left[\mathcal H^{N-1}(\{  u =t\})^{2}\left(\int_{\{ u =t\}}|\nabla  u |^{-1}d\mathcal H^{N-1 }\right)^{-1}\right]dt
\\
&\geq\int_{0}^{+\infty}\left[\mathcal H^{N-1}(\{  u =t\})^{2}|f'(t)|^{-1}\right]dt
\geq
\int_{0}^{+\infty}\left[\mathcal H^{N-1}(\{\Ccal_\alpha{  u }=t\})^{2}|f'(t)|^{-1}\right]dt,
\end{split}
\]
where in the last passage we used the isoperimetric inequality \eqref{eq:isousata}. 
Taking into account \eqref{eq:defdistr}, that
$|\nabla \Ccal_\alpha{ u }|$ is constant on the level sets 
and applying the co-area formula again, we can carry on the above estimate writing
\[
\begin{split}
\mu(D,\Omega)&%=\int_{\Omega}|\nabla u|^{2}
\geq 
%\int_{0}^{+\infty}%\left[
%\mathcal H^{N-1}(\{\Ccal_\alpha{  u }=t\})^{2}|f'(t)|^{-1}
%%\right]
%\,dt
%\\
%&=
\int_{0}^{+\infty }
%\left[
\left(\int_{\{\Ccal_\alpha{  u }=t\}} |\nabla \Ccal_\alpha{  u }|^{-1}d\mathcal
H^{N-1} \right)^{-1}\mathcal H^{N-1}(\{\Ccal_\alpha{  u }=t\})^{2}
%\right]
dt
\\
&=%\int_{0}^{+\infty}|\nabla \Ccal_\alpha{  u }|
%\mathcal H^{1}  (\{\Ccal_\alpha{  u }=t\} )dt=
\int_{0}^{+\infty}\left[\int_{\{\Ccal_\alpha{  u }=t\}}|\nabla \Ccal_\alpha{  u }|d\mathcal H^{N-1}\right]dt
%\\
%&
=\int_{\Sigma_\alpha}|\nabla \Ccal_\alpha{  u }|^{2}\,dx
\geq \mu(\Ccal_\alpha (D),\Sigma_\alpha),
\end{split}
\]
and the proposition follows.
\end{proof}
The above Proposition and  Lemma \ref{lem:Kincone} yield the following result.
\begin{corollary}\label{coro:cone_for_K}
Assume that, for some $\bar\delta$, there exists a cone $\Sigma_\alpha$, with
$\alpha = K^N(\Omega,\bar\delta)$, such that
\[
\Omega \cap B_{r(\alpha,\bar\delta)} = \Sigma_\alpha \cap B_{r(\alpha,\bar\delta)}.
\]
Then, for every $\delta\le\bar\delta$, we have $K(\Omega,\delta) = K(\Omega,\bar\delta)$,
\[
\sd(\delta) = K^2(\Omega,\delta)\lambda_1^\text{Dir} \cdot
\delta^{-2/N},
\]
and both $K(\Omega,\delta)$ and $\sd(\delta)$ are achieved by $D^*=B_{r(\alpha,\delta)}\cap\Omega$.
\end{corollary}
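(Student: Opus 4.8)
The plan is to prove the three assertions in sequence, using the ball--cone intersection $D^*=B_{r(\alpha,\delta)}\cap\Omega$ as the universal competitor. First I would record two elementary facts: since $\delta\le\bar\delta$ we have $r(\alpha,\delta)\le r(\alpha,\bar\delta)$, so the coincidence hypothesis gives $D^*=B_{r(\alpha,\delta)}\cap\Sigma_\alpha$, and by the normalization of the cone, $|D^*|=|B_{r(\alpha,\delta)}\cap\Sigma_\alpha|=\alpha\,r(\alpha,\delta)^N=\delta$.

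\emph{Step one: the isoperimetric constant.} By the monotonicity of $K$ in the volume constraint, $K(\Omega,\delta)\ge K(\Omega,\bar\delta)=\alpha^{1/N}$. For the reverse inequality I would test \eqref{eq:isop_const} with $D^*$. Its topological boundary splits into the spherical cap $\partial B_{r(\alpha,\delta)}\cap\Sigma_\alpha$, contained in $B_{r(\alpha,\bar\delta)}\cap\Sigma_\alpha=B_{r(\alpha,\bar\delta)}\cap\Omega\subset\Omega$, and the conical part $\overline{B_{r(\alpha,\delta)}}\cap\partial\Sigma_\alpha$, which by the coincidence hypothesis lies on $\partial\Omega$ and therefore does not contribute to the relative perimeter. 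Hence $P(D^*,\Omega)=\Hcal^{N-1}(\partial B_{r(\alpha,\delta)}\cap\Sigma_\alpha)=P(B_{r(\alpha,\delta)},\Sigma_\alpha)$, and Lemma~\ref{lem:Kincone} yields $\tfrac1N P(D^*,\Omega)/|D^*|^{(N-1)/N}=\alpha^{1/N}$. So $K(\Omega,\delta)=\alpha^{1/N}=K(\Omega,\bar\delta)$, with the infimum attained by $D^*$.

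\emph{Step two: the two-sided bound on $\sd(\delta)$.} The lower bound is immediate from Proposition~\ref{prop:isoper_generale} and Remark~\ref{rem:minoreouguale}: for every quasi-open $D$ with $|D|\le\delta$ one has $\mu(D,\Omega)\ge K^2(\Omega,\delta)\lambda_1^\text{Dir}|D|^{-2/N}\ge K^2(\Omega,\delta)\lambda_1^\text{Dir}\delta^{-2/N}$, and I would take the infimum over $D$. For the matching upper bound I would compute $\mu(D^*,\Omega)$: because $D^*$ is concentrated in the region where $\Omega$ coincides with $\Sigma_\alpha$, the mixed problem defining $\mu(D^*,\Omega)$ has its Dirichlet part on $\partial B_{r(\alpha,\delta)}\cap\Sigma_\alpha\subset\Omega$ and its Neumann part on $\partial\Sigma_\alpha\cap B_{r(\alpha,\delta)}=\partial\Omega\cap B_{r(\alpha,\delta)}$, so $H^1_0(D^*,\Omega)$ is identified with $H^1_0(D^*,\Sigma_\alpha)$ and $\mu(D^*,\Omega)=\mu(B_{r(\alpha,\delta)}\cap\Sigma_\alpha,\Sigma_\alpha)$. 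By the scaling $x\mapsto x/r(\alpha,\delta)$ and \eqref{eq:half_Rayleigh} this equals $\lambda_1^\text{Dir} r(\alpha,\delta)^{-2}=\lambda_1^\text{Dir}\alpha^{2/N}\delta^{-2/N}=K^2(\Omega,\delta)\lambda_1^\text{Dir}\delta^{-2/N}$ by step one. Hence $\sd(\delta)\le\mu(D^*,\Omega)$ closes the chain, giving $\sd(\delta)=K^2(\Omega,\delta)\lambda_1^\text{Dir}\delta^{-2/N}$ with $D^*$ optimal.

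The step I expect to require the most care is the identification $\mu(D^*,\Omega)=\mu(D^*,\Sigma_\alpha)$: one has to check that the capacitary subtleties in the definition of $H^1_0(D^*,\Omega)$ are insensitive to the shape of $\Omega$ away from $D^*$. When $\delta<\bar\delta$ this is clean, since $\overline{D^*}\subset B_{r(\alpha,\bar\delta)}$ and the two domains agree on a full neighborhood of $\overline{D^*}$; for $\delta=\bar\delta$ I would either approximate from below (using monotonicity and lower semicontinuity of $\mu$ under $\gamma$-convergence) or observe that the only part of $\partial D^*$ reaching $\partial B_{r(\alpha,\bar\delta)}$ carries a Dirichlet condition and is therefore harmless, with an analogous and easier remark for the perimeter. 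Apart from this, everything reduces to bookkeeping of scalings.
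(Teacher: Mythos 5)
Your proof is correct and fills in exactly the argument the paper intends (the paper leaves the proof implicit, describing the corollary as a direct consequence of Proposition~\ref{prop:isoper_generale} and Lemma~\ref{lem:Kincone}): test $D^*$ in the definitions of $K$ and of $\sd$, use the cone coincidence to compute its relative perimeter and Rayleigh quotient, and match against the lower bound from Proposition~\ref{prop:isoper_generale}. The only remark worth making is that the capacitary identification of $H^1_0(D^*,\Omega)$ with $H^1_0(D^*,\Sigma_\alpha)$ that you single out as delicate can be bypassed entirely, even at $\delta=\bar\delta$, by simply testing with the radial function $\varphi(\cdot/r(\alpha,\delta))$ extended by zero, which lies in $H^1_0(D^*,\Omega)$ and already yields $\mu(D^*,\Omega)\le\lambda_1^{\text{Dir}}\,r(\alpha,\delta)^{-2}$ via \eqref{eq:half_Rayleigh}.
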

In order to complete the proof of Theorem  \ref{thm:main_polythope}, the last ingredient we miss is the explicit
evaluation of $K(\Omega,\delta)$ in case $\Omega$ is a planar rectangle, via the characterization of optimal sets. This is well-known in the
literature and we refer for example to~\cite{cianchi} for more details.
\begin{lemma}\label{lem:relisopineq}
Let $\Omega = (0,L_1) \times (0,L_2)$, with $L_1\le L_2$. Then
\begin{equation}\label{eq:rii}
K^2\left(\Omega,\frac{L_1^2}{\pi}\right) = \frac{\pi}{4}
\end{equation}
holds, and an optimal set is given by the quarter of disk centered at a vertex of
$\Omega$.
\end{lemma}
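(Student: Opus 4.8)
The goal is to evaluate $K(\Omega,\delta)$ for the rectangle $\Omega=(0,L_1)\times(0,L_2)$ at the specific threshold $\delta = L_1^2/\pi$. The plan is to exploit the known classification of optimal sets for the relative isoperimetric problem in a rectangle (or more generally in a convex planar domain), as developed in \cite{cianchi}: for small volume constraints, the relative isoperimetric inequality in a convex domain is saturated precisely by circular arcs cut off by a vertex of smallest angle, or by a half-disk sitting on a flat side, etc., the winner being determined by which configuration minimizes the ratio $P(D,\Omega)/|D|^{(N-1)/N}$. For the rectangle, the candidates are: a quarter-disk at a corner (angle $\pi/2$), and a half-disk against an edge (angle $\pi$). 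First I would compute the isoperimetric ratio $\frac1N P(D,\Omega)/|D|^{(N-1)/N} = \frac12 P(D,\Omega)/|D|^{1/2}$ for each: by Lemma \ref{lem:Kincone}, a circular sector cut by a cone of opening $\alpha$ (so $|B_1\cap\Sigma_\alpha|=\alpha$) gives ratio $\alpha^{1/N} = (\alpha/\pi)^{1/2}\cdot\pi^{1/2}$... more concretely, the quarter-disk corresponds to $\alpha = \pi/4$ (a quarter of the unit disk has area $\pi/4$), yielding ratio $\alpha^{1/2} = \sqrt{\pi}/2$, while the half-disk corresponds to $\alpha=\pi/2$, yielding ratio $\sqrt{\pi/2} > \sqrt{\pi}/2$. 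Hence the quarter-disk beats the half-disk, and this gives $K^2(\Omega,\delta) = \pi/4$ provided $\delta$ is small enough that the quarter-disk of volume $\delta$ actually fits inside $\Omega$ and that no ``large volume'' competitor (e.g.\ a strip spanning the short side, whose perimeter is $L_1$ while its area can be made comparable to $\delta$) does better.

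The key point is therefore the threshold computation: the quarter-disk centered at a vertex with volume exactly $\delta = L_1^2/\pi$ has radius $r$ with $\frac14\pi r^2 = L_1^2/\pi$, i.e.\ $r = 2L_1/\pi \cdot \sqrt{?}$; let me instead just say $\pi r^2/4 = L_1^2/\pi$ gives $r^2 = 4L_1^2/\pi^2$, so $r = 2L_1/\pi < L_1 \le L_2$, hence the quarter-disk fits in $\Omega$ without touching the far sides. This is precisely the geometric content needed to invoke Corollary \ref{coro:cone_for_K}: with $\alpha = \pi/4 = K^N(\Omega,\bar\delta)$ and $\bar\delta = L_1^2/\pi$, one checks $\Omega\cap B_{r(\alpha,\bar\delta)} = \Sigma_{\pi/4}\cap B_{r(\alpha,\bar\delta)}$, where $\Sigma_{\pi/4}$ is the quarter-plane cone at the vertex. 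I would then cite \cite{cianchi} for the fact that, among \emph{all} competitors $D\subset\Omega$ with $|D|\le L_1^2/\pi$, the quarter-disk is the minimizer of the relative isoperimetric ratio --- this is the crucial input that rules out elongated competitors and confirms that $K(\Omega, L_1^2/\pi)$ is genuinely attained by the quarter-disk and equals $(\pi/4)^{1/2}$, i.e.\ \eqref{eq:rii} holds.

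The main obstacle is not any computation but the invocation of the sharp relative isoperimetric inequality in a rectangle at this precise volume threshold: one must be certain that for all $\delta \le L_1^2/\pi$ the minimizing configuration is the corner quarter-disk rather than a half-disk on an edge or a ``slab'' configuration touching two opposite short sides. The value $L_1^2/\pi$ is exactly the largest volume for which the quarter-disk of that volume has radius $\le L_1$, hence the last volume at which the quarter-disk is still a legitimate (cone-like) configuration inside $\Omega$; beyond it the quarter-disk would hit the opposite long sides and the isoperimetric minimizer changes character. So the argument is: (i) recall from \cite{cianchi} the classification showing the quarter-disk wins for $\delta \le L_1^2/\pi$; (ii) compute its isoperimetric ratio via Lemma \ref{lem:Kincone} with $\alpha = \pi/4$, obtaining $K^2 = \pi/4$; (iii) observe $r(\pi/4, L_1^2/\pi) = 2L_1/\pi < L_1$ so the quarter-disk fits, making the hypothesis of Corollary \ref{coro:cone_for_K} verifiable. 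Combined with Corollary \ref{coro:cone_for_K}, this then yields $\bar\delta \ge L_1^2/\pi$ and the quarter-disk optimality for $\sd(\delta)$ for all $\delta < L_1^2/\pi$, completing Theorem \ref{thm:main_polythope}.
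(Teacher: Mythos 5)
Your outline correctly identifies the candidate optimal configurations in the rectangle (quarter-disk at a corner, half-disk on an edge, strip) and correctly rules out the half-disk against the quarter-disk via Lemma \ref{lem:Kincone}. However there is a genuine gap: you never carry out the comparison with the strip competitor, which is precisely the computation that produces the threshold $L_1^2/\pi$, and the geometric explanation you substitute for it is wrong. You assert that $L_1^2/\pi$ is ``exactly the largest volume for which the quarter-disk of that volume has radius $\le L_1$,'' i.e.\ the last volume at which the quarter-disk fits inside $\Omega$. This is false: a quarter-disk of area $\delta$ has radius $r=2\sqrt{\delta/\pi}$, so it fits in $\Omega$ for all $\delta \le \pi L_1^2/4 \approx 0.785\,L_1^2$, roughly two and a half times larger than $L_1^2/\pi \approx 0.318\,L_1^2$. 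Your own computation even shows that at $\delta=L_1^2/\pi$ the radius is $2L_1/\pi$, strictly less than $L_1$; the threshold is not a fitting constraint.

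The missing step, and the actual origin of $L_1^2/\pi$, is the comparison of isoperimetric ratios between the quarter-disk $D_\delta$ and the strip $S_\delta = (0,L_1)\times(0,\delta/L_1)$: the quarter-disk has $\frac14 P^2(D_\delta,\Omega)/|D_\delta| = \pi/4$ for every $\delta$ (as you computed via Lemma \ref{lem:Kincone}), while the strip has $P(S_\delta,\Omega)=L_1$, hence $\frac14 P^2(S_\delta,\Omega)/|S_\delta| = L_1^2/(4\delta)$. The quarter-disk wins if and only if $\pi/4 \le L_1^2/(4\delta)$, i.e.\ $\delta \le L_1^2/\pi$; beyond that the strip becomes more efficient, so $K(\Omega,\delta)$ drops below $\sqrt{\pi}/2$. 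The classification from \cite{cianchi} that you invoke supplies only the list of possible optimal configurations (circular arc or straight segment, boundary meeting $\partial\Omega$ orthogonally at flat points), not which configuration wins at a given volume; that comparison must be done explicitly, and omitting it leaves the threshold undetermined. A further, minor point: invoking Corollary \ref{coro:cone_for_K} inside the proof of Lemma \ref{lem:relisopineq} is circular in spirit, since that corollary presupposes knowledge of $K(\Omega,\bar\delta)$ --- it serves to pass from $K$ to $\sd(\delta)$ in the proof of Theorem \ref{thm:main_polythope}, not to compute $K$.
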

\begin{proof}
For $0<\delta<|\Om|$, let us define
\[
C(\Omega,\delta) := \inf{\left\{\frac14 \frac{P^2(D,\Om)}{\delta} : D\subset\Om,\ |D|= \delta\right\}}.
\]
Then
\[
K^2(\Omega,\bar\delta) = \inf_{0<\delta\le\bar\delta} C(\Omega,\delta),
\]
and we want to find the optimal set for $K^2$ in case $\bar\delta=L_1^2/\pi$.
In general, $K$ needs not to be achieved. On the other hand,
according to~\cite[Thm.~2 and Thm.~3]{cianchi} (see also
\cite[Thm.~4.6 and Thm.~5.12]{MR3335407}), $C(\Omega,\delta)$ is achieved by an
open, connected $E^*_\delta\subset\Omega$, such that $\partial E^*_\delta\cap \Om$ is
either an arc of circle or a straight line. Moreover, the Hausdorff measure of the
intersection of the boundaries satisfies $\mathcal H^1(\partial E^*_\delta\cap \partial
\Om)>0$, and $\partial E^*_\delta\cap \Om$ reaches the boundary of $\Om$ orthogonally
at flat points (i.e. not at a vertex). Finally, since $\delta\le\bar\delta<|\Om|/2$,
then $E^*_\delta$ is convex. Hence, there are four possible configurations for
$E^*_\delta$:
\begin{enumerate}
\item
$E^*_\delta$ is a half disk, centered at a flat point of $\partial\Om$;
\item
$E^*_\delta$ is a quarter of a disk $D_\delta$, centered at a vertex of $\partial\Om$;
\item
$E^*_\delta$ is a portion of a disk, with boundary either passing through two
vertices, or passing through one vertex and orthogonal to one side of
$\partial\Om$ (i.e., having endpoints on opposite sides of $\Om$);
\item
$E^*_\delta$ is (congruent to) the strip $S_\delta=(0,L_1)\times(0,\delta/L_1)$ (being $(0,\delta/L_2)\times(0,L_2)$ less convenient for $C$).
\end{enumerate}
It is easy to rule out configurations 1, which is always worse than 2
(because the perimeter of a half disk is bigger than that of a quarter of disk having
the same measure) and 3 in favor of 4 (because the perimeter of
such an $E^*_\delta$ is always bigger than $L_1$, and a strip with the same measure
and perimeter $L_1$ always exists). With respect to the alternatives 2 and 4, explicit computations show that
\[
\delta\le\frac{L_1^2}{\pi}
\qquad\implies\qquad
\frac14\frac{P^2(D_\delta,\Om)}{|D_\delta|}=\frac\pi4\le
\frac{L_1^2}{4\delta}=\frac14\frac{P^2(S_\delta,\Om)}{|S_\delta|},
\]
and the lemma follows.
\end{proof}
\begin{remark}\label{rem:bardelta}
Notice that finding explicit lower bounds of $\bar\delta$ for an $N$-dimensional orthotope,
even for $N=3$, is much more difficult: indeed in such case $C(\Om,\delta)$ is achieved
by a set having relative boundary with constant mean curvature, and therefore the cases 
to consider not only include planes, cylinders and spheres, but also other candidates
such as the Lawson surfaces and the Schwarz ones (see the survey
\cite{MR2167260} for more details).

On the other hand, in case $\Omega=(0,L_1)\times(0,L_2)$, the above isoperimetric
estimate is sharp: if $L_1^2/\pi<\delta\le L_1L_2/2$, then $K(\delta,\Om)$ is achieved
by the strip $S_\delta$. However, the situation for $\sd(\delta)$ may be different:
for instance, if $L_1=L_2$, then $\mu(S_\delta,\Om)>\mu(D_\delta,\Om)$ up to
$\delta\leq 1/2$, as one can verify by direct calculation. On the other hand if $L_1<<
L_2$, then the strip becomes an optimal set  also for $\sd(\delta)$,
$\delta \approx 1/2$, as one can see with the same argument
of~\cite[Example~5.4]{buve}.
\end{remark}
\begin{proof}[Proof of Theorem \ref{thm:main_polythope}]
The proof of the first part of the theorem follows by Corollary \ref{coro:cone_for_K}
and by \cite[Thm. 6.8]{MR3335407}, which implies that, for small volumes, the isoperimetric
regions in a convex polytope $\Omega$ are geodesic balls centered at vertices with the smallest solid
angle (recall also Lemma \ref{lem:Kincone}). The second part of the theorem is a consequence of the first part, and of Theorem \ref{thm:equality}. Finally, the estimate of $\bar\delta$ in case $\Omega$ is a planar rectangle follows from Lemma~\ref{lem:relisopineq}.
\end{proof}
\begin{proof}[Proof of Theorem \ref{thm:main_small} -- estimate from below]
In order to prove this estimate, we will combine Proposition \ref{prop:isoper_generale} with the asymptotic expansion of the relative isoperimetric profile obtained by Fall in~\cite{fall},
in the setting of Riemannian manifolds. More precisely, for $v>0$, the \emph{isoperimetric profile} relative to $\Om$ is the mapping
\[
v\mapsto I_\Om(v):=\min{\left\{P(D,\Om):D\subset\Om,\ |D|=v\right\}},
\]
and we define
\[
\hat{H}:=\max_{p\in \partial \Om}H(p) ,\qquad \beta_{N-1}:=\frac{N-1}{N(N+1)}\left(\frac{2}{\omega_N}\right)^{(N+1)/N}\omega_{N-1}.
\]
Then \cite[Corollary~1.3]{fall} yields
\begin{equation}\label{eq:fall}
I_\Om(v)= I_{\R^N_+}(v)\left(1-\beta_{N-1}\hat{H}\,v^{1/N}+O(v^{2/N})\right),\qquad \text{as }v\rightarrow 0.
\end{equation}
Since the half ball of volume $v$ has radius $r(v)=(2v/\omega_N)^{1/N}$, we infer that
\[
I_{\R^N_+}(v)=P(B^+_{r(v)},\R^N_+)=\frac{N\omega_N}{2}\left(\frac{2v}{\omega_N}\right)^{(N-1)/N}
=N\left(\frac{\omega_N}2\right)^{1/N}v^{(N-1)/{N}}.
\]
As a consequence
\[
K(\Om,\delta)=\inf_{0<v\leq \delta}\frac{I_\Om(v)}{Nv^{(N-1)/N}}=\left(\frac{\omega_N}2\right)^{1/N}\inf_{0<v\leq \delta}\left(1-\beta_{N-1}\hat H\,v^{1/N}+O(v^{2/N})\right),
\]
and finally
\[
K(\Om,\delta)=\left(\frac{\omega_N}2\right)^{1/N}\left(1-\beta_{N-1}\hat H\,\delta^{1/N}+o(\delta^{1/N})\right),\qquad\text{as }\delta\to0.
\]
Then Proposition \ref{prop:isoper_generale} applies, providing
\[
\inf_{|D|=\delta}\mu(D,\Om)\ge  \mu(B^+_1,\R^N_+)|B_1^+| ^{2/N}  \cdot \delta^{-2/N} \left( 1 -  \underline{C}_N \hat H\,\delta^{1/N} + o(\delta^{1/N})\right)\qquad \text{as }\delta\to0^+,
\]
where
\[
\underline{C}_N = 2\beta_{N-1} = 4 \frac{|B_1^+|^{-1/N}}{N} \frac{N-1}{N+1} \frac{\omega_{N-1}}{\omega_N}.\qedhere
\]
\end{proof}

\subsection{Spectral drops in regular domains -- asymptotic spherical shapes}

Recall that $\varphi\in H^1_0(B_1)$ denotes the first eigenfunction
of the Dirichlet Laplacian on $B_1$, with eigenvalue $\lambda_1^\text{Dir}$, see equation \eqref{eq:half_Rayleigh}.
We will show the following.
\begin{proposition}\label{prop:asymptotic_in_r}
Let $x_0\in\partial\Omega$ be such that $\partial\Omega$ is of class $C^2$ near $x_0$. For $r>0$ small, let $D_r = B_r(x_0) \cap \Omega$. Then
\[
\begin{split}
|D_r| 
%&= |B_1^+| \, r^N - \frac12 \frac{N-1}{N+1} \omega_{N-1} H(x_0)\, r^{N+1}
%+ o(r^{N+1})\smallskip\\
&= |B_1^+| \, r^N \cdot \left( 1 -  \frac{N-1}{N+1} \frac{\omega_{N-1}}{\omega_N}
H(x_0)\, r + o(r)\right),\smallskip\\
\mu(D_r,\Omega) 
%&\le \mu(B^+_1,\R^N_+)\, r^{-2} - \frac{N-1}{4}\frac{\int_{|x'|<1}
%\varphi^2(x',0)\,dx'}{\int_{B_1^+}\varphi^2\,dx}  H(x_0)\, r^{-1} + o(r^{-1})
%\smallskip\\
&\le \mu(B^+_1,\R^N_+)\, r^{-2} \cdot \left( 1 -
\frac{N-1}{4}\frac{\int_{|x'|<1}\varphi^2(x',0)\,dx'}{\int_{B_1^+}
|\nabla\varphi|^2\,dx}  H(x_0)\, r + o(r)\right),
\end{split}
\]
as $r\to0^+$, where $H(x_0)$ denotes the mean curvature of $\partial\Omega$ at $x_0$.
\end{proposition}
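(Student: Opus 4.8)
The plan is to reduce the whole statement to a local computation near $x_0$ and a first‑order comparison with the flat half‑ball $B_1^+$, using the radial first eigenfunction $\varphi$ itself as a test function. First I would place $x_0$ at the origin, with inner unit normal $e_N$, so that near the origin $\partial\Omega=\{x_N=\psi(x')\}$ and $\Omega=\{x_N>\psi(x')\}$ for a $C^2$ function $\psi$ with $\psi(0)=0$, $\nabla\psi(0)=0$ and $\operatorname{tr}D^2\psi(0)=(N-1)H(x_0)$; Taylor's formula then gives $\psi(x')=\tfrac12\langle D^2\psi(0)x',x'\rangle+o(|x'|^2)$. After the dilation $x=ry$ one has $D_r=r\,\Omega_r$ with $\Omega_r=B_1\cap\{y_N>\psi_r(y')\}$ and $\psi_r(y'):=\psi(ry')/r=\tfrac r2\langle D^2\psi(0)y',y'\rangle+o(r)$, uniformly for $|y'|\le1$; thus $\Omega_r$ is an $O(r)$ perturbation of $B_1^+$, differing from it only in the thin ``floor layer'' lying over the disk $\{|y'|<1\}$. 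The elementary facts I need about this geometry are the slicing estimate
\[
\int_{\Omega_r}g(y)\,dy=\int_{B_1^+}g(y)\,dy-\int_{\{|y'|<1\}}\psi_r(y')\,g(y',0)\,dy'+O(r^2),
\]
valid for $g$ of class $C^1$ in a neighbourhood of $\{y_N=0\}$, together with the rotational average $\int_{\{|y'|<1\}}\langle My',y'\rangle\,dy'=\tfrac{\operatorname{tr}M}{N-1}\int_{\{|y'|<1\}}|y'|^2\,dy'$ and $\int_{\{|y'|<1\}}|y'|^2\,dy'=\tfrac{N-1}{N+1}\,\omega_{N-1}$.

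Applying the slicing estimate with $g\equiv1$ gives $|D_r|=r^N|\Omega_r|=r^N\big(|B_1^+|-\tfrac r2H(x_0)\tfrac{N-1}{N+1}\omega_{N-1}+o(r)\big)$, and factoring out $|B_1^+|=\omega_N/2$ yields the first expansion. For the eigenvalue, scaling the Rayleigh quotient gives $\mu(D_r,\Omega)=r^{-2}\mu(\Omega_r,\widetilde\Omega_r)$ with $\widetilde\Omega_r=r^{-1}\Omega$; since $\Omega_r\subset B_1$ and the Dirichlet part $\partial\Omega_r\cap\widetilde\Omega_r$ lies on $\{|y|=1\}$, the Lipschitz function $v(y):=\varphi(\min(|y|,1))$ (recall $\varphi\in C^\infty(\overline{B_1})$) belongs to $H^1_0(\Omega_r,\widetilde\Omega_r)$ and is admissible. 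Bounding $\mu(\Omega_r,\widetilde\Omega_r)$ by the Rayleigh quotient of $v$ and expanding its numerator $\int_{\Omega_r}|\varphi'(|y|)|^2\,dy$ and denominator $\int_{\Omega_r}\varphi(|y|)^2\,dy$ with the slicing estimate (recall $|\nabla\varphi(|y|)|^2=|\varphi'(|y|)|^2$ and $\varphi(|y|)=\varphi(|y'|)$ on $\{y_N=0\}$), I would obtain, writing $A=\int_{B_1^+}|\nabla\varphi|^2$, $B=\int_{B_1^+}\varphi^2$ and $\mu_0=A/B=\mu(B_1^+,\R^N_+)$,
\[
\mu(D_r,\Omega)\le\mu_0\,r^{-2}\Big(1-\tfrac r2\,H(x_0)\,A^{-1}\Big(\int_{\{|y'|<1\}}|y'|^2|\varphi'(|y'|)|^2\,dy'-\mu_0\int_{\{|y'|<1\}}|y'|^2\varphi(|y'|)^2\,dy'\Big)+o(r)\Big).
\]

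It then remains to identify the bracket. Passing to polar coordinates in $\R^{N-1}$, i.e. $\int_{\{|y'|<1\}}h(|y'|)\,dy'=(N-1)\omega_{N-1}\int_0^1 h(\rho)\rho^{N-2}\,d\rho$, reduces the bracket to the one‑dimensional identity
\[
\int_0^1\rho^N\big(|\varphi'(\rho)|^2-\mu_0\varphi(\rho)^2\big)\,d\rho=\frac{N-1}{2}\int_0^1\rho^{N-2}\varphi(\rho)^2\,d\rho,
\]
which I would prove by a Pohozaev‑type manipulation: multiplying the radial equation $(\rho^{N-1}\varphi')'=-\mu_0\rho^{N-1}\varphi$ by $\rho\varphi$ and integrating by parts twice on $(0,1)$, all boundary contributions vanishing because $\varphi(1)=0$ and $N\ge2$. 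Hence the bracket equals $\tfrac{N-1}{2}\int_{\{|y'|<1\}}\varphi(|y'|)^2\,dy'=\tfrac{N-1}{2}\int_{\{|x'|<1\}}\varphi^2(x',0)\,dx'$, and substituting this into the displayed inequality gives precisely the coefficient $\tfrac{N-1}{4}\big(\int_{\{|x'|<1\}}\varphi^2(x',0)\,dx'\big)\big/\int_{B_1^+}|\nabla\varphi|^2$ claimed for $\mu(D_r,\Omega)$.

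The conceptual content here is light; the genuine work is the bookkeeping of the $O(r^2)$ error in the slicing estimate, uniformly in $y'$ — in particular checking that the region near the equator $|y|=1$, where the spherical parts of $\partial\Omega_r$ and $\partial B_1^+$ also disagree by $O(r)$, contributes only $o(r)$, which holds since there $\varphi$ and $\varphi'$ are bounded while $\varphi$ vanishes. A minor point is that $v$ must vanish \emph{quasi}‑everywhere, not merely almost everywhere, on the Dirichlet part of $\partial\Omega_r$, which is immediate because $v$ is Lipschitz.
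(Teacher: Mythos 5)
Your argument is correct and follows essentially the same route as the paper: write $\Omega$ locally as an epigraph of $\psi$, rescale by $r$, test the Rayleigh quotient with the (rescaled) radial Dirichlet eigenfunction $\varphi$, reduce the first-order correction to an integral over the equatorial disk using the rotational average of $D^2\psi(0)$, and identify the coefficient via the same Pohozaev-type radial identity $\int_0^1\rho^N(\varphi_\rho^2-\lambda_1^{\text{Dir}}\varphi^2)\,d\rho=\tfrac{N-1}{2}\int_0^1\rho^{N-2}\varphi^2\,d\rho$. The only difference is one of packaging: the paper organizes the first-order expansion as the computation of a derivative $h'(0^+)$ of an integral functional (Lemmas \ref{lem:der_H_nosymm}--\ref{lem:der_H_symm}), applied in turn to the volume and to the full Rayleigh quotient $R(r)$, whereas you state an equivalent ``slicing estimate'' and expand numerator and denominator separately before dividing.
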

To prove the proposition, w.l.o.g. we choose $x_0=0$ and
$\psi\in C^2(B_1\cap\{x_N=0\})$, with $\psi(0) = 0$,
$\nabla \psi(0) = 0$, in such a way that $\Omega$ is (locally) the epigraph of $\psi$.
Then, for $r$ sufficiently small,
\[
D_r = B_r(0) \cap \Omega =B_r(0) \cap\left\{x:x_N>\psi(x')\right\} .
\]
We need some preliminary lemmas.
\begin{lemma}\label{lem:der_H_nosymm}
Let $f\in C(\overline{B}_1)$ and
$\psi$ as above. For $r\in(0,1)$ let
\[
h(r) := \int_{B_1 \cap \{x_N>\psi(rx')/r\}} f(x',x_N)\,dx.
\]
Then
\[
h'(0^+) = - \frac12 \int_{\{|x'|<1\}} f(x',0) D^2\psi(0)x'\cdot x'\,dx'.
\]
\end{lemma}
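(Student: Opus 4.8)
The plan is to slice the integral defining $h(r)$ in the $x_N$ direction and then differentiate under the integral sign by dominated convergence. First I would record the elementary consequences of $\psi$ being $C^2$ near the origin with $\psi(0)=0$ and $\nabla\psi(0)=0$: for $r<1/2$ and $|x'|<1$ the point $rx'$ lies in $\overline{B_{1/2}}\cap\{x_N=0\}$, so that
\[
\left|\frac{\psi(rx')}{r}\right|\le \frac{r}{2}\,\|D^2\psi\|_{L^\infty(\overline{B_{1/2}})}\,|x'|^2\le C\,r,
\qquad
\frac{\psi(rx')}{r^2}\to\frac12\,D^2\psi(0)x'\cdot x'\quad\text{as }r\to0^+,
\]
the second relation being the pointwise second–order Taylor expansion at $0$. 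In particular $\psi(rx')/r\to0$ uniformly in $|x'|<1$, so $h$ extends continuously to $r=0$ with $h(0)=\int_{B_1^+}f$.

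Next I would rewrite $h$ via Fubini. Setting $s(x')=\sqrt{1-|x'|^2}$ and, for $|x'|<1$ and $t\in\R$,
\[
F(x',t):=\int_{\min\{\max\{t,-s(x')\},\,s(x')\}}^{\,s(x')} f(x',\sigma)\,d\sigma,
\]
a slicing of $\{x\in B_1:x_N>\psi(rx')/r\}$ at each height $x'$ gives $h(r)=\int_{\{|x'|<1\}}F\bigl(x',\psi(rx')/r\bigr)\,dx'$, and likewise $h(0)=\int_{\{|x'|<1\}}F(x',0)\,dx'$. The truncation in the definition of $F$ is exactly what lets these identities hold without any case distinction near $|x'|=1$; moreover $t\mapsto F(x',t)$ is globally Lipschitz with constant $\|f\|_{L^\infty}$, and $\partial_t F(x',t)=-f(x',t)$ whenever $|t|<s(x')$, so $F(x',\cdot)$ is $C^1$ in a neighbourhood of $t=0$ for every $|x'|<1$.

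It then remains to pass to the limit in
\[
\frac{h(r)-h(0)}{r}=\int_{\{|x'|<1\}}\frac{F\bigl(x',\psi(rx')/r\bigr)-F(x',0)}{r}\,dx'.
\]
For the pointwise limit of the integrand, the differentiability of $F(x',\cdot)$ at $0$ together with $\psi(rx')/r\to0$ and the Taylor estimate above give
\[
\frac{F\bigl(x',\psi(rx')/r\bigr)-F(x',0)}{r}=\partial_t F(x',0)\,\frac{\psi(rx')}{r^2}+o(1)\ \longrightarrow\ -\tfrac12\,f(x',0)\,D^2\psi(0)x'\cdot x'\quad\text{as }r\to0^+.
\]
For the domination, the Lipschitz bound on $F$ and $|\psi(rx')/r|\le Cr$ yield
\[
\left|\frac{F\bigl(x',\psi(rx')/r\bigr)-F(x',0)}{r}\right|\le \|f\|_{L^\infty}\,\frac{|\psi(rx')|}{r^2}\le C\,\|f\|_{L^\infty}
\qquad\text{for all }r<\tfrac12,\ |x'|<1,
\]
which is a constant, hence integrable on the bounded set $\{|x'|<1\}$. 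Dominated convergence then gives $h'(0^+)=\lim_{r\to0^+}(h(r)-h(0))/r=-\frac12\int_{\{|x'|<1\}}f(x',0)\,D^2\psi(0)x'\cdot x'\,dx'$, as claimed.

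The only slightly delicate point is making sure the $B_1$-constraint does not interfere with the slicing for $|x'|$ close to $1$; as indicated, this is absorbed into the truncation in the definition of $F$ (equivalently, one may observe that the symmetric difference of $\{x\in B_1:x_N>\psi(rx')/r\}$ and $B_1^+$ is contained in the slab $\{x\in B_1:|x_N|\le Cr\}$, which contributes $O(r)$ to $h(r)-h(0)$, with density $-\tfrac12 f(x',0)D^2\psi(0)x'\cdot x'$ in the limit). All the remaining steps are routine.
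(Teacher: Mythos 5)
Your proof is correct, and it takes a route that differs from the paper's in two places worth noting. The paper first \emph{extends} $f$ below the lower cap $\{x_N=-\sqrt{1-|x'|^2}\}$ by its boundary value, then writes $h(r)=I_1(r)-I_2(r)$, where $I_1$ uses $\psi(rx')/r$ as an unconstrained lower limit of integration over each fiber and $I_2$ is the correction near $|x'|=1$; it then shows $I_2(r)=o(r)$ separately and computes $I_1'(0^+)$ by differentiating under the integral sign at $r>0$ and passing to the limit. You instead build the truncation into the antiderivative $F(x',t)$ from the start, which makes the identity $h(r)=\int_{\{|x'|<1\}}F(x',\psi(rx')/r)\,dx'$ hold \emph{exactly} for every $|x'|<1$, with no extension of $f$ and no correction term; the boundary effect near $|x'|=1$ is then handled automatically by the global Lipschitz bound $|F(x',t_1)-F(x',t_2)|\le\|f\|_\infty|t_1-t_2|$, and you take the difference quotient directly by dominated convergence rather than a Leibniz differentiation at $r>0$. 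The net effect is the same asymptotics, but your version avoids the explicit $o(r)$ estimate for the boundary region (and the small bookkeeping issue in the paper's decomposition for those rare $x'$ near $|x'|=1$ where $\psi(rx')/r$ may exceed $\sqrt{1-|x'|^2}$). The pointwise limit and the domination are both carried out correctly, and the Taylor expansion $\psi(rx')/r^2\to\frac12 D^2\psi(0)x'\cdot x'$ is used exactly where it should be.
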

\begin{proof}
To start with, we extend $f$ to $\{x_N<-\sqrt{1-|x'|^2}\}$ by setting
\[
f(x',x_N) = f(x', -\sqrt{1-|x'|^2})\qquad\text{whenever }x_N \le -\sqrt{1-|x'|^2}.
\]
Then $f$ is continuous and bounded in $\{x:|x'|<1,\,x_N \le \sqrt{1-|x'|^2}\}$.
Moreover
\[
h(r) = \int_{\{|x'|<1\}}dx' \int_{\psi(rx')/r}^{\sqrt{1-|x'|^2}} f\,dx_N -
\int_{\{\psi(rx')/r<-\sqrt{1-|x'|^2}\}}dx' \int_{\psi(rx')/r}^{-\sqrt{1-|x'|^2}}
f\,dx_N = I_1(r) - I_2(r).
\]
We first show that $I_2(r) = o(r)$ as $r\to0^+$. Indeed, by assumption there exists
$\kappa\ge 0$ such that $\psi(x') \ge -\kappa |x'|^2$. Thus
\[
\Big\{x':\psi(rx')/r<-\sqrt{1-|x'|^2}\Big\} \subset
\Big\{x':-\kappa r |x'|^2<-\sqrt{1-|x'|^2}\Big\} \subset
\Big\{x':1 -\kappa^2 r^2 < |x'|^2 < 1\Big\}
\]
and
\[
|I_2(r)| \le \|f\|_\infty \int_{\{1 -\kappa^2 r^2 < |x'|^2 < 1\}}dx'
\int_{-\kappa r}^{0} \,dx_N = \|f\|_\infty \kappa r \int_{\{1 -\kappa^2 r^2 < |x'|^2 < 1\}}dx' = o(r).
\]
As a consequence
\[
h'(0^+) = I_1'(0^+) = \lim_{r\to0^+} \int_{\{|x'|<1\}}-f\left(x',\frac{\psi(rx')}{r}\right) \,
\frac{\nabla \psi(rx')\cdot rx' - \psi(rx')}{r^2} \,dx',
\]
and the lemma follows, as $\psi(rx') = r^2 D^2\psi(0)x'\cdot x'/2 + o(r^2)$,
as $r\to0^+$, uniformly in $|x'|\le1$.
\end{proof}
The mean curvature of the graph of $\psi$ at $0$ appears in the above estimate, in case $f$ is
symmetric.
\begin{lemma}\label{lem:der_H_symm}
Under the assumptions of Lemma \ref{lem:der_H_nosymm}, assume furthermore that
$f(x',0)$ is radially symmetric in $x'$. Then
\[
h'(0^+) = - \frac{H(0)}{2} \int_{\{|x'|<1\}} f(x',0) |x'|^2\,dx',
\]
where $H(0)=\trace (D^2\psi(0))/(N-1)$ is the mean curvature of the graph of
$\psi$ at $0$.
\end{lemma}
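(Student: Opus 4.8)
The plan is to deduce this statement directly from Lemma \ref{lem:der_H_nosymm} by an angular averaging argument, the point being that radial symmetry of $f(x',0)$ turns the quadratic form $D^2\psi(0)x'\cdot x'$ into a multiple of $|x'|^2$. By Lemma \ref{lem:der_H_nosymm} we already have
\[
h'(0^+) = -\frac12 \int_{\{|x'|<1\}} f(x',0)\, D^2\psi(0)x'\cdot x'\,dx',
\]
so it only remains to evaluate this integral when $f(x',0)=g(|x'|)$ for some $g\in C([0,1])$, with $A:=D^2\psi(0)$ a symmetric $(N-1)\times(N-1)$ matrix.

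First I would pass to polar coordinates in $\R^{N-1}$, writing $x'=\rho\theta$ with $\rho\in(0,1)$, $\theta\in\mathbb{S}^{N-2}$, so that $Ax'\cdot x' = \rho^2\,(A\theta\cdot\theta)$ and $dx' = \rho^{N-2}\,d\rho\,d\mathcal{H}^{N-2}(\theta)$. This factors the integral as
\[
\int_{\{|x'|<1\}} g(|x'|)\, A x'\cdot x'\,dx' = \left(\int_0^1 g(\rho)\rho^{N}\,d\rho\right)\left(\int_{\mathbb{S}^{N-2}} A\theta\cdot\theta\,d\mathcal{H}^{N-2}(\theta)\right),
\]
and the same decomposition applied to $\int_{\{|x'|<1\}} g(|x'|)|x'|^2\,dx'$ produces the identical radial factor $\int_0^1 g(\rho)\rho^N\,d\rho$ together with $\mathcal{H}^{N-2}(\mathbb{S}^{N-2})$.

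Second, I would compute the angular integral using the rotational invariance of $\mathbb{S}^{N-2}$: since $\int_{\mathbb{S}^{N-2}}\theta_i\theta_j\,d\mathcal{H}^{N-2} = \frac{\delta_{ij}}{N-1}\,\mathcal{H}^{N-2}(\mathbb{S}^{N-2})$, contracting against the entries of $A$ gives $\int_{\mathbb{S}^{N-2}} A\theta\cdot\theta\,d\mathcal{H}^{N-2} = \frac{\trace A}{N-1}\,\mathcal{H}^{N-2}(\mathbb{S}^{N-2})$. Dividing the two factorizations, the radial factor and $\mathcal{H}^{N-2}(\mathbb{S}^{N-2})$ cancel, so
\[
\int_{\{|x'|<1\}} g(|x'|)\, A x'\cdot x'\,dx' = \frac{\trace A}{N-1}\int_{\{|x'|<1\}} g(|x'|)\,|x'|^2\,dx'.
\]
Substituting this into the formula from Lemma \ref{lem:der_H_nosymm} and recalling that $H(0)=\trace(D^2\psi(0))/(N-1)$ yields the claim.

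There is essentially no serious obstacle here: all the delicate analysis (differentiating $h$, discarding the boundary correction term $I_2(r)=o(r)$, and Taylor expanding $\psi$ at $0$) has already been carried out in Lemma \ref{lem:der_H_nosymm}, and what is left is the elementary identity that the spherical average of $\theta\mapsto A\theta\cdot\theta$ equals $\trace A/(N-1)$. The only point deserving a word of care is that $g$ is merely continuous, not smooth or bounded away from zero; but since $f\in C(\overline B_1)$ the radial integral $\int_0^1 g(\rho)\rho^N\,d\rho$ converges absolutely, so the factorization and the cancellation above are legitimate.
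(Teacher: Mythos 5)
Your proof is correct and follows essentially the same route as the paper's: both deduce the statement directly from Lemma~\ref{lem:der_H_nosymm} and then exploit the radial symmetry of $f(x',0)$ to replace the quadratic form $D^2\psi(0)x'\cdot x'$ by $\frac{\trace D^2\psi(0)}{N-1}|x'|^2$ under the integral. The only cosmetic difference is that the paper first diagonalizes $D^2\psi(0)$ by a rotation (harmless since $f$ is radial) and then equates the moments $\int f\,x_i^2\,dx'$, whereas you keep the full quadratic form and invoke the spherical average identity $\int_{\mathbb{S}^{N-2}}\theta_i\theta_j\,d\mathcal H^{N-2}=\frac{\delta_{ij}}{N-1}\mathcal H^{N-2}(\mathbb{S}^{N-2})$ directly; these are two implementations of the same symmetry argument.
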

\begin{proof}
Up to a rotation we have
\[
\psi(x') = \frac12 \sum_{i=1}^{N-1}\kappa_i x_i^2 + o (|x'|^2)
\]
as $x'\to0$. On the other hand, since $f$ is symmetric,
\[
\int_{\{|x'|<1\}} f(x',0) x_i^2\,dx' = \int_{\{|x'|<1\}} f(x',0) x_j^2\,dx'
= \frac{1}{N-1} \int_{\{|x'|<1\}} f(x',0) |x'|^2\,dx',
\]
and Lemma \ref{lem:der_H_nosymm} yields
\[
h'(0^+) = - \frac12 \int_{\{|x'|<1\}} f(x',0) \sum_{i=1}^{N-1}\kappa_i x_i^2\,dx'
=- \frac{\sum_{i=1}^{N-1}\kappa_i}{2(N-1)} \int_{\{|x'|<1\}} f(x',0)  |x'|^2\,dx'.
\qedhere
\]
\end{proof}
Using the above result we can readily estimate $|D_r|$.
\begin{lemma}\label{lem:misura_di_Dr}
Under the above notations,
\[
|D_r| = \frac12 \omega_N \, r^N - \frac12 \frac{N-1}{N+1} \omega_{N-1} H(0)\, r^{N+1}
+ o(r^{N+1}).
\]
\end{lemma}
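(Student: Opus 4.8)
The strategy is to apply Lemma \ref{lem:der_H_symm} with a well-chosen function $f$, after rescaling $D_r$ to the fixed ball $B_1$. First I would perform the change of variables $x \mapsto rx$, which maps $D_r = B_r(0)\cap\{x_N > \psi(x')\}$ to $B_1(0)\cap\{x_N > \psi(rx')/r\}$, so that
\[
|D_r| = r^N \left| B_1 \cap \{x_N > \psi(rx')/r\}\right| = r^N h(r),
\]
where $h(r) := \int_{B_1 \cap \{x_N > \psi(rx')/r\}} 1\,dx$ is exactly the quantity in Lemma \ref{lem:der_H_nosymm} with $f\equiv 1$.

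Next I would Taylor-expand $h(r)$ around $r=0$. Since $f\equiv 1$ is trivially radially symmetric, Lemma \ref{lem:der_H_symm} applies and gives
\[
h(0) = |B_1^+| = \tfrac12\omega_N, \qquad
h'(0^+) = -\frac{H(0)}{2}\int_{\{|x'|<1\}} |x'|^2\,dx'.
\]
The remaining task is the elementary computation of $\int_{\{|x'|<1\}}|x'|^2\,dx'$ over the $(N-1)$-dimensional unit ball: integrating in polar coordinates in $\R^{N-1}$ gives $\int_0^1 \rho^2 \cdot (N-1)\omega_{N-1}\rho^{N-2}\,d\rho = (N-1)\omega_{N-1}/(N+1)$. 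Hence $h'(0^+) = -\frac{1}{2}\frac{N-1}{N+1}\omega_{N-1} H(0)$, and multiplying by $r^N$ and writing $h(r) = h(0) + h'(0^+) r + o(r)$ yields
\[
|D_r| = r^N\left(\tfrac12\omega_N - \tfrac12\tfrac{N-1}{N+1}\omega_{N-1} H(0)\, r + o(r)\right) = \tfrac12\omega_N r^N - \tfrac12\tfrac{N-1}{N+1}\omega_{N-1} H(0)\, r^{N+1} + o(r^{N+1}),
\]
which is the claim.

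There is no serious obstacle here: the content of the lemma is entirely absorbed into Lemma \ref{lem:der_H_symm}, which was proved just above. The only points requiring a modicum of care are (i) checking that the differentiation-under-the-integral / dominated-convergence step producing $h'(0^+)$ is legitimate when $f\equiv 1$ — but this is a special case of what was already established — and (ii) the bookkeeping of constants in the explicit integral $\int_{\{|x'|<1\}}|x'|^2\,dx'$, which I would carry out in polar coordinates as indicated. I would also note in passing that this expansion is consistent with the isoperimetric expansion \eqref{eq:fall}: both have leading term coming from the half-ball and a first correction proportional to the mean curvature $H(0)$ at the base point.
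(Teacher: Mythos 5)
Your proof is correct and is essentially identical to the paper's: rescale by $r$, recognize $r^{-N}|D_r|$ as the quantity $h(r)$ from Lemma \ref{lem:der_H_nosymm} with $f\equiv 1$, then apply Lemma \ref{lem:der_H_symm} and evaluate $\int_{\{|x'|<1\}}|x'|^2\,dx' = (N-1)\omega_{N-1}/(N+1)$ in polar coordinates. The paper does exactly this, and your constant bookkeeping matches.
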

\begin{proof}
Writing $y=rx$ we have
\[
r^{-N} |D_r| = r^{-N} \int_{B_r \cap \Omega} \,dy =  \int_{B_1 \cap \{x_N>\psi(rx')/r\}} \,dx = h(r),
\]
where $h$ is defined as in the above lemmas, with $f\equiv 1$. Then
\[
h(0^+) = \int_{B_1 \cap \{x_N>0\}} \,dx = \frac12 \omega_N
\]
and, by Lemma \ref{lem:der_H_symm},
\[
h'(0^+) = - \frac{H(0)}{2} \int_{\{|x'|<1\}} |x'|^2\,dx' = - \frac{H(0)}{2}
\int_0^1 t^N
(N-1)\omega_{N-1} \,dt =- \frac12 \frac{N-1}{N+1} \omega_{N-1} H(0). \qedhere
\]
\end{proof}
We will estimate $\mu(D_r,\Omega)$ with the Rayleigh quotient of a rescaling
of the Dirichlet eigenfunction $\varphi$. Let $\tilde \varphi (y) := \varphi (y/r) \in H^1_0 (B_r)$. Then
\begin{equation}\label{eq:half_Rayleigh2}
\mu(D_r,\Omega)\le \frac{\int_{D_r}|\nabla\tilde\varphi(y)|^2\,
dy}{\int_{D_r}\tilde\varphi^2(y)\,dy} = \frac{r^{-2}
\int_{B_1 \cap \{x_N>\psi(rx')/r\}}
|\nabla\varphi(x)|^2\,dx}{\int_{B_1 \cap \{x_N>\psi(rx')/r\}}\varphi^2(x)
\,dx}=:r^{-2}R(r).
\end{equation}
\begin{proof}[End of the proof of Proposition \ref{prop:asymptotic_in_r}]
By \eqref{eq:half_Rayleigh} we have that $R(0^+) = \mu(B^+_1,\R^N_+)$.
In view of Lemma \ref{lem:misura_di_Dr} and equation \eqref{eq:half_Rayleigh2}
we only need to evaluate $R'(0^+)$. Using repeatedly Lemma
\ref{lem:der_H_symm} we obtain
\[
\begin{split}
R'(0^+) &=  - \frac{ H(0) }{2} \frac{
\int_{B_1^+}\varphi^2 \cdot %(x)\,dx
\int_{\{|x'|<1\}} |\nabla\varphi(x',0)|^2 |x'|^2%\,dx'
- \int_{B_1^+}|\nabla\varphi|^2 \cdot %\,dx
\int_{\{|x'|<1\}} \varphi^2(x',0) |x'|^2%\,dx'
}{
\left(
\int_{B_1^+}\varphi^2%(x)\,dx
\right)^2}\\
&=  - \frac{ H(0) }{2\int_{B_1^+}\varphi^2%(x)\,dx
} \cdot
\int_{\{|x'|<1\}} \left[|\nabla\varphi(x',0)|^2 -
\lambda_1^\text{Dir} \varphi^2(x',0)\right]|x'|^2\,dx'.
\end{split}
\]
Recalling that $\varphi$ is radial,
with some abuse of notation we write, for $\rho=|x|$, $\varphi(x) = \varphi(\rho)$
and $|\nabla \varphi(x)| = - \varphi_\rho(\rho)$. This yields
\[
\begin{split}
-\frac{2\int_{B_1^+}\varphi^2}{ H(0) } R'(0^+) &=
\int_0^1 \left[\varphi_\rho^2 -
\lambda_1^\text{Dir} \varphi^2\right]\rho^2\cdot (N-1)\omega_{N-1}\rho^{N-2}\,d\rho\\
&=
\int_0^1 \left[\rho^N\varphi_\rho^2 + \rho^N\varphi\left(\varphi_{\rho\rho} + \frac{N-1}{\rho}
\varphi_\rho\right)\right]\cdot (N-1)\omega_{N-1}\,d\rho
\\
&=
\int_0^1 \left[\left(\rho^N\varphi\varphi_\rho\right)'
-\frac12(\varphi^2)'\rho^{N-1} \right]\cdot (N-1)\omega_{N-1}\,d\rho,
\end{split}
\]
and finally, integrating by parts,
\[
%\begin{split}
-\frac{2\int_{B_1^+}\varphi^2}{ H(0) } R'(0^+) %&
=   \frac{ N-1 }{2}
\int_0^1 \varphi^2 \rho^{N-2} \cdot (N-1)\omega_{N-1}\,d\rho%\\&
=  \frac{ N-1 }{2}\int_{\{|x'|<1\}}\varphi^2(x',0)\,dx'.\qedhere
%\end{split}
\]
\end{proof}
The last ingredient we need to conclude the proof of Theorem \ref{thm:main_small} is the following elementary lemma.
\begin{lemma}\label{lem:elementary_asymptotics}
Assume that, for positive constants $a,b,c,d$,
\[
\delta = a r^N \left( 1 - br + o(r)\right),\qquad
\mu = c r^{-2} \left( 1 - dr + o(r)\right),\qquad \text{as }r\to0^+.
\]
Then
\[
\mu = c a^{2/N}  \delta^{-2/N} \left( 1 -  \frac{a^{-1/N} (2b + Nd)}{N} \,\delta^{1/N} + o(\delta^{1/N})\right)\qquad \text{as }\delta\to0^+.
\]
\end{lemma}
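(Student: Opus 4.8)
The plan is to prove Lemma \ref{lem:elementary_asymptotics} by straightforward asymptotic algebra: invert the first relation to express $r$ in terms of $\delta^{1/N}$, then substitute into the second. First I would rewrite $\delta = ar^N(1-br+o(r))$ as $\delta^{1/N} = a^{1/N} r (1-br+o(r))^{1/N}$ and use the expansion $(1-br+o(r))^{1/N} = 1 - \frac{b}{N}r + o(r)$ to obtain
\[
\delta^{1/N} = a^{1/N} r \left(1 - \frac{b}{N}r + o(r)\right) = a^{1/N} r + o(r),
\]
so that $r = a^{-1/N}\delta^{1/N} + o(\delta^{1/N})$ to leading order. Feeding this back into the more precise relation gives the refined inversion
\[
r = a^{-1/N}\delta^{1/N}\left(1 + \frac{b}{N}a^{-1/N}\delta^{1/N} + o(\delta^{1/N})\right).
\]

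Next I would compute $r^{-2}$ from this:
\[
r^{-2} = a^{2/N}\delta^{-2/N}\left(1 - \frac{2b}{N}a^{-1/N}\delta^{1/N} + o(\delta^{1/N})\right),
\]
again using $(1+x)^{-2} = 1 - 2x + o(x)$. Similarly, since $r = a^{-1/N}\delta^{1/N} + o(\delta^{1/N})$, the factor $1-dr+o(r)$ becomes $1 - d a^{-1/N}\delta^{1/N} + o(\delta^{1/N})$. Multiplying,
\[
\mu = c r^{-2}(1-dr+o(r)) = c a^{2/N}\delta^{-2/N}\left(1 - \frac{2b}{N}a^{-1/N}\delta^{1/N} - d a^{-1/N}\delta^{1/N} + o(\delta^{1/N})\right),
\]
and collecting the $\delta^{1/N}$ coefficient as $-\frac{a^{-1/N}(2b+Nd)}{N}$ yields exactly the claimed formula.

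There is no real obstacle here — the only point requiring a little care is bookkeeping of the $o(r)$ versus $o(\delta^{1/N})$ terms, which are interchangeable precisely because $r \asymp \delta^{1/N}$ as $\delta\to 0^+$ (both $a$ and the implied constants are positive, so $r\to 0$ iff $\delta\to 0$ and the ratio $r/\delta^{1/N}$ is bounded away from $0$ and $\infty$). Since we only track first-order corrections, it suffices to work modulo $o(\delta^{1/N})$ throughout, and every expansion used is the first-order Taylor expansion of $(1+x)^\alpha$ at $x=0$. To conclude the proof of Theorem \ref{thm:main_small}, one then applies this lemma with the values $a = |B_1^+|$, $b = \frac{N-1}{N+1}\frac{\omega_{N-1}}{\omega_N}H(x_0)$ from Lemma \ref{lem:misura_di_Dr}'s normalized form (Proposition \ref{prop:asymptotic_in_r}), $c = \mu(B_1^+,\R^N_+)$, and $d = \frac{N-1}{4}\frac{\int_{|x'|<1}\varphi^2(x',0)\,dx'}{\int_{B_1^+}|\nabla\varphi|^2\,dx}H(x_0)$, giving the upper bound in \eqref{eq:pinch} with $\overline{C}_N = \frac{|B_1^+|^{-1/N}}{N}\left(\frac{2(N-1)}{N+1}\frac{\omega_{N-1}}{\omega_N} + N\cdot\frac{N-1}{4}\frac{\int_{|x'|<1}\varphi^2(x',0)\,dx'}{\int_{B_1^+}|\nabla\varphi|^2\,dx}\right)$ evaluated at the maximizing $x_0$, matched against the lower bound already proved via Proposition \ref{prop:isoper_generale} and Fall's expansion.
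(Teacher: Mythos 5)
Your argument is correct and follows the paper's proof essentially verbatim: invert the expansion of $\delta$ in terms of $r$ using first-order Taylor expansions of $(1+x)^{\alpha}$, then substitute into the expansion for $\mu$ and collect the $\delta^{1/N}$-coefficient. The numerical bookkeeping (the coefficient $-\frac{a^{-1/N}(2b+Nd)}{N}$ and the resulting $\overline{C}_N$) also checks out.
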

\begin{proof}
From the expansion of $\delta$ we have that
\[
\delta^{1/N} = a^{1/N} r \left( 1 - br + o(r)\right)^{1/N} = a^{1/N} r \left( 1 - \frac{b}{N}r + o(r)\right),
\]
which implies
\[
r = a^{-1/N} \delta^{1/N} \left( 1 + \frac{a^{-1/N} b}{N}\, \delta^{1/N} + o(\delta^{1/N})\right),
\]
and the lemma follows.
\end{proof}
\begin{proof}[Proof of Theorem \ref{thm:main_small} -- estimate from above]
In the notation of Proposition \ref{prop:asymptotic_in_r}, let $\delta>0$ be sufficiently small and let $r=r(\delta)$ be such that $|D_r|=\delta$. Then we can apply Lemma
\ref{lem:elementary_asymptotics} to write
\begin{equation}\label{eq:dasoprax0}
\mu(D_{r(\delta)},\Omega) \le \mu(B^+_1,\R^N_+)|B_1^+| ^{2/N}  \cdot \delta^{-2/N} \left( 1 -  \overline{C}_N H(x_0)\,\delta^{1/N} + o(\delta^{1/N})\right)\qquad \text{as }\delta\to0^+,
\end{equation}
where
\[
\overline{C}_N = \frac{|B_1^+|^{-1/N}}{N} \left(2\frac{N-1}{N+1} \frac{\omega_{N-1}}{\omega_N}
+ N \frac{N-1}{4}\frac{\int_{|x'|<1}\varphi^2(x',0)\,dx'}{\int_{B_1^+}
|\nabla\varphi|^2\,dx} \right).\qedhere
\]
\end{proof}
\begin{proof}[Proof of Corollary~\ref{cor:post_small}]
Recalling that $\la(\beta, B_{r(\delta)}(x_0))\leq \mu(B_{r(\delta)}(x_0),\Om)$ (see Lemma~\ref{le:stime}), Theorem~\ref{thm:equality} yields
\[
1\leq \frac{\la(\beta, B_{r(\delta)}(x_0))}{\od(\beta,\delta)}\leq \frac{\mu(B_{r(\delta)}(x_0),\Om)}{\sd(\delta+\eps)(1-\sqrt{\delta/(\eps\beta)})^2},
\]
for all $\eps\in (\delta/\beta, |\Om|-\delta)$. Theorem~\ref{thm:main_small} allows to estimate the above term, obtaining
\[
\sd(\delta+\eps)\ge \mu(B^+_1,\R^N_+)|B_1^+| ^{2/N} \left[ (\delta+\eps)^{-2/N} -\underline{C}_N\hat H\cdot(\delta+\eps)^{-1/N}+o((\delta+\eps)^{-1/N})\right],
\]
\[
\mu(B_{r(\delta)}(x_0),\Om)\le \mu(B^+_1,\R^N_+)|B_1^+| ^{2/N} \left[   \delta^{-2/N} -\overline{C}_N H(x_0)\cdot\delta^{-1/N}+o(\delta^{-1/N})\right]
\]
as $\eps,\delta\to0$ (see \eqref{eq:dasoprax0}). Plugging this information into the above ratio we obtain
\[
\frac{\la(\beta, B_{r(\delta)}(x_0))}{\od(\beta,\delta)}\leq \frac{\delta^{-2/N}}{(\delta+\eps)^{-2/N}}\Big(1-\sqrt{\delta/(\eps\beta)}\Big)^{-2}\frac{1-\overline{C}_NH(x_0)\delta^{1/N}+o(\delta^{1/N})}{1-\underline{C}_N\hat H(\delta+\eps)^{1/N}+o((\delta+\eps)^{1/N})}.
\]
Finally, letting $\beta\to+\infty$ and choosing $\eps=\delta\beta^{-1/3}>\delta/\beta$, we have
\[
\frac{\la(\beta, B_{r(\delta)}(x_0))}{\od(\beta,\delta)}\leq\Big(1+\frac{2}{N}\beta^{-1/3}+o(\beta^{-1/3})\Big)\Big(1+2\beta^{-1/3}+o(\beta^{-1/3})\Big)\frac{1-\overline{C}_N
H(x_0)\delta^{1/N}+o(\delta^{1/N})}{1-\underline{C}_N\hat H\delta^{1/N}+o(\delta^{1/N})},
\]
which proves the claim with $A = \underline{C}_N\hat H - \overline{C}_N H(x_0)$ and $B = \frac2N + 2$.
\end{proof}

\bibliography{frac_eig}
\bibliographystyle{abbrv}

\end{document}